\title{	Holomorphic functions on complex Banach lattices}
\author{Christopher Boyd, Raymond A. Ryan and Nina Snigireva 
	}
\date{}
\def\cqd{\ \ \ \hbox{}\nolinebreak\hfill $\blacksquare \  \  \
\  $ \par{}\medskip}
\newenvironment{proof}{\noindent{\bf Proof:}}{\hfill \cqd}
        \def\@thefnmark{\null}
        \def\footnotetexta{\@footnotetext}
\newtheorem{proposition}{Proposition}
\newtheorem{theorem}{Theorem}
\newtheorem{lemma}{Lemma}
\newtheorem{definition}{Definition}
\newtheorem{example}{Example}
\newcommand{\rlinn}[3]{\mathcal{L}_r(^{#1}{#2};{#3})}
\newcommand{\rlinns}[3]{\mathcal{L}^s_r(^{#1}{#2};{#3})}
\newcommand{\hpoly}[3]{\mathcal{P}(^{#1} {#2};{#3})}
\newcommand{\hpolys}[2]{\mathcal{P}(^{#1} {#2})}
\newcommand{\thetapoly}[3]{\mathcal{P}_\theta(^{#1} {#2};{#3})}
\newcommand{\rhpoly}[3]{\mathcal{P}_r(^{#1} {#2};{#3})}
\newcommand{\rhpolys}[2]{\mathcal{P}_r(^{#1} {#2})}
\newcommand{\npoly}[2]{\mathcal{P}(^{#1}{#2})}
\newcommand{\rhol}[2]{\mathcal{H}_r({#1};{#2})}
\newcommand{\rbhol}[2]{\mathcal{H}_{rb}({#1};{#2})}
\newcommand{\rhols}[1]{\mathcal{H}_r({#1})}
\newcommand{\rbhols}[1]{\mathcal{H}_{rb}({#1})}
\newcommand{\mdiff}[2]{\frac{1}{#1!} \widehat{d^#1}{#2}}	
\newcommand{\bignorm}[1]{\Bigl\| {#1}\Bigr\|}
\newcommand{\multi}{\N_0^{(\N)}}		
\DeclareMathOperator{\re}{Re}
\DeclareMathOperator{\im}{Im}
\newcommand{\R}{\ensuremath{\mathbb{R}}}
\newcommand{\C}{\ensuremath{\mathbb{C}}}
\newcommand{\N}{\ensuremath{\mathbb{N}}}
\newcommand{\rad}[2]{r(#1,#2)}
\newcommand{\radreg}[2]{|r|(#1,#2)}
\newcommand{\polyd}[2]{D(#1,#2)}
\begin{document}
\renewcommand{\baselinestretch}{1.25}
\maketitle

\begin{center}
	{\it Dedicated to Se\'an Dineen (1944--2024), teacher,
	mentor and friend.}
\end{center}

\begin{abstract}\footnotetexta{{\bf Keywords:} Holomorphic function; Homogeneous polynomial; Banach lattice; Regular polynomial; Bohr radius.}
\footnotetexta{{\bf MSC(2020):} Primary 46G20; 46B42; 32A70; Secondary 46E10; 32A05.}
\noindent
 We introduce and study the algebraic, analytic and lattice properties of regular
homogeneous polynomials and holomorphic functions on complex Banach lattices.  We show that the theory of power series with regular
terms is closer to the theory of functions of several complex variables than
the theory of holomorphic functions on Banach spaces.  We extend the concept of the Bohr
radius to Banach lattices and show that it   
provides us with a
lower bound for the ratio between the radius of regular convergence and the
radius of convergence of a regular holomorphic function. This allows us to
show that in finite dimensions the radius of convergence of the Taylor
series of a holomorphic function coincides with the radius of convergence of its
monomial expansion but that on $\ell_p$ these two radii can be radically
different.
\end{abstract}
 
\section{Introduction}

The aim of this paper is to build a framework
for the study of holomorphic functions on 
complex Banach lattices that takes  account
of the lattice structure of the domain, 
a feature that is 
rarely acknowledged in the literature in this area
at present.  The crucial concept is regularity --- 
we require that the homogeneous polynomials that 
make up the Taylor expansion of a holomorphic function
are regular, meaning that they have a modulus which is
also a homogeneous polynomial, and that the convergence of the series is regular, in a sense that we will define.  This leads to a theory
which is closer in many respects to the classical 
theory for several complex variables.  For example,
it is possible to define the concept of logarithmic
convexity for sets in a complex Banach lattice and
we can show that the domain of convergence of a power
series with regular terms has this property.

To date, regular polynomials on Riesz spaces and Banach lattices have largely
been studied for real spaces. The study of regular holomorphic functions means
that we now have to consider homogeneous polynomial on complex Banach lattices.
A complex Banach lattice is, 
by definition,
 the complexification of a real Banach lattice. Therefore, in Section 2 we will concentrate on the complexification process
and see how regular polynomials on a complex Banach lattice have real and imaginary parts which are the complexifications of regular polynomials on the associated `real part'.
Moreover, the space of regular $m$-homogeneous polynomials on a complex Banach
lattice can be given a norm, known as the regular norm, with respect to which
it becomes a complex Banach lattice.

In Section 3 we will initiate our study of power series of regular polynomials
and introduce the concept of regular convergence. 
We will see that the natural domain of regular convergence of
such a power series is a logarithmically 
convex solid domain. In Section~4 we introduce
regular holomorphic functions as those holomorphic functions
whose derivatives at every point are regular
and whose Taylor series at every point is
regularly convergent in some neighbourhood of the point.  
We shall see that the space of regular holomorphic functions forms a
holomorphy type in the sense of Nachbin, \cite{Nachbin}, and that the theory
of regular holomorphic functions is more closely related to the theory of
several complex variables than it is to the theory of holomorphic functions on
Banach spaces.

With each regular holomorphic function $f$ and each point $a$ in the domain of
$f$ we associate two radii of convergence, the radius of convergence of $f$
itself about $a$ and the radius of regular convergence. In general, the radius of
regular convergence is smaller than the radius of convergence.
In Section~5
we begin by observing how the idea of the homogeneous Bohr radii introduced by
Defant, Garc{\'\i}a and Maestre, \cite{DGM2003}, for homogeneous polynomials on
$\mathbb{C}^k$ can be extended to homogeneous polynomials on general Banach
lattices. From this viewpoint, the Bohr
radius can be seen as a measure of the 
equivalence of the regular and supremum norms.
We will then see how these Bohr radii provide us with a lower bound
for
the ratio between the radius of regular convergence and the radius of 
convergence. This will allow us to show that for finite dimensional spaces the
radii of convergence and regular convergence coincide.

In Section~6 we will show that on $\ell_p$, for $1<p<\infty$, it is possible to
construct regular holomorphic functions with radius of convergence equal to $1$
yet having arbitrary small radius of regular convergence. Finally, in
Section~7 we consider complex 
orthogonally additive polynomials and 
holomorphic functions.  
We prove that for complex orthogonally
additive polynomials the regular and 
supremum norms coincide and we use
this to show that the radius of convergence is equal to  the radius
of regular convergence on the space of orthogonally additive holomorphic
functions.

For further reading on the theory of holomorphic functions on Banach spaces we
refer the reader to \cite{Dineen1} and \cite{Dineen2} while for the theory of Banach lattices we refer the reader to \cite{AB}, \cite{MN}
and \cite{Sch}.

\bigskip

\section{Regular polynomials on complex Banach lattices}

 First we recall the definition of a complex Banach lattice. We denote by $E_\C$ the complexification 
of a real vector space $E$.  Thus,
$E_\C$ is a complex vector space in
which every element $z$ can be 
expressed uniquely in the form
$z=x+iy$, where $x,y\in E$.
The real vectors $x$, $y$ are referred
to as the real and imaginary parts,
respectively, of $z$ and we write
$x=\re z$ and $y=\im z$. If $E$ is a Banach lattice, 
the modulus of $z=x+iy\in E_\C$  is the element 
of $E$ given by
\begin{equation}\label{e:def}
|z| = \sqrt{x^2+ y^2}  = 
\sup\{x\cos\theta + y\sin\theta : 0\le\theta\le 2\pi \},
\end{equation}
where 
these expressions are
 defined using the Krivine 
functional calculus \cite[Section 1.d]{LT}. A norm
is defined on $E_\C$ by
$$
\|z\| = \|\,|z|\,\|
$$ and the triple consisting of $E_\C$ with the modulus
and norm is, by definition, a 
\emph{complex Banach lattice}.  Alternatively, one may
take an axiomatic approach.  This was done by Mittelmeyer
and Wolff \cite{MW} and they showed that every complex
Banach lattice defined in their way is the 
complexification of a (real) Banach lattice as outlined
above.  We  recommend the paper by Buskes and 
Schwanke \cite{BuskesS} for an informative discussion
of the Mittelmeyer-Wolff axioms and a general treatment
of complexification of vector lattices.

Let $E,F$ be (real) Banach lattices.
An $m$-homogeneous polynomial
$P\colon E\to F$ is generated by
a unique symmetric $m$-linear mapping
$A\colon E^m\to F$,
in the sense that $P(x)= A(x,\dots,x)$ 
for every $x\in E$.
We write $P=\widehat{A}$.  The 
polynomial $P$ is said to be
\emph{positive} if 
$A(x_1,\dots,x_m)\ge 0$
for all $x_1,\dots,x_m \ge 0$
and \emph{regular} if it is the 
difference of two positive
polynomials.  If $F$ is Dedekind
complete, then the space 
$\rhpoly{m}{E}{F}$ of 
regular $m$-homogeneous polynomials
is a Banach lattice with the 
\emph{regular norm}, defined by
$\|P\|_r
= \| |P| \|$, where $\|\cdot\|$ is
the supremum norm \cite{BB}.

Let $E_1,\dots,E_m,F$ be real vector
spaces
and let
$A\colon E_1\times \dots \times E_m \to F$
be an $m$-linear mapping.
Then
$A$ 
has a unique extension to a
complex $m$-linear mapping 
$A_\C \colon (E_1)_\C\times \dots \times (E_m)_\C
 \to F_\C $ \cite[Theorem 3]{BS71}.
 For $z_j = x^0_j+ix^1_j \in (E_j)_\C$,
 $1\le j\le m$, we have
 $$
 A_\C (z_1,\dots,z_m)=
 \sum_{\delta_1,\dots,\delta_m=0,1}
 i^{\sum \delta_j}
A(x^{\delta_1}_1, \dots, x^{\delta_m}_m)\,.
$$
We shall say that a complex $m$-linear
mapping from $(E_1)_\C\times \dots (E_m)_\C$ into
$F_\C $ is \textit{real}
if it is the complexification of
a real $m$ linear mapping from
$E_1\times \dots \times E_m$
into $F$.   Every complex $m$-linear
mapping $A$ can be decomposed 
uniquely in the form
$A=A_0 +i A_1$, where
$A_0,A_1$ are real $m$-linear mappings.
We let
$
A_0(x_1,\dots,x_m) = \re A(x_1,\dots,x_m)
$ and $A_1(x_1,\dots,x_m)= \im A(x_1,\dots,x_m)$
for real arguments and then extend $A_0,A_1$
to the complexification as described above.
It follows that the vector space of
complex $m$-linear mappings is the 
complexification of the space of 
real $m$-linear mappings:
$$
L(^m (E_1)_\C, \dots, (E_m)_\C; F_\C) \cong
L(^m E_1,\dots, E_m; F)_\C\,.
$$

If the spaces $E_1,\dots,E_m$ are the same
then clearly an $m$-linear mapping $A$
on $E^m$ is symmetric if and
only if its complexification, $A_\C$,
is.
Thus, if $E$, $F$ are real vector spaces,
then every $m$-homogeneous polynomial
$P= \widehat{A}\colon E\to F$ has a unique
extension to a complex $m$-homogeneous
polynomial $P_\C \colon E_\C \to F_\C$.
Moreover, we have
$$
P_\C(z)=
\sum_{\ell=0}^{\lfloor m/2\rfloor}
\binom{m}{2\ell}(-1)^\ell A(x^{m-2\ell},y^{2\ell})
+i \Bigl(\sum_{\ell=0}^{\lfloor m/2\rfloor} 
\binom{m}{2\ell+1}(-1)^\ell A(x^{m-2\ell-1},y^{2\ell+1})   \Bigr)
$$
for every $z= x+iy\in E_\C$.
As these complexifications are canonical,
we usually omit the subscript $\C$,
so $P(z)$ will be understood to mean 
$P_\C(z)$.
A complex $m$-homogeneous polynomial 
that arises in this way, as the complexification
of a polynomial from $E$ into $F$, will be
referred to as a 
\textit{real polynomial}.  
It follows from our remarks
above that the space of complex
$m$-homogeneous polynomials 
is the complexification of the space 
of real $m$-homogeneous polynomials:
$$
P(^m E_\C; F_\C) \cong
P(^m E;F)_\C\,.
$$ 
Every complex $m$-homogeneous polynomial $P$
can be written uniquely in the form
$P=P_0 +i P_1$, where $P_0,P_1$ are real 
$m$-homogeneous polynomials, defined,
for real arguments by
\begin{equation}\label{eq:RealIm}
P_0(x)= \re P(x) \quad\text{and}\quad
P_1(x) = \im P(x)
\end{equation}
for $x\in E$.  The polynomials $P_0, P_1$ are then extended 
by complexification to all of 
$E_\C$.  We note that it is not
correct so say that 
$P_0(z) = \re P(z)$ for 
complex vectors $z$;
the identity $(\re P)(z) =
\re(P(z))$ is only valid
for real arguments.
To give a simple example,
consider the $2$-homogeneous
polynomial $P(z)= z^2$ on $\C$.
We have $P(z)=(x^2-y^2) +i(2xy)$
for $z= x+iy$.  However,
$x^2-y^2$ and $2xy$ are not
the real and imaginary parts of $P$.
Indeed, $P$ is a real polynomial, being 
the complexification of the real 
polynomial $P(t)=t^2$ and so its
imaginary part is zero.

We now consider polynomials on Banach 
lattices.  Let $E$ be a complex Banach
lattice.  The real part of $E$,
denoted by $E_\R$, is defined to be
the real linear span of the set
$\{|z|: z\in E\}$
and is a (real) Banach lattice,
with the norm induced from $E$.
Then $E$, as a vector space,
is the complexification of $E_\R$. We recall that the norms
on $E$ and $E_\R$ are connected by the 
relation $\|z\|_E = \| |z| \|_{E_\R}$ for $z\in E$. 
Our definition of regularity, positivity, respectively, for multilinear
and polynomial mappings is the same
as the linear case \cite[Section 2.2]{MN}.
So we say that an $m$-linear mapping
on $E^m$,
or an $m$-homogeneous polynomial on $E$,
is regular, positive, respectively, if both its real
and imaginary parts are regular, positive on $(E_\R)^m$
or $E_\R$, respectively.

If
 $E, F$ are (real) Banach lattices with $F$ Dedekind 
complete, then the space $\rhpoly{m}{E}{F}$
of regular $m$-homogeneous polynomials, with
the regular norm $\|P\|_r = \| |P|\|$,
is a Banach lattice \cite{BB}.    
Now let  $E, F$ be complex 
Banach lattices, with $F$ Dedekind complete.
Then it follows from the above that
the space $\rhpoly{m}{E}{F}$
of regular $m$-homogeneous polynomials
is the complexification 
of the space $\rhpoly{m}{E_\R}{F_\R}$
of real, regular $m$-homogeneous
polynomials:
$$
\rhpoly{m}{E}{F}=
\bigl(\rhpoly{m}{E_\R}{F_\R}\bigr)_\C
\,. $$
Therefore, $\rhpoly{m}{E}{F}$ 
can be endowed with a complex Banach lattice structure, 
with the norm given by  $\|P\|_r = \|\,|P|\,\|$.
We summarize these observations:

\begin{proposition}
	Let $E$, $F$ be complex Banach lattices, with $F$ 
	Dedekind complete.  
	The space of regular $m$-homogeneous
		polynomials from $E$ into $F$
		with the regular norm $\|P\|_r = \|\,|P|\,\|$ is
		a Dedekind complete complex Banach lattice.
\end{proposition}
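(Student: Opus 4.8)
The plan is to reduce the statement to the real case, using the complexification machinery developed above in this section. Set $R=\rhpoly{m}{E_\R}{F_\R}$. Since $F_\R$ is Dedekind complete, $R$ is, by \cite{BB}, a Dedekind complete real Banach lattice for the regular norm $\|Q\|_r=\|\,|Q|\,\|$, where $\|\cdot\|$ is the supremum norm; in particular the lattice operations on $R$ obey Riesz--Kantorovich-type formulas and $\|Q\|_r=\|Q\|$ for positive $Q\in R$. I would open the proof by recording this together with the vector-space identification $\rhpoly{m}{E}{F}=R_\C$ established above, under which the real and imaginary parts $P_0,P_1$ of a regular polynomial $P$ (in the sense of \eqref{eq:RealIm}) are exactly the two components of the element of $R_\C$ that corresponds to $P$.

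Next I would appeal to the general principle that the complexification of a Dedekind complete real Banach lattice is again a Dedekind complete complex Banach lattice, and apply it to $R$. Being a Banach lattice, $R$ is uniformly complete, so the Krivine functional calculus applies and, for $P=P_0+iP_1\in R_\C$, the element $|P|=\sqrt{P_0^2+P_1^2}=\sup\{P_0\cos\theta+P_1\sin\theta:0\le\theta\le2\pi\}$ exists in $R$; endowed with this modulus and with the norm $\|P\|:=\|\,|P|\,\|$, the space $R_\C$ satisfies the definition of a complex Banach lattice recalled at the start of this section. For Dedekind completeness (which for a complex Banach lattice means Dedekind completeness of its real part) one checks that the real part of $R_\C$ is $R$ itself: $|x+iy|=\sqrt{x^2+y^2}\in R$ for all $x,y\in R$ by uniform completeness, and conversely each $Q\in R$ equals $|Q^+|-|Q^-|$, hence lies in the real span of the moduli of elements of $R_\C$. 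Thus $R_\C$ is Dedekind complete because $R$ is.

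Finally I would confirm that this complex Banach lattice structure on $R_\C=\rhpoly{m}{E}{F}$ is the one asserted: since $|P|=\sqrt{P_0^2+P_1^2}$ is a positive element of $R$, its $R$-norm is its supremum norm, so $\|P\|=\|\,|P|\,\|=\|P\|_r$, the regular norm. I do not anticipate a genuine obstacle here; the substantive content has already been front-loaded into the identifications of this section and into \cite{BB}. The only point calling for mild care is transporting \emph{Dedekind completeness} --- rather than merely the Banach-lattice property --- across the complexification, and this reduces, as above, to identifying the real part of $R_\C$ with $R$ and using the Dedekind completeness of $R$ that comes with the Riesz--Kantorovich description of the lattice operations on regular polynomials in \cite{BB}.
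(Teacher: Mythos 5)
Your proposal is correct and follows essentially the same route as the paper, which presents this proposition as a summary of the preceding observations: identify $\rhpoly{m}{E}{F}$ with $\bigl(\rhpoly{m}{E_\R}{F_\R}\bigr)_\C$, invoke \cite{BB} for the real case, and transport the (Dedekind complete) Banach lattice structure through the complexification. Your extra care in checking that Dedekind completeness passes to the complexification and that the resulting norm is the regular norm merely fills in details the paper leaves implicit.
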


Note that this result in particular tells us that the absolute
value of a complex regular
$m$-homogeneous polynomial 
$P=P_0 +i P_1$ satisfies
$$
|P| = \sqrt{|P_0|^2 + |P_1|^2}\,,
$$
 where this formula is understood in the sense of the Krivine functional calculus \cite{LT}.

 To illustrate the difference between
 the regular and supremum norms,
 we have the following useful
 result by Choi-Kim-Ki \cite{CKK98}:
 
 \begin{proposition}[{\cite{CKK98}}, Theorem 2.4]
 	Let $a,b,c\in \R$ with $|a|<1$, $|b|<1$
 	and $2<|c|\le 4$.
 	Suppose $P(x,y)= ax^2+by^2+cxy \in  \hpolys{2}{\ell_1^2} $.
 	Then, over both the real and  complex numbers,
 	$$
 	\|P\|=1\quad\text{  if and only if }\quad
 	4|c|-c^2 = 4(|a+b|-ab)\,.
 	$$
 \end{proposition}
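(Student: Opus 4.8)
The plan is to compute $\|P\|$ explicitly on the unit ball of $\ell_1^2$ and read the stated criterion off the resulting formula. By $2$-homogeneity, $\|P\|=\sup\{\,|P(x,y)| : |x|+|y|=1\,\}$, and replacing $(x,y)$ by $(-x,y)$ turns $c$ into $-c$ while leaving both $\|P\|$ and the proposed identity unchanged; so we may assume $2<c\le 4$ throughout.

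For the real case, the unit sphere of $\ell_1^2(\R)$ is a union of four segments, and the substitution $(x,y)=(\varepsilon_1 t,\varepsilon_2(1-t))$ with $t\in[0,1]$ and $\varepsilon_1,\varepsilon_2\in\{\pm1\}$ reduces the supremum to $\max_{\sigma=\pm1}\sup_{t\in[0,1]}|f_\sigma(t)|$, where $f_\sigma(t)=at^2+b(1-t)^2+\sigma c\,t(1-t)$. Each $f_\sigma$ is a quadratic in $t$; the hypotheses $|a|,|b|<1<c/2$ make the leading coefficient $a+b-c$ of $f_+$ negative and the leading coefficient $a+b+c$ of $f_-$ positive, and they force the vertices of both parabolas to lie strictly inside $(0,1)$. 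A short computation then gives $\max_{[0,1]}f_+=\frac{c^2-4ab}{4(c-a-b)}$ (from the pleasant cancellation $4(c-a-b)\max f_+=c^2-4ab$) with $\min_{[0,1]}f_+=\min(a,b)$, and $\min_{[0,1]}f_-=-\frac{c^2-4ab}{4(c+a+b)}$ with $\max_{[0,1]}f_-=\max(a,b)$. Since $c^2-4ab>0$ here, the larger of these two vertex values dominates $\max(|a|,|b|)$, so
\[
\|P\|_{\R}=\max\!\Bigl(\tfrac{c^2-4ab}{4(c-a-b)},\ \tfrac{c^2-4ab}{4(c+a+b)}\Bigr)=\frac{c^2-4ab}{4\,(c-|a+b|)}.
\]
Hence $\|P\|_\R=1$ if and only if $c^2-4ab=4c-4|a+b|$, that is, $4|c|-c^2=4(|a+b|-ab)$.

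For the complex case, writing $x=r_1e^{i\theta_1}$, $y=r_2e^{i\theta_2}$ with $r_1+r_2=1$ and factoring out $e^{i(\theta_1+\theta_2)}$ gives $|P(x,y)|^2=h(\cos(\theta_1-\theta_2))$, where, with $A=ar_1^2$, $B=br_2^2$, $C=cr_1r_2$,
\[
h(u)=4ABu^2+2(A+B)Cu+C^2+(A-B)^2,\qquad u\in[-1,1].
\]
Since $h(\pm1)=(A+B\pm C)^2=f_\pm(r_1)^2$, we get $\|P\|_\C^2\ge\|P\|_\R^2$ for free, and it remains only to show that $h$ attains no larger value. If $ab\ge0$ then $h$ is convex, its maximum over $[-1,1]$ is at an endpoint, and $\|P\|_\C=\|P\|_\R$ (equivalently, after reducing by sign changes to non-negative coefficients, the triangle inequality $|ax^2+by^2+cxy|\le|a|\,|x|^2+|b|\,|y|^2+|c|\,|x|\,|y|$ is already sharp at real points). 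If $ab<0$ then $h$ is concave, and the only threat is an interior maximum at the vertex $u^\ast=-\frac{(A+B)C}{4AB}$, which can occur only when $u^\ast\in[-1,1]$; at that point $h(u^\ast)=C^2+(A-B)^2+\frac{(A+B)^2C^2}{4|AB|}$.

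The main obstacle is to show $h(u^\ast)\le\|P\|_\R^2$ in this last case. Using the constraint $|u^\ast|\le1$ in the form $\frac{(A+B)^2C^2}{4|AB|}\le4|AB|$ together with the identity $(A-B)^2=(A+B)^2+4|AB|$ gives $h(u^\ast)\le C^2+(|A|+|B|)^2+4|A|\,|B|$, so it suffices to verify the elementary inequality
\[
c^2r_1^2r_2^2+\bigl(|a|r_1^2+|b|r_2^2\bigr)^2+4|a|\,|b|\,r_1^2r_2^2\ \le\ \Bigl(\frac{c^2+4|a|\,|b|}{4\bigl(c-\bigl|\,|a|-|b|\,\bigr|\bigr)}\Bigr)^{2}
\]
for $r_1+r_2=1$ ranging over the set where $u^\ast\in[-1,1]$. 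This is exactly where $|c|\le4$ is used: when $|c|>4$ one checks directly that $\|P\|>1$ no matter what $a,b$ are, so the equivalence is vacuous there, whereas for $2<|c|\le4$ the displayed estimate holds — and it is genuinely tight, equality being approached as $r_1=r_2=\tfrac12$ and $c\to4$ — so this bounded one-parameter optimisation is the delicate point. Granting it, $\|P\|_\C=\|P\|_\R$ under the hypotheses, and the complex half of the statement follows from the real half.
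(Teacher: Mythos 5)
The paper offers no proof of this proposition: it is quoted directly from Choi--Kim--Ki \cite{CKK98}, so there is nothing in the text to compare your argument against, and it must be judged on its own terms. Your real half is essentially complete and correct. The reduction to the two quadratics $f_\pm$, the vertex computations yielding $\frac{c^2-4ab}{4(c\mp(a+b))}$, and the conclusion $\|P\|_{\R}=\frac{c^2-4ab}{4(c-|a+b|)}$ all check out. One small elision: the claim that the larger vertex value dominates $\max(|a|,|b|)$ does not follow merely from $c^2-4ab>0$; after using the symmetries $(a,b,c)\mapsto(-a,-b,c)$ and $a\leftrightarrow b$ to arrange $a=\max(|a|,|b|)$, the needed inequality $c^2-4ab\ge 4a\bigl(c-|a+b|\bigr)$ reduces to $(c-2a)^2\ge 0$, which deserves a line since the estimate is tight.

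The genuine gap is in the complex half. The passage to $h(u)=4ABu^2+2(A+B)Cu+C^2+(A-B)^2$, the identification $h(\pm1)=f_\pm(r_1)^2$, and the convex case $ab\ge 0$ are all fine. But when $ab<0$ the entire complex statement rests on the displayed inequality bounding $h(u^\ast)$ by $\|P\|_{\R}^2$, and you do not prove it --- you label it ``the delicate point'' and proceed by ``granting it.'' That inequality \emph{is} the content of the complex case, so as written the complex half is asserted rather than established; a two-variable optimisation in $(r_1,|a|,|b|)$ subject to the constraint $|u^\ast|\le 1$ still has to be carried out (and your heuristic about where $|c|\le 4$ enters is not substantiated --- indeed $|c|\le4$ is a standing hypothesis of the proposition, so the aside about $|c|>4$ is a red herring rather than an explanation of the constraint's role). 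Until that estimate is verified, you have only proved $\|P\|_{\C}\ge\|P\|_{\R}$ together with the real characterisation, which does not yield the complex ``only if'' direction.
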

 
 We can use this result to construct examples of polynomials
 on $\ell_1^2$ for which the regular
 norm is greater than the supremum norm.
 
 \begin{example}\label{norm}
 	On $\ell_1^2$, let
 	$$ 
 	P(x,y) =
 	\frac{1}{2}x^2 -\frac{1}{2}y^2 + (2+\sqrt{3}) xy\,.
 	$$
 	Then 
 	$$
 	\|P\|=1 \quad \text{ and }\quad
 	\|P\|_r = \frac{3+\sqrt{3}}{4}>1 \,.
 	$$
 	 over both 
 	 the real and complex numbers.
  \end{example}
 
 The fact that $\|P\|=1$ follows from the Choi-Kim-Ki result,
 but can also be seen by means of an elementary calculation.  We have
 $$
 |P|(x,y) = \frac{1}{2}x^2 +\frac{1}{2}y^2 + (2+\sqrt{3}) xy
 $$
 and thus 
 $$
 \|P\|_r \ge 
 |P|\Bigl(\frac{1}{2},\frac{1}{2}\Bigr) = 
 \frac{3+\sqrt{3}}{4}> 1\,.   
 $$
 A calculation  shows
 that in fact, $\|P\|_r = \dfrac{3+\sqrt{3}}{4}$ in the real case.
 As the coefficients of $|P|$ are positive, the regular norm
 over the complex numbers is the same.

The absolute value of a regular linear 
mapping $T\colon E\to F$ between complex
Banach lattices satisfies
$$
|T(z)| \le |T|(|z|)
$$
for every $z\in E$ \cite{MN}.
We will show that there is a corresponding result
for complex regular polynomials.
For this, we need a little 
preparation.

Let $E, F$ be real Banach lattices,
with $F$ Dedekind complete.
The Banach lattice of
regular $m$-linear mappings
from $E$ into $F$ is denoted by
$\rlinn{m}{E}{F}$
and the Banach sublattice of
symmetric, regular $m$-linear mappings
is denoted by $\rlinns{m}{E}{F}$.
The definition of the 
order relation shows that 
$\rhpoly{m}{E}{F}$ and
$\rlinns{m}{E}{F}$
are isomorphic as vector lattices.
Bu and Buskes \cite{BB} showed that
for $A\in \rlinn{m}{E}{F}$,
the absolute value $|A|$ is
given by
	$$
|A|(x_1,\dots,x_m) = 
\sup\Bigl\{\, \sum_{i_1}\dots\sum_{i_m}
|A(u^1_{i_1},\dots,u^m_{i_m})|: u^1 \in \Pi(x_1),\dots, u^m\in \Pi(x_m)\Bigr\}
$$
for $x_1,\dots,x_m \ge 0$.
Here, $\Pi(x)$ denotes the set
of partitions of a positive element
$x$, namely, finite sets of positive
vectors whose sum is $x$.
Now $\Pi(x)$ is directed by set 
inclusion and so the iterated suprema
that appear in the formula above
can be interpreted as limits
of increasing nets, with
$\Pi(x_1),\dots,\Pi(x_m)$
as the indexing sets.  Thus, we 
can interchange the operations 
of supremum with the finite sums.

If we define
$$
A^{(k)}\colon E^{m-k} \to
\rlinn{k}{E}{F}
$$
by
$$
A^{(k)}(x_1,\dots,x_{m-k})
(x_{m-k+1},\dots,x_m)
=A(x_1,\dots,x_m)\,,
$$
then the correspondence
$A\leftrightarrow A^{(k)}$
is an
isometric isomorphism between
$\rlinn{m}{E}{F}$
and $\rlinn{m-k}{E}{\rlinn{k}{E}{F}}$.
Furthermore, it follows from the remarks above
that we have $|A|^{(k)} = |A^{(k)}|$ for 
every $A\in \rlinn{m}{E}{F}$ and 
every $k$.  Therefore, this correspondence
is also a lattice homomorphism.
Thus, the Banach lattices
$\rlinn{m}{E}{F}$ 
and $\rlinn{m-k}{E}{\rlinn{k}{E}{F}}$
are isometrically lattice isomorphic.

\begin{proposition}
	\label{Iso}
	Let $E$, $F$ be real or complex Banach lattices,
	with $F$ Dedekind complete.  Then the mapping
	$$
	A\in \rlinn{m}{E}{F} \mapsto 
	A^{(k)} \in \rlinn{m-k}{E}{\rlinn{k}{E}{F}}
	$$
is a Banach lattice isometric isomorphism
for every $k= 1, \dots, m-1$.
\end{proposition}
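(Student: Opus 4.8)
The plan is to prove the statement first for real Banach lattices, where the discussion preceding the proposition already carries out most of the work, and then to obtain the complex case by complexification.

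For the real case I would begin by checking that $A\mapsto A^{(k)}$ is a linear bijection. It is visibly linear, and its inverse is the uncurrying map $B\mapsto\bigl[(x_1,\dots,x_m)\mapsto B(x_1,\dots,x_{m-k})(x_{m-k+1},\dots,x_m)\bigr]$; both currying and uncurrying carry positive multilinear maps to positive multilinear maps, hence regular ones to regular ones, and being mutually inverse they restrict to a bijection of $\rlinn{m}{E}{F}$ onto $\rlinn{m-k}{E}{\rlinn{k}{E}{F}}$. Moreover $A\ge 0$ holds exactly when $A^{(k)}\ge 0$, since both conditions say that $A(x_1,\dots,x_m)\ge 0$ whenever all arguments are positive; so the map is a positive linear bijection with positive inverse, hence a vector lattice isomorphism, and in particular $|A^{(k)}|=|A|^{(k)}$. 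It is isometric because, by the definition of the regular norm, $\|A\|_r$ and $\|A^{(k)}\|_r$ are both the supremum of $\|\,|A|(x_1,\dots,x_m)\,\|$ over the product of the unit balls. (Alternatively, $|A^{(k)}|=|A|^{(k)}$ can be read off directly from the Bu and Buskes partition formula quoted above, using that the iterated suprema over $\Pi(x_1),\dots,\Pi(x_m)$ may be grouped in any order.) Combining these, $A\mapsto A^{(k)}$ is a Banach lattice isometric isomorphism in the real case.

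For the complex case I would reduce to the real one. Using the identifications established earlier, $\rlinn{m}{E}{F}=\bigl(\rlinn{m}{E_\R}{F_\R}\bigr)_\C$ and $\rlinn{k}{E}{F}=\bigl(\rlinn{k}{E_\R}{F_\R}\bigr)_\C$, whose real part $\rlinn{k}{E_\R}{F_\R}$ is Dedekind complete; hence $\rlinn{m-k}{E}{\rlinn{k}{E}{F}}=\bigl(\rlinn{m-k}{E_\R}{\rlinn{k}{E_\R}{F_\R}}\bigr)_\C$, and under these identifications the complex currying map is precisely the complexification of the real currying map. It then remains to observe that the complexification $T_\C$ of a Banach lattice isometric isomorphism $T$ between real Banach lattices is again one: $T_\C$ is a complex-linear bijection, and since lattice homomorphisms commute with the Krivine functional calculus, $|T_\C z|=|Tx+iTy|=\sqrt{(Tx)^2+(Ty)^2}=T\sqrt{x^2+y^2}=T|z|$ for $z=x+iy$, so $T_\C$ preserves moduli and therefore $\|T_\C z\|=\|T|z|\|=\||z|\|=\|z\|$. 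Applying this to the real currying map finishes the proof.

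The only step that is not routine bookkeeping is the compatibility of currying with the lattice operations in the real case, i.e.\ $|A^{(k)}|=|A|^{(k)}$; everything else (linearity, bijectivity, isometry, and passing to complexifications) is mechanical. From the order-isomorphism viewpoint this compatibility follows at once from the standard fact that a positive linear bijection with positive inverse between vector lattices preserves moduli, while from the explicit-formula viewpoint it is exactly the regrouping of partition-suprema noted in the paragraph before the statement. I would therefore keep the write-up short and lean on that discussion rather than repeating the modulus formula in full.
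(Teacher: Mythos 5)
Your proposal is correct and follows essentially the same route as the paper: the real case is handled by the discussion preceding the proposition (currying is a bijection compatible with the order, so $|A^{(k)}|=|A|^{(k)}$ and the regular norms agree), and the complex case is obtained by complexifying the real isomorphism through the identifications $\rlinn{m}{E}{F}\cong\rlinn{m}{E_\R}{F_\R}_\C$. The only cosmetic difference is that you justify $|A^{(k)}|=|A|^{(k)}$ primarily via the fact that a positive linear bijection with positive inverse is a lattice isomorphism, whereas the paper reads it off from the Bu--Buskes partition formula by regrouping the iterated suprema --- an alternative you also note.
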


\begin{proof}
	We have already established this result
	for real Banach lattices.
Now let $E,F$ be complex Banach
lattices, with $F$ Dedekind complete.
Then, complexifying the above isomorphism,
we have 
\begin{align*}
\rlinn{m}{E}{F} &=
\rlinn{m}{E_\R}{F_\R}_\C 
\cong\rlinn{m-k}{E_\R}{\rlinn{k}{E_\R}{F_\R}}_\C \\
&\cong
\rlinn{m-k}{E}{\rlinn{k}{E_\R}{F_\R}_\C}
\cong 
\rlinn{m-k}{E}{\rlinn{k}{E}{F}}\,.
\end{align*}
Therefore
$\rlinn{m}{E}{F}$ and 
$\rlinn{m-k}{E}{\rlinn{k}{E}{F}}$
are isometrically isomorphic as
complex Banach lattices.
\end{proof}

\begin{proposition}
	Let $E$, $F$ be complex Banach lattices, with $F$ 
	Dedekind complete and let
	$P\colon E\to F$ be a regular
	$m$-homogeneous polynomial. Then
	$$
	|P(z)| \le |P|(|z|)
	$$
	for all $z\in E$.
\end{proposition}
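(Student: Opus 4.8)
The plan is to induct on the degree $m$, using at each step the inequality $|T(z)|\le|T|(|z|)$ for regular linear mappings $T$ between complex Banach lattices recalled above, which is exactly the base case $m=1$.

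Assume the statement for regular $(m-1)$-homogeneous polynomials from $E$ into $F$, and write $P=\widehat{A}$ with $A\in\rlinns{m}{E}{F}$. By Proposition~\ref{Iso} (with $k=m-1$), $A^{(m-1)}$ is a regular \emph{linear} mapping from $E$ into the Dedekind complete complex Banach lattice $G:=\rlinn{m-1}{E}{F}$, and $|A^{(m-1)}|=|A|^{(m-1)}$, where $|A|^{(m-1)}(w)$ is the $(m-1)$-linear mapping $(x_1,\dots,x_{m-1})\mapsto|A|(w,x_1,\dots,x_{m-1})$. Fixing $z\in E$ and applying the linear case to $A^{(m-1)}$ gives
$$
|A^{(m-1)}(z)| \;\le\; |A^{(m-1)}|(|z|) \;=\; |A|^{(m-1)}(|z|)
$$
in $G$.

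Now set $B:=A^{(m-1)}(z)\in G$. Since $A$ is symmetric, $B$ is a regular symmetric $(m-1)$-linear mapping from $E$ into $F$, so $\widehat B\colon E\to F$ is a regular $(m-1)$-homogeneous polynomial and $P(z)=A(z,\dots,z)=B(z,\dots,z)=\widehat B(z)$. The inductive hypothesis applied to $\widehat B$ yields $|B(z,\dots,z)|\le|B|(|z|,\dots,|z|)$ in $F$. On the other hand, $|B|=|A^{(m-1)}(z)|\le|A|^{(m-1)}(|z|)$ is an inequality between positive elements of $G_\R=\rlinn{m-1}{E_\R}{F_\R}$; their difference is a positive $(m-1)$-linear mapping $(E_\R)^{m-1}\to F_\R$ and hence sends the positive tuple $(|z|,\dots,|z|)$ to a positive element of $F_\R$, so that
$$
|B|(|z|,\dots,|z|)\;\le\;|A|^{(m-1)}(|z|)(|z|,\dots,|z|)\;=\;|A|(|z|,\dots,|z|)\;=\;|P|(|z|)\,.
$$
Chaining the two estimates gives $|P(z)|=|B(z,\dots,z)|\le|P|(|z|)$.

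The only step that is not purely formal is the passage through Proposition~\ref{Iso}, which is what lets one trade the modulus of the $m$-linear generator $A$ for the modulus of the linear mapping $A^{(m-1)}$ together with the $(m-1)$-homogeneous instance of the result. Everything else --- that a partial evaluation $A^{(m-1)}(z)$ of a regular symmetric multilinear mapping is again a regular symmetric multilinear mapping (built into the codomain $\rlinn{m-1}{E}{F}$ and the vector lattice identification $\rhpoly{m-1}{E}{F}\cong\rlinns{m-1}{E}{F}$), and that an order inequality between positive multilinear mappings may be tested on positive tuples (immediate from the definition of the order) --- is routine bookkeeping, and I anticipate no genuine difficulty there.
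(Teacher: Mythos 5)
Your proof is correct and takes essentially the same route as the paper: an induction on $m$ powered by the linear inequality $|T(z)|\le|T|(|z|)$ applied to $A^{(m-1)}$ together with the identification $|A^{(m-1)}|=|A|^{(m-1)}$ from Proposition~\ref{Iso}, followed by evaluation of a positive difference at a positive tuple. The only (harmless) difference is that the paper runs the induction on the full multilinear inequality $|A(z_1,\dots,z_m)|\le|A|(|z_1|,\dots,|z_m|)$ and specializes to the diagonal at the end, whereas you induct directly on the polynomial statement, using the symmetry of $A$ to treat the partial evaluation $A^{(m-1)}(z)$ as the generator of a regular $(m-1)$-homogeneous polynomial.
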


\begin{proof}
	Let $A$ be the associated
	regular $m$-linear mapping.
	We will show that 
	$$
	|A(z_1,\dots, z_m)|\le
	|A|(|z_1|,\dots,|z_m|)
	$$
	for all $z_1,\dots,z_m\in E$.
	The proof is by induction.
	The case $m=1$ is the linear
	result \cite{MN}.  Assume the result
	holds for $(m-1)$-linear
	mappings. Then, 
	using Proposition \ref{Iso},
	\begin{align*}
	|A(z_1,\dots, z_m)| &=
	|A^{(m-1)}(z_1)(z_2,\dots,z_m)|
		\le
	|A^{(m-1)}(z_1)|(|z_2|,\dots,|z_m|)\\
	& \le
	|A^{(m-1)}|(|z_1|)(|z_2|,\dots,|z_m|))
	=|A|(|z_1|,\dots,|z_m|)\,.
	\end{align*}
Thus the result follows for every $m$.	
\end{proof}

Our next result shows that the complexification process preserves the absolute value.
\begin{proposition}\label{mod}
	Let $E$, $F$ be real Banach lattices, with $F$ Dedekind 
	complete and let $P\in \rhpoly{m}{E}{F}$. Then
	$|P_\C| = |P|_\C$. 
\end{proposition}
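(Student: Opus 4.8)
The plan is to deduce the identity from the structural fact, established above, that $\rhpoly{m}{E_\C}{F_\C} = \bigl(\rhpoly{m}{E}{F}\bigr)_\C$ as complex Banach lattices, together with the elementary observation that the canonical embedding of a real Banach lattice into its complexification preserves moduli. No hard analysis is needed; the content is really the careful identification of the objects being compared.

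First I would observe that $P_\C$ is nothing but the image of $P\in\rhpoly{m}{E}{F}$ under the complexification embedding $j\colon \rhpoly{m}{E}{F}\to\bigl(\rhpoly{m}{E}{F}\bigr)_\C = \rhpoly{m}{E_\C}{F_\C}$. Concretely, in the decomposition $P_\C = (P_\C)_0 + i(P_\C)_1$ of \eqref{eq:RealIm} one has $(P_\C)_0 = P$ and $(P_\C)_1 = 0$, because on real arguments $P_\C$ agrees with $P$ and takes values in $F$; thus $P_\C$ is a \emph{real} element of the polynomial lattice. The second ingredient is the general fact that for a real Banach lattice $H$ and $h\in H$ the element $h + i0$ of $H_\C$ has modulus $|h+i0| = \sqrt{h^2+0^2} = \sqrt{h^2} = |h|$ in the sense of the Krivine functional calculus, so that $|j(h)| = j(|h|)$ for every $h\in H$. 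Applying this with $H = \rhpoly{m}{E}{F}$ and $h = P$ gives $|P_\C| = |j(P)| = j(|P|) = |P|_\C$, which is exactly the assertion; one may equivalently invoke the formula $|P_\C| = \sqrt{|(P_\C)_0|^2 + |(P_\C)_1|^2}$ recorded after Proposition~1, which reduces to $\sqrt{|P|^2+0} = |P|$.

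The only point that needs genuine care — and hence the main obstacle — is bookkeeping about where each modulus is computed: $|P_\C|$ is the modulus taken in the \emph{complex} Banach lattice $\rhpoly{m}{E_\C}{F_\C}$, whereas $|P|_\C$ is the complexification of the modulus of $P$ taken in the \emph{real} Banach lattice $\rhpoly{m}{E}{F}$. The argument works precisely because the earlier identification lets us treat these two lattices as one and the same object, in which $P_\C$ sits inside the real part; once that is accepted, the computation above is immediate. A direct, coordinate-level alternative — combining the Bu--Buskes formula for $|A|$ with the explicit formula for $P_\C(x+iy)$ in terms of the generating symmetric multilinear map — is available, but it is substantially more cumbersome and yields no additional insight.
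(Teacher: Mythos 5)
There is a genuine gap here, and it is one of interpretation that removes the actual content of the statement. You identify $|P_\C|$ with the modulus of the real element $P+i0$ in the complexified lattice $\bigl(\rhpoly{m}{E}{F}\bigr)_\C$, under which the identity $|P+i0|=\sqrt{P^2+0^2}=|P|$ is essentially definitional. But that is not the modulus the proposition is comparing with $|P|_\C$. The point of the statement is that the modulus of $P_\C$ computed \emph{intrinsically}, as a regular polynomial on the complex Banach lattice $E_\C$ --- i.e.\ via the Riesz--Kantorovich/Bu--Buskes supremum over \emph{complex} arguments; for $m=1$ this reads $|T_\C|(u)=\sup\{|T_\C(z)|: z\in E_\C,\ |z|\le u\}$ for $u\in E_+$ --- coincides with the complexification of the real modulus $|T|(u)=\sup\{|Tv|: v\in E,\ |v|\le u\}$. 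One inequality, $|T|_\C\le|T_\C|$, is immediate because the real test vectors $v$ with $|v|\le u$ form a subfamily of the complex ones; the substantive direction is that passing to the strictly larger family $\{z\in E_\C: |z|\le u\}$ does \emph{not} increase the supremum. Your argument never confronts this: by declaring the two lattices ``one and the same object'' you assume precisely the compatibility of the two moduli that the proposition asserts. (If one instead takes the complexification modulus as the \emph{definition} of $|P_\C|$, the proposition is vacuous and can no longer do its job in the sequel, where $|P_\C|$ must be compared with values and suprema of $P_\C$ at genuinely complex arguments, e.g.\ in the proof of Theorem~\ref{regnorm}.)

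The paper closes exactly this gap. For $m=1$ and $u\in E_+$ it uses $|T_\C(x+iy)|=\sup_\theta\bigl((\cos\theta)Tx+(\sin\theta)Ty\bigr)$, bounds each term by $|T|\bigl(|(\cos\theta)x+(\sin\theta)y|\bigr)\le |T|\bigl(|x+iy|\bigr)\le |T|(u)$, and so obtains the hard inequality $|T_\C|(u)\le |T|(u)$; the case $m>1$ then follows by induction on $m$ using the currying lattice isometry $\rlinn{m}{E}{F}\cong\rlinn{}{E}{\rlinn{m-1}{E}{F}}$ of Proposition~\ref{Iso}. Your observation that the embedding $j$ of a real Banach lattice into its complexification satisfies $|j(h)|=j(|h|)$ is correct, but it only delivers the easy half of the comparison; to repair the proof you would need to add the $\cos\theta,\sin\theta$ estimate (or an equivalent argument) showing that complex test vectors with $|z|\le u$ contribute nothing beyond the real ones.
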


\begin{proof}
	We begin with the case $m=1$.  Let $T\colon E\to F$ 
	be a regular linear operator.  For $u\in E_+$ we have
	\begin{align*}
		|T_\C|(u) &= \sup\{|T_\C(x+iy)|: x,y\in E, |x+iy|\le u\}
		 = \sup\{|Tx+iTy|, |x+iy|\le u \}\\
		 &= \sup \sup_{\theta\in \R} \{|(\cos\theta) Tx +(\sin\theta) Ty|,
		 |x+iy| \le u\}\\
		 &\le \sup \sup_{\theta\in \R} \{|T|| (\cos\theta)x +(\sin\theta)y|,
		 |x+iy| \le u\}\\
		 &\le \sup\{|T||x+iy|, |x+iy| \le u\}
		 = |T|(u)  = |T|_\C (u)\,.
	\end{align*}
On the other hand,
\begin{align*}
	|T|_\C(u) &= |T|(u) = \sup\{|T(v)|:v\in E, |v| \le u\}\\
	&\le \sup\{|T(x+iy)| : x,y\in E, |x+iy| \le u\} = |T_\C|(u)
\end{align*}
for every $u\in E_+$.  Therefore $|T_\C|=|T|_\C$ on $E_+$ and 
it follows that this holds on $E_\C$.

For the case $m>1$ we have, using 
Proposition~\ref{Iso}, Banach lattice 
isomorphisms
$$
\rlinn{m}{E}{F}_\C \cong \rlinn{m}{E_\C}{F_\C}
\cong 
\rlinn{}{E_\C}{\rlinn{m-1}{E_\C}{F_\C}} 
\cong \rlinn{}{E}{\rlinn{m-1}{E}{F}}_\C \,.
$$
Thus the result follows by induction on $m$.
\end{proof}

The supremum norm of a polynomial on real Banach spaces is not,
in general, preserved by complexification.  Gustavo, Mu\~noz, Tonge 
have shown that if $E$, $F$ are
real Banach spaces and $P\colon E\to F$ is a bounded $m$-homogeneous
polynomial, then the norm of its complexification satisfies
$$
\|P_\C\|_\nu \le 2^{m-1}\| P\|
$$
where $\nu$ is any natural complexification process
\cite[Prop.~18]{MST}, and this inequality is sharp.
In the Banach lattice case, the regular norm
is much better behaved.

\begin{theorem}\label{regnorm}
		Let $E$, $F$ be real Banach lattices, with $F$ Dedekind 
	complete and let $P\in \rhpoly{m}{E}{F}$. Then
	$\|P_\C\|_r = \|P\|_r$.	 
\end{theorem}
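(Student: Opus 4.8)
The plan is to reduce the statement, via Proposition~\ref{mod}, to the assertion that complexification does not change the supremum norm of a \emph{positive} $m$-homogeneous polynomial. By definition $\|P\|_r = \|\,|P|\,\|$, where $\|\cdot\|$ is the supremum norm and $|P|\in\rhpoly{m}{E}{F}$ is the modulus of $P$, a positive polynomial; likewise $\|P_\C\|_r = \|\,|P_\C|\,\|$. Since $|P_\C| = |P|_\C$ by Proposition~\ref{mod}, we get $\|P_\C\|_r = \|\,|P|_\C\,\|$, and the theorem reduces to showing that the positive polynomial $Q:=|P|$ satisfies $\|Q_\C\| = \|Q\|$, the left-hand supremum being taken over the unit ball of $E_\C$ and the right-hand one over the unit ball of $E$.

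The inequality $\|Q_\C\|\ge\|Q\|$ is immediate: for real $x\in E$ one has $\|x\|_{E_\C} = \|x\|_E$, so the unit ball of $E$ sits inside that of $E_\C$, and $Q_\C(x) = Q(x)\in F$ with $\|\cdot\|_{F_\C}$ restricting to $\|\cdot\|_F$ on $F$; taking the supremum over such $x$ gives the claim.

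For the reverse inequality I would invoke a pointwise domination estimate. Since $Q\ge 0$ we have $|Q|=Q$, hence $|Q|_\C = Q_\C$, and so Proposition~\ref{mod} (applied to $Q$) gives $|Q_\C| = Q_\C$; that is, $Q_\C$ is a positive element of $\rhpoly{m}{E_\C}{F_\C}$. Applying to $Q_\C$ the domination inequality $|R(z)|\le |R|(|z|)$ established above for regular polynomials on complex Banach lattices, and using $|z|\in(E_\C)_\R = E$, we obtain
$$
|Q_\C(z)| \;\le\; |Q_\C|(|z|) \;=\; Q_\C(|z|) \;=\; Q(|z|)\qquad\text{in } F,\quad\text{for every } z\in E_\C.
$$
Since the norm of $F$ is monotone, $Q$ is $m$-homogeneous, and $\|\,|z|\,\|_E = \|z\|_{E_\C}$, this gives
$$
\|Q_\C(z)\|_{F_\C} = \|\,|Q_\C(z)|\,\|_F \le \|Q(|z|)\|_F \le \|Q\|\,\|z\|_{E_\C}^{\,m},
$$
and taking the supremum over the unit ball of $E_\C$ yields $\|Q_\C\|\le\|Q\|$. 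Combining the two bounds proves the theorem.

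I do not anticipate a genuine obstacle once Proposition~\ref{mod} and the domination inequality are in hand; the only point requiring care is that the estimate must be run with the positive polynomial $|P|$ (equivalently with $|P_\C| = |P|_\C$) and not with $P_\C$ itself, because only for a positive polynomial does the bound $\|\cdot(z)\|_{F_\C}\le\|\cdot\|\,\|z\|^m$ control the quantity $\|P_\C\|_r = \|\,|P_\C|\,\|$. One could instead argue even more briefly that $\|P_\C\|_r$ is by construction the norm of the ``real'' element $P_\C = P+i0$ in the complexified lattice $\bigl(\rhpoly{m}{E}{F}\bigr)_\C$, which equals $\|\,|P|\,\| = \|P\|_r$; but the pointwise formulation above has the advantage of matching concrete computations such as Example~\ref{norm}.
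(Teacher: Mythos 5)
Your proof is correct and follows essentially the same route as the paper: both reduce via Proposition~\ref{mod} to the identity $\|P_\C\|_r=\|\,|P|_\C\,\|$ and then bound $\bigl|\,|P|_\C(z)\bigr|\le |P|_\C(|z|)=|P|(|z|)$ using the domination inequality for the positive polynomial $|P|_\C$. Your write-up merely makes explicit the intermediate steps the paper compresses into one display.
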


\begin{proof}
	Clearly, we have $\|P\|_r \le \|P_\C\|_r$.  Conversely,
	using Proposition \ref{mod},
	\begin{align*}
		\|P_C\|_r &= \| \,|P_\C|\, \| = \| \, |P|_\C\|
		= \sup\{\bigl| |P|_\C (z)\bigr|: z\in E_\C, \|z\| \le 1\} \\
		&\le \sup\{|P|_\C (|z|): z\in E_\C, \|z\| \le 1\}
		= \sup\{|P| (|z|): z\in E_\C, \|z\| \le 1\} = \|P\|_r\, .
	\end{align*}
\end{proof}

Now let $E$, $F$ be complex
Banach lattices.  A complex linear
operator $T\colon
E \to F$ is a homomorphism
if $|T(z)| = T(|z|)$ for every $z\in E$.
This is equivalent to $T$ being the
complexification of a lattice homomorphism
between the real vector lattices $E_\R$ and $F_\R$
\cite[p.~136]{Sch}.\\[1pt]

Recall that the $k$th Fr\'echet
derivative of an $m$-homogeneous
polynomial $P=\widehat{A}\colon E\to F$
between real or complex vector spaces
is given  by
$$
\widehat{d}^k P(x)(y)=
m(m-1)\dots (m-k+1)A(x^{m-k} y^{k})\,.
$$
Thus  $\widehat{d}^k P$
is, up to a constant multiple, the $k$-homogeneous polynomial
associated with the 
$(m-k)$-linear
mapping $A^{(k)}$.
The following proposition follows immediately from this fact.

\begin{proposition}\label{DiffHom}
	Let $E$, $F$ be real or complex Banach lattices,
	with $F$ Dedekind complete.
	Then
	$$
	\bigl|\widehat{d^k}P\bigr| = \widehat{d^k}|P|
	$$
	for every $P\in \rhpoly{m}{E}{F}$
	and every $k$, $1\le k\le m$.
	Therefore, the mapping
	$$
	\widehat{d^k}\colon
	\rhpoly{m}{E}{F} \to
	\rhpoly{m-k}{E}{\rhpoly{k}{E}{F}}
	$$
	is a vector lattice isomorphism
	onto its image.
\end{proposition}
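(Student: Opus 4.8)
The plan is to realise the operator $\widehat{d^k}$, up to a positive scalar, as a composition of vector lattice isomorphisms that are already at hand, so that the identity $|\widehat{d^k}P|=\widehat{d^k}|P|$ and the assertion that $\widehat{d^k}$ is a vector lattice isomorphism onto its image both emerge at once. Throughout, put $c_{m,k}=m(m-1)\cdots(m-k+1)>0$, and for a Dedekind complete Banach lattice $Y$ let $\Theta_n\colon\rhpoly{n}{E}{Y}\to\rlinns{n}{E}{Y}$, $\widehat{A}\mapsto A$, be the symmetrisation correspondence. This $\Theta_n$ is a vector lattice isomorphism: in the real case this is the remark preceding Proposition~\ref{Iso}, and in the complex case $\Theta_n$ is the complexification of that real isomorphism (polarisation commutes with taking real and imaginary parts). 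Recall also that $\rhpoly{k}{E}{F}$ is itself a Dedekind complete Banach lattice, so the target space in the statement makes sense. We may assume $1\le k<m$; the case $k=m$ is trivial, since there $\widehat{d^m}P=m!\,P$ and $\rhpoly{0}{E}{\rhpoly{m}{E}{F}}$ is just $\rhpoly{m}{E}{F}$.

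Let $P=\widehat{A}\in\rhpoly{m}{E}{F}$, so that $A\in\rlinns{m}{E}{F}$. The displayed formula for the Fr\'echet derivative gives, for all $x,y\in E$,
$$\tfrac{1}{c_{m,k}}\,\widehat{d^k}P(x)(y)=A(x^{m-k}y^{k})=A^{(k)}(x,\dots,x)(y,\dots,y),$$
so $\tfrac{1}{c_{m,k}}\widehat{d^k}P$ is precisely the $(m-k)$-homogeneous polynomial $x\mapsto\Theta_k^{-1}\!\big(A^{(k)}(x,\dots,x)\big)$ (with $m-k$ copies of $x$). Consequently $\widehat{d^k}=c_{m,k}\,\Lambda$, where $\Lambda$ is the composition
$$\rhpoly{m}{E}{F}\ \xrightarrow{\ \Theta_m\ }\ \rlinns{m}{E}{F}\ \xrightarrow{\ (\cdot)^{(k)}\ }\ \rlinns{m-k}{E}{\rlinns{k}{E}{F}}\ \xrightarrow{\ \Psi\ }\ \rhpoly{m-k}{E}{\rhpoly{k}{E}{F}},$$
with $\Psi(B)$ the polynomial $x\mapsto\Theta_k^{-1}\!\big(B(x,\dots,x)\big)$. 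Since $\Psi$ is assembled from the lattice isomorphisms $\Theta_k$ and $\Theta_{m-k}$ (the latter with values in $\rhpoly{k}{E}{F}$), it is again a vector lattice isomorphism onto its image; in particular the composite really does land in $\rhpoly{m-k}{E}{\rhpoly{k}{E}{F}}$, so $\widehat{d^k}P$ is a regular polynomial with regular polynomial values.

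Only the middle arrow still needs attention. Proposition~\ref{Iso} provides an isometric lattice isomorphism $A\mapsto A^{(k)}$ of $\rlinn{m}{E}{F}$ onto $\rlinn{m-k}{E}{\rlinn{k}{E}{F}}$, and I claim it sends the sublattice $\rlinns{m}{E}{F}$ into $\rlinns{m-k}{E}{\rlinns{k}{E}{F}}$. This is immediate from symmetry: if $A$ is symmetric, then invariance under permutations of the last $k$ arguments shows each $A^{(k)}(x_1,\dots,x_{m-k})$ is symmetric, and invariance under permutations of the first $m-k$ arguments shows $A^{(k)}$ is symmetric as a function of $(x_1,\dots,x_{m-k})$. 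As the symmetric regular multilinear mappings form a Banach sublattice of all regular multilinear mappings, the modulus is computed identically in either space, so $(\cdot)^{(k)}$ restricts to a vector lattice isomorphism of $\rlinns{m}{E}{F}$ onto its image.

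Putting the three arrows together, $\Lambda$ is a composition of vector lattice isomorphisms onto their images and hence is one itself, and multiplication by the positive scalar $c_{m,k}$ does not change this; therefore $\widehat{d^k}$ is a vector lattice isomorphism onto its image. In particular $\widehat{d^k}$ preserves the modulus, which is exactly the identity $|\widehat{d^k}P|=\widehat{d^k}|P|$, and the last sentence of the proposition is then immediate. The only spot calling for any care is the compatibility of Proposition~\ref{Iso} with the symmetric sublattices, together with the routine bookkeeping of complexification in the complex case; neither constitutes a genuine obstacle.
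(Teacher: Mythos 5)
Your proof is correct and follows essentially the same route as the paper, which simply observes that $\widehat{d^k}P$ is, up to the positive constant $m(m-1)\cdots(m-k+1)$, the polynomial associated with $A^{(k)}$ and lets the result ``follow immediately'' from Proposition~\ref{Iso} together with the lattice isomorphism between regular polynomials and symmetric regular multilinear mappings. You have merely written out in full the factorisation and the compatibility checks (symmetric sublattices, complexification) that the paper leaves implicit.
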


We point out that the mapping
$\widehat{d^k}$ is not in general an isometry
in either the real or complex cases.
Taking $E=\ell_1$ and $F= \R$ or $\C$,
the $2$-homogeneous polynomial
$P(x)=x_1x_2$ has regular norm
$\|P\|_r = 1/4$, while
the regular norm of the linear
mapping $\widehat{d^1}P\colon
\ell_1 \to \ell_\infty$ 
is
$\|\widehat{d^1}P\|_r = 1/2$.

\bigskip
The Krivine functional calculus \cite[1.d.1]{LT}   
allows one to prove
a number of H\"older type inequalities.
For example, we have
\begin{equation}\label{LT}
\| |x|^\theta |y|^{1-\theta}\| \le
\|x\|^\theta \|y\|^{1-\theta}
\end{equation}
for all elements $x, y$ of a real or complex
Banach lattice, with $0<\theta<1$.  Our next result will prove
useful when dealing with power series whose
terms are regular homogeneous polynomials.
We refer to Kusraev
\cite{Kusraev07}
for some similar
inequalities.

\begin{proposition}\label{p:holder}
	Let $E, F$ be real or
	complex Banach lattices,
	with $F$ Dedekind complete,  and let
	 $P\colon E \to F$ be a positive
	$m$-homogeneous polynomial.  Then
	$$
	P(|x|^\theta |y|^{1-\theta}) \le 
	P(|x|)^\theta \, P(|y|)^{1-\theta}
	$$
	for every $0<\theta<1$ and  every $x,y\in E$.
\end{proposition}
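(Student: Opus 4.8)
The plan is to reduce the $m$-homogeneous polynomial inequality to the linear Hölder-type inequality \eqref{LT} by first establishing the corresponding statement for the generating symmetric $m$-linear mapping and then specializing. Concretely, let $A$ be the symmetric $m$-linear mapping with $P = \widehat{A}$; since $P$ is positive, $A$ is a positive $m$-linear mapping. I would first prove that for positive vectors $u_1, \dots, u_m, v_1, \dots, v_m \in E_+$ and $0 < \theta < 1$,
\begin{equation*}
A\bigl(u_1^\theta v_1^{1-\theta}, \dots, u_m^\theta v_m^{1-\theta}\bigr)
\le A(u_1, \dots, u_m)^\theta \, A(v_1, \dots, v_m)^{1-\theta},
\end{equation*}
again in the sense of the Krivine functional calculus, and then set $u_j = |x|$ and $v_j = |y|$ for all $j$ to obtain the homogeneous statement (using $P(w) = A(w,\dots,w)$ and that $|x|^\theta|y|^{1-\theta} \ge 0$).

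The $m$-linear inequality is itself most naturally handled one coordinate at a time, i.e. by an induction that reduces to the case $m = 1$. For the base case, a positive linear operator $T$ satisfies $T(u^\theta v^{1-\theta}) \le (Tu)^\theta (Tv)^{1-\theta}$: this is exactly the operator-level Hölder inequality for positive operators, which follows from the scalar inequality $st \le \theta s^{1/\theta} + (1-\theta) t^{1/(1-\theta)}$ applied through the Krivine calculus (equivalently, it is a standard consequence of \cite[1.d.1]{LT}, in the same spirit as \eqref{LT}). For the inductive step, fix all but the last argument and apply Proposition~\ref{Iso}: write $A(u_1,\dots,u_{m-1}, \cdot) = A^{(1)}(u_1,\dots,u_{m-1})$, a positive linear operator, so the case $m=1$ handles the last slot, and then apply the induction hypothesis to the positive $(m-1)$-linear mapping obtained by pairing $A^{(1)}$ with a fixed positive vector in the $\rlinn{1}{E}{F}$-valued target (which is again Dedekind complete). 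One must check positivity is preserved under these rearrangements, but that is immediate from the definitions.

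The main obstacle I anticipate is purely technical rather than conceptual: making the Krivine-calculus manipulations rigorous when the values live in $F$ rather than in $\R$. The identity $u^\theta v^{1-\theta}$ and the inequality $st \le \theta s^{1/\theta} + (1-\theta)t^{1/(1-\theta)}$ must be interpreted via the homogeneous functional calculus on the sublattice generated by the relevant positive vectors, and one needs that positive linear (and $m$-linear) maps respect this calculus on positive cones. For the linear case this is classical; for the step where $F$ is replaced by $\rlinn{1}{E}{F}$ one relies on Proposition~\ref{Iso} identifying the order structure, together with the fact that evaluation at a fixed positive vector is a positive linear map. Once these compatibility points are in place the induction runs mechanically, and the homogeneous inequality drops out by the diagonal substitution described above.
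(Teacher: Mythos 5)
There is a genuine gap in your inductive step. Peeling off one slot at a time does not compose to the stated inequality: if you apply the linear H\"older inequality in the last slot and the induction hypothesis in the first $m-1$ slots, you are left with the mixed quantities $A(u_1,\dots,u_{m-1},v_m)$ and $A(v_1,\dots,v_{m-1},u_m)$ (equivalently, in the operator-valued formulation, you must evaluate the Krivine expression $A^{(1)}(u_1,\dots,u_{m-1})^\theta A^{(1)}(v_1,\dots,v_{m-1})^{1-\theta}$ at the point $u_m^\theta v_m^{1-\theta}$, which is precisely a bilinear H\"older problem of the kind you are trying to prove, so the argument is circular). Absorbing the mixed terms would require a Cauchy--Schwarz inequality such as $A(u,v)^2\le A(u,u)\,A(v,v)$, and this fails for positive symmetric multilinear forms, which need not be positive semidefinite. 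Concretely, on $E=\ell_\infty^2$ let $A(f,g)=\tfrac12\bigl(f(1)g(2)+f(2)g(1)\bigr)$, $P=\widehat{A}$, $u=(1,\eps)$, $v=(\eps,1)$, $\theta=\tfrac12$. The iterated scheme only yields $P(u^{1/2}v^{1/2})\le A(u,u)^{1/4}A(u,v)^{1/2}A(v,v)^{1/4}=\eps^{1/2}\bigl((1+\eps^2)/2\bigr)^{1/2}$, which is of order $\eps^{1/2}$, whereas the proposition asserts the much stronger bound $P(u^{1/2}v^{1/2})\le P(u)^{1/2}P(v)^{1/2}=\eps$. So the induction proves a strictly weaker inequality and cannot close. (Your base case $m=1$ is fine: it is the classical H\"older inequality for positive operators, obtained from the $\eps$-parametrized Young inequality and a local infimum argument.)

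The inequality is genuinely a joint statement in all $m$ arguments and has to be proved in one stroke. The paper does this by localizing and representing: set $a=|x|\vee|y|$, identify the principal ideal $E_a$ with a $C(K)$ space, on which the Krivine expression $|x|^\theta|y|^{1-\theta}$ becomes the pointwise product; then use Fremlin's theorem to represent the positive polynomial $P|_{E_a}$ by a positive Borel measure $\mu$ on $K^m$, and apply the integral H\"older inequality with exponents $1/\theta$ and $1/(1-\theta)$ to the single product $\prod_j |x(t_j)|^\theta|y(t_j)|^{1-\theta}$ against $\mu$. The essential point your reduction misses is that all $m$ factors carry the \emph{same} pair of exponents $(\theta,1-\theta)$ and must be grouped into one application of H\"older over $K^m$, not $m$ successive applications. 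If you want to keep a purely functional-calculus flavour, you could linearize $A$ over the Fremlin tensor product and invoke the linear case for the resulting positive operator, but verifying that $\bigotimes_j(u^\theta v^{1-\theta})=(\bigotimes_j u)^\theta(\bigotimes_j v)^{1-\theta}$ there reduces to the same local $C(K)$ computation.
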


\begin{proof} 	
Let $a= |x|\vee|y|$. The ideal $E_a$,
which contains $x$ and $y$, is Banach lattice isomorphic
to  $C(K)$ for some compact, Hausdorff space $K$ and the expression $|x|^\theta|y|^{1-\theta}$
defined by the Krivine functional calculus
coincides with its value in $C(K)$.  
The restriction of $P$ to $E_a$ is positive and so,
by Fremlin's theorem \cite{Fremlin1}, there exists a regular,
positive Borel measure
$\mu$ on $K^m$ such that 
$$
P(w) = \int_{K^m} w(t_1) \dots w(t_m)\,d\mu(t_1,\dots,t_m)
$$
for every $w\in E_a$.  Let $\theta\in(0,1)$.
Applying  H\"older's inequality with $p=1/\theta$, 
we have
\begin{align*}
P(|x|^\theta |y|^{1-\theta}) &=
 \int_{K^m} |x(t_1)|^\theta\dots |x(t_m)|^\theta
|y(t_1)|^{1-\theta}\dots|y(t_m)|^{1-\theta}\,
d\mu(t_1,\dots,t_m)\\
&\le  \Bigl(\int_{K^m} |x(t_1)\dots x(t_m)| \,d\mu(t)\Bigr)^\theta
\Bigl(\int_{K^m} |y(t_1)\dots y(t_m)| \,d\mu(t)\Bigr)^{1-\theta}\\
&= P(|x|)^\theta \, P(|y|)^{1-\theta}\,.
\end{align*}
\end{proof}

Taking $P=\varphi^m$, where $\varphi\in E'$, and $F=\C$, we see that (\ref{LT})
is a consequence of this result.

\bigskip

\section{Power series with regular terms}

Absolute convergence of a power 
series $\sum_m c_m z^m$ in one complex
variable at a point requires that 
the numbers $|c_m z^m|$ are summable.
Of course, this is the same as
summability of the numbers $|c_m| |z|^m$.
However, on a Banach lattice, these 
lead to two different conditions.

We shall say that a power series $\sum_m P_m$ 
with regular terms on
a complex Banach lattice  $E$
is \emph{regularly convergent} at a point
$z\in E$ if the series $\sum_m |P_m|(|z|)$ converges.
Note that this is stronger than absolute convergence,
since
$$
|P_m(z)| \le |P_m|(|z|)
$$
for all $z\in E$.
We shall see that regular convergence
is in fact a strictly stronger
condition than absolute convergence.

\bigskip
If $E$ has a $1$-unconditional Schauder basis,
then, by \cite{GrecuR}, every regular $m$-homogeneous polynomial 
$P_m$ on $E$ has a pointwise unconditionally convergent
monomial expansion of the form
$$
P_m(z)  = \sum_{|\alpha|=m } c_\alpha z^\alpha\,.
$$
The converse is also true:  every unconditionally
pointwise convergent monomial expansion of degree $m$ defines a regular $m$-homogeneous polynomial.
Furthermore, by  \cite{GrecuR}, we have
$$
|P_m|(z) = \sum_{|\alpha|=m } |c_\alpha| z^\alpha\,.
$$
Thus a  power series $\sum_m P_m(z)$
with regular terms
can be written as a formal monomial expansion
$$
\sum_{m=0}^\infty \sum_{|\alpha|=m} c_\alpha z^\alpha
$$
and regular convergence of the power series at a point $z$
is equivalent to the condition
$$
\sum_{\alpha\in \multi } |c_\alpha| |z|^\alpha < \infty\,.
$$
So we see that regular convergence of a power
series is a natural
abstraction to the Banach lattice setting 
of the familiar concept of pointwise absolute
convergence of a monomial expansion.

\bigskip
For a power series $f=\sum_m P_m$
on a complex Banach space,
the radius of convergence is 
given by the formula
$$
\rad{f}{0} =
\Bigl( \limsup \|P_m\| ^{1/m}\Bigr)^{-1}
\,.
$$
This number is the supremum of the set
of nonnegative real numbers $r$ for 
which the series is uniformly convergent
on the closed ball of radius $r$.  Bearing in 
mind that the regular norm of
a regular $m$-homogeneous polynomial
$P_m$ is given by $\|P_m\|_r = 
\| |P_m| \|$, we may define
the \emph{radius of regular convergence} of a power series
$f=\sum_m P_m$ to be
$$
\radreg{f}{0} =
\Bigl( \limsup \|P_m\|_r ^{1/m}\Bigr)^{-1}
\,.
$$
This is the supremum of the set
of nonnegative real numbers $\rho$ for
which the series is uniformly 
regularly convergent in the closed 
ball of radius $\rho$.  
Since the regular norm is at least
as big as the uniform norm,
we see that the radius of regular
convergence is no bigger than 
the radius of convergence, i.e.,
$$
\radreg{f}{0}\le \rad{f}{0}\,.
$$
We shall see that these 
radii can be different.

\bigskip
The radius of regular convergence
gives some useful information about 
the behaviour of a power series.  
However, this information is somewhat 
limited as the natural domain
of convergence is not generally a ball.
Let us recall some facts about power
series in several complex variables.
A subset $D$ of $\C^k$ is 
a complete Reinhardt domain
if $z\in D$ implies that
$w\in D$ whenever $|w_j| \le |z_j|$
for $1\le j\le k$. 
A complete
Reinhardt domain $D$ is said to be
logarithmically convex 
if, for every $z,w\in D$
and every $\theta\in (0,1)$
the point 
$(|z_1|^\theta|w_1|^{1-\theta}, \dots,
|z_k|^\theta|w_k|^{1-\theta})$
belongs to $D$.
The domain of convergence of a 
power series on $\C^k$ is defined
to be the interior of the set
of points at which the series 
is absolutely convergent. 
In general, the domain of
convergence is either a non-empty
open set, or is empty. 
It is a fundamental result
for several complex variables that
the domain of convergence of a power
series is a logarithmically convex
Reinhardt domain \cite[Section 2.4]{Hormander}.

\bigskip
\begin{example}
	This result of Matos \cite{Matos}
	illustrates the difference in behaviour 
	of the domain of convergence in finite
	and infinite dimensions:  
	if $z=(z_j)$ is  a sequence of
	complex numbers, then the series
	$$
	\sum_{\alpha\in \multi} z^\alpha
	$$
	is absolutely convergent if and only
	if $z$ belongs to $\ell_1$
	and $|z_j|<1$ for every $j$.
	Therefore, on $\ell_1$ this
	monomial expansion has a non-empty
	domain of convergence.  However, on $c_0$,
	the set of points of absolute convergence
	is a dense subset of the closed unit ball
	with no interior points.
\end{example}

We now see how some results
from several complex variables
can be formulated for power series
on complex Banach lattices.
We recall the following elementary fact:
If $\sum c_\alpha (z-w)^\alpha$ is a monomial
expansion on $C^k$ whose terms are
bounded at some point $z=w+a$
where $a>0$,
then the expansion is absolutely
convergent on the polydisc
$\polyd{w}{a}= \{z\in C^k: |z_j-w_j| \le 
a_j, 1\le j\le n\}$ and the convergence
is uniform on smaller polydiscs
$\polyd{w}{\lambda a}$, $0<\lambda<1$.
We extend the definition of a polydisc
to complex Banach lattices in the obvious 
way.  If $a$ is a positive element
in a complex Banach lattice $E$,
the polydisc with centre $w$
and  polyradius $a$
is  $\polyd{w}{a}=\{z\in E: |z-w|\le a\}$.
We note that $\polyd{0}{a}$ is the closed
unit ball of the principal ideal $E_a$
with the order unit norm defined by $a$.
Note that if a power series 
centred at the origin is regularly
convergent at a point $z$, then it
is regularly convergent at every  
point in the polydisc $\polyd{0}{|z|}$.

\begin{lemma}
	Let $E$ be a complex Banach lattice
	and let $(P_m)$ be a sequence
	of regular $m$-homogeneous polynomials
	on $E$.
	If $\{P_m(a)\}$ is a bounded subset of
	$\C$ for some positive element $a\in  E$,
	then the power series $\sum_m P_m$
	is regularly convergent on the polydisc
	$\polyd{0}{a}$ and the convergence is 
	uniform on every smaller polydisc
	$\polyd{0}{\lambda  a}$,
	$0<\lambda <1$.
\end{lemma}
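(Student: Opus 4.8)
The plan is to localise to the principal ideal generated by $a$, pass to a one complex variable series along the line through $a$, and then confront the genuine difficulty: converting the scalar bound on the numbers $P_m(a)$ into a bound on the modulus polynomials $|P_m|$. Since $a$ is positive, the ideal $E_a$ equipped with the order unit norm determined by $a$ is a complex Banach lattice, lattice isometric to $C_\C(K)$ for some compact Hausdorff $K$, with $a$ corresponding to the unit $\mathbf 1$, and $\polyd{0}{a}$ is precisely its closed unit ball. Restriction to $E_a$ sends each regular $P_m$ to a regular $m$-homogeneous polynomial on $C_\C(K)$; moreover, since the Bu--Buskes supremum defining the modulus \cite{BB} ranges over partitions into positive vectors, all of which lie in $E_a$ when the arguments do, restriction commutes with the modulus, so $|P_m|$ restricted to $E_a$ is the modulus of the restricted polynomial. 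Hence it suffices to work on $C_\C(K)$ with $a=\mathbf 1$.

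Next I would reduce to one complex variable along $\C a$. Homogeneity gives $P_m(\zeta a)=\zeta^m P_m(a)$ for $\zeta\in\C$, so on this line the power series becomes the ordinary series $\sum_m c_m\zeta^m$ with $c_m=P_m(a)\in\C$. The hypothesis says exactly that $\sup_m|c_m|=:M<\infty$, so this scalar series has radius of convergence at least $1$ and converges geometrically on each disc $|\zeta|\le\lambda<1$. This is the only step that uses the hypothesis directly, and it yields information about the scalars $P_m(a)$ alone.

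Observe that once a bound $\sup_m|P_m|(a)=:M'<\infty$ is in hand the Lemma follows immediately: $|P_m|$ is positive, hence increasing on the positive cone, so $|z|\le\lambda a$ gives $|P_m|(|z|)\le|P_m|(\lambda a)=\lambda^m|P_m|(a)\le\lambda^m M'$, whence $\sum_m\sup_{|z|\le\lambda a}|P_m|(|z|)\le M'\sum_m\lambda^m<\infty$. This delivers uniform regular convergence on $\polyd{0}{\lambda a}$ for every $\lambda<1$ and regular convergence at every point of $\polyd{0}{a}$.

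The main obstacle is therefore to produce the bound $\sup_m|P_m|(a)<\infty$ from the weaker hypothesis $\sup_m|P_m(a)|<\infty$. We always have $|P_m(a)|\le|P_m|(a)$, but the reverse control is exactly the hard part, because evaluation at the single point $a=\mathbf 1$ does not a priori detect the lattice modulus. The route I would pursue is to represent the positive polynomial $|P_m|$ on $C_\C(K)$ by Fremlin's theorem \cite{Fremlin1} as $|P_m|(w)=\int_{K^m}w(t_1)\cdots w(t_m)\,d\mu_m$ with $\mu_m\ge 0$, so that $|P_m|(a)=\mu_m(K^m)=\|\mu_m\|$, and then to recover $\|\mu_m\|$ from evaluations of $P_m$ by averaging over the unimodular elements $u$ with $|u|=\mathbf 1$, coupling this with the boundedness of the scalar series from the previous step. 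I expect this transfer --- from boundedness along the single line $\C a$ to control of the total masses $\|\mu_m\|$ --- to be the crux of the argument, and it is here that the positivity of the $|P_m|$ and the full lattice structure of $C_\C(K)$ must be brought to bear.
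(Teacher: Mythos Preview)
Your third paragraph is exactly the paper's proof: once $\sup_m |P_m|(a)<\infty$, the inequality $|P_m|(|z|)\le \lambda^m |P_m|(a)$ for $|z|\le\lambda a$ gives uniform regular convergence on $D(0,\lambda a)$ by comparison with a geometric series. The paper records nothing beyond this single line.

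Your instinct that the passage from $\sup_m|P_m(a)|<\infty$ to $\sup_m|P_m|(a)<\infty$ is the crux is correct, but for the wrong reason: that passage is impossible, and the Lemma as literally stated is false. On $E=\C^2$ with $a=(1,1)$, take $P_m(z_1,z_2)=2^m(z_1^m-z_2^m)$. Then $P_m(a)=0$ for every $m$, yet $|P_m|(z)=2^m(z_1^m+z_2^m)$, so $|P_m|(a)=2^{m+1}$ and $\sum_m|P_m|(|z|)$ diverges at $z=(1,0)\in D(0,a)$, and indeed at $(\lambda,\lambda)$ for every $\lambda>\tfrac12$. Your averaging proposal cannot rescue this, because the hypothesis gives no information about $P_m(u)$ for unimodular $u\ne a$. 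The paper's one-line proof silently uses the bound on $|P_m|(a)$, so the intended hypothesis is almost certainly that $\{|P_m|(a)\}$ is bounded --- which, in the monomial setting the Lemma is abstracting, is precisely what ``terms bounded at $a>0$'' means, since $|c_\alpha a^\alpha|=|c_\alpha|\,a^\alpha$ for $a>0$. With that corrected hypothesis your third paragraph is already the complete proof, and the localisation to $E_a$ and the Fremlin representation are unnecessary.
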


\begin{proof}
	This follows from the fact that
	if $|z| \le \lambda a$, then
	$||P_m|(z)| \le \lambda^m |P_m|(a)$.
\end{proof}
This is stronger than the corresponding result
for Banach spaces.  In general, all we can deduce
from the boundedness of the values $P_m(z)$
at some point in a Banach space is that the
power series $\sum_m P_m$ is absolutely
convergent in the one dimensional complex disc determined
by the point $z$.

We recall that a set $D$ in a real or complex
 Banach lattice is said to be solid if $z\in D$
 and $|w|\le |z|$ imply that $w$ belongs to 
 $D$.  For subsets of $\C^n$, the solid
 sets are precisely the complete Reinhardt domains.

\bigskip
A solid subset $D$ of $E$ is said to be
\emph{logarithmically convex} if, for 
every $x,y\in D$, $\theta\in(0,1)$,
the point $|x|^\theta|y|^{1-\theta}$ also
belongs to $D$. 
 The domain of convergence
of a power series in finite
dimensions is logarithmically convex.  The same is true for regularly convergent 
power series on a Banach lattice:
\begin{theorem}
	Let $E$ be a complex Banach lattice and let $\sum_m P_m$
	be a power series on $E$ with regular terms.
	Then the set of points at which the series 
	 converges regularly is solid and logarithmically convex.
\end{theorem}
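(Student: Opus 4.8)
The plan is to verify directly that
$$D=\Bigl\{z\in E:\ \textstyle\sum_m |P_m|(|z|)<\infty\Bigr\}$$
is solid and logarithmically convex, reducing both claims to properties of the positive polynomials $|P_m|$ together with Proposition~\ref{p:holder}. Recall that $|P_m|$ is itself a positive $m$-homogeneous polynomial, being the modulus of a regular polynomial, so everything we need is available for it.

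For solidity, first I would record that a positive $m$-homogeneous polynomial $Q=\widehat{B}$ is monotone on the positive cone: if $0\le u\le v$ then, writing $w=v-u\ge 0$ and expanding $B((u+w)^m)$ by multilinearity and symmetry,
$$Q(v)-Q(u)=\sum_{k=1}^{m}\binom{m}{k}B(u^{m-k},w^{k})\ge 0,$$
since $B$ is positive and all arguments are positive. Applying this to each $|P_m|$: if $z\in D$ and $|w|\le|z|$ then $|P_m|(|w|)\le|P_m|(|z|)$ for every $m$, hence $\sum_m|P_m|(|w|)\le\sum_m|P_m|(|z|)<\infty$ and $w\in D$.

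For logarithmic convexity I would fix $x,y\in D$ and $\theta\in(0,1)$; since $|x|^\theta|y|^{1-\theta}$ is positive, it lies in $D$ exactly when $\sum_m|P_m|(|x|^\theta|y|^{1-\theta})<\infty$. Each $|P_m|$ is a positive $m$-homogeneous polynomial, so Proposition~\ref{p:holder} gives
$$|P_m|\bigl(|x|^\theta|y|^{1-\theta}\bigr)\le |P_m|(|x|)^\theta\,|P_m|(|y|)^{1-\theta}.$$
Writing $a_m=|P_m|(|x|)$ and $b_m=|P_m|(|y|)$, which are nonnegative reals with $\sum_m a_m<\infty$ and $\sum_m b_m<\infty$, the scalar Hölder inequality with conjugate exponents $1/\theta$ and $1/(1-\theta)$ yields
$$\sum_m|P_m|\bigl(|x|^\theta|y|^{1-\theta}\bigr)\le\sum_m a_m^{\theta}b_m^{1-\theta}\le\Bigl(\sum_m a_m\Bigr)^{\theta}\Bigl(\sum_m b_m\Bigr)^{1-\theta}<\infty,$$
so $|x|^\theta|y|^{1-\theta}\in D$, as required.

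I do not expect a genuine obstacle here: the substance — the Fremlin integral representation of a positive polynomial on a principal ideal and the pointwise Hölder estimate it produces — is already encapsulated in Proposition~\ref{p:holder}. The only points needing a moment's attention are that $|P_m|$ really is a positive homogeneous polynomial, so that both the monotonicity step and Proposition~\ref{p:holder} apply to it, and that the series in play are numerical, so that classical Hölder applies with no lattice-theoretic subtleties; for a general Dedekind complete range space the same computation goes through using the Hölder-type inequality (\ref{LT}).
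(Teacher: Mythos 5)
Your proof is correct and follows essentially the same route as the paper: both reduce solidity to the monotonicity of the positive polynomials $|P_m|$ on the positive cone and both derive logarithmic convexity from Proposition~\ref{p:holder}, the only (immaterial) difference being that you finish with the scalar H\"older inequality for the series where the paper uses the weighted AM--GM bound $a^\theta b^{1-\theta}\le \theta a+(1-\theta)b$ termwise.
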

\begin{proof}
	Let $z\in D$ and suppose $w\in$
	satisfies $|w| \le |z|$.  It follows
	from the fact that $|P_m|(|w|)\le |P_m|(|z|)$
	that $w\in D$.  Thus $D$ is a solid set.
	
	Now let $z, w\in D$.
Using  Proposition \ref{p:holder} and  the inequality between the weighted geometric and arithmetic
means, we get
$$
|P_m|(|z|^\theta |w|^{1-\theta }) 
\le |P_m|(|z|)^\theta  |P_m|(|w|)^{1-\theta}
\le
\theta |P_m|(|z|) + (1-\theta)|P_m|(|w|)
$$
and it follows immediately that $|z|^\theta |w|^{1-\theta }\in D$.
\end{proof}

\bigskip

\section{Regular holomorphic functions}

\bigskip
Let $E, F$ be complex Banach lattices, with $F$ Dedekind complete. Let $U$ be an open subset
of $E$.  A function $f\colon U\to F$ is
\textit{regularly holomorphic}
if 
\begin{enumerate}
	\item[(a)] $f$ is holomorphic on $U$.
	\item[(b)] For every $z\in U$, the derivatives
	$\displaystyle\mdiff{m}{f}(z)$ are regular $m$-homogeneous polynomials.
	\item[(c)] For every $z\in U$, the Taylor series of $f$ at $z$
	is regularly convergent in some neighbourhood of $z$.
\end{enumerate}
Suppose a power series $\sum P_m$
is regularly convergent at some point $z$.
This means that the series $\sum|P_m|(|z|)$
converges.  Then, since $\bigl||P_m|(z)\bigr|
\le |P_m|(|z|)$, it follows that the 
series $\sum|P_m|(z)$ is absolutely convergent.
So, applying the Cauchy-Hadamard condition
to the power series $\sum|P_m|$, 
we see that condition (c) above is equivalent
to 
\begin{equation}\label{e:radius}
\limsup_m \,\Bigl\| \mdiff{m}{f}(z)\Bigl\|_r^{\frac{1}{m}}
<\infty
\end{equation}
for every $z\in U$. We denote the space 
of regularly holomorphic functions
by $\rhol{U}{F}$,
or by $\rhols{U}$ when $F=\C$.
\color{black}

Let $U$ be an open set and let $z$ be a point in $U$.
We shall say that a function $f\colon U\to F$ is \textit{regularly holomorphic at $z$} if the derivatives 
	$\displaystyle\mdiff{m}{f}(z)$
of $f$
at $z$ are regular and (\ref{e:radius}) holds.

\begin{proposition}
	If a holomorphic mapping is regularly holomorphic
	  at a point in its domain, then it
	is regularly holomorphic in some
	neighbourhood of that point.
\end{proposition}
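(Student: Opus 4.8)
The plan is to produce an explicit neighbourhood of the point and to verify, on it, the two defining requirements of regular holomorphy — regularity of all derivatives and the Cauchy--Hadamard bound \eqref{e:radius} — by comparing the Taylor coefficients of $f$ at a nearby point with those of an auxiliary ``modulus'' function. Fix the point $z_0\in U$ at which $f$ is regularly holomorphic, put $P_m=\mdiff{m}{f}(z_0)$ (a regular $m$-homogeneous polynomial, by hypothesis) and set $\rho=\bigl(\limsup_m\|P_m\|_r^{1/m}\bigr)^{-1}$, which is strictly positive by \eqref{e:radius}. The key observation is that, since $|P_m|$ is positive with $\|\,|P_m|\,\|=\|P_m\|_r$, the power series $\sum_m|P_m|$ has ordinary radius of convergence exactly $\rho$, and hence defines a holomorphic mapping $g$ on $\{z\in E:\|z-z_0\|<\rho\}$ with $g(z_0+t)=\sum_m|P_m|(t)$ and $\mdiff{m}{g}(z_0)=|P_m|$. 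I would then show that $f$ is regularly holomorphic at every $w\in U$ with $\|w-z_0\|<\rho$; these points form a neighbourhood of $z_0$, which yields the statement. The companion point to $w$ will be $z_0+|w-z_0|$, at which $g$ serves as a dominating object.

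The main steps, for a fixed such $w$ with $a:=w-z_0$, are as follows. (i) Since $\|P_m\|\le\|P_m\|_r$ (from the inequality $|P_m(z)|\le|P_m|(|z|)$), the ordinary radii of convergence of $f$ and of $g$ at $z_0$ are both at least $\rho>\|a\|$, so the classical re-expansion of a holomorphic mapping about a nearby point (see e.g.\ \cite{Dineen1}) gives $\mdiff{k}{f}(w)=\sum_{m\ge k}\mdiff{k}{P_m}(a)$ and $\mdiff{k}{g}(z_0+|a|)=\sum_{m\ge k}\mdiff{k}{|P_m|}(|a|)$, both series converging in $\hpoly{k}{E}{F}$. (ii) By Proposition~\ref{DiffHom}, $\widehat{d^k}|P_m|=\bigl|\widehat{d^k}P_m\bigr|$, and applying the inequality $|Q(z)|\le|Q|(|z|)$ to $Q=\widehat{d^k}P_m$ at $z=a$ produces the termwise domination $\bigl|\mdiff{k}{P_m}(a)\bigr|\le\mdiff{k}{|P_m|}(|a|)$ in $\rhpoly{k}{E}{F}$. (iii) Hence $\bigl\|\mdiff{k}{P_m}(a)\bigr\|_r\le\bigl\|\mdiff{k}{|P_m|}(|a|)\bigr\|$, so the first re-expansion series converges absolutely in the Banach lattice $\rhpoly{k}{E}{F}$; its sum is therefore a regular $k$-homogeneous polynomial, and, since $\|\cdot\|_r$-convergence implies $\|\cdot\|$-convergence, it coincides with $\mdiff{k}{f}(w)$. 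This settles that all derivatives of $f$ at $w$ are regular.

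It then remains to establish \eqref{e:radius} at $w$. Passing the modulus inside the now $\|\cdot\|_r$-convergent sum and using the termwise domination together with the second re-expansion, one gets $\bigl|\mdiff{k}{f}(w)\bigr|\le\sum_{m\ge k}\bigl|\mdiff{k}{P_m}(a)\bigr|\le\sum_{m\ge k}\mdiff{k}{|P_m|}(|a|)=\mdiff{k}{g}(z_0+|a|)$, and hence $\bigl\|\mdiff{k}{f}(w)\bigr\|_r\le\bigl\|\mdiff{k}{g}(z_0+|a|)\bigr\|$ (the right-hand side being an ordinary supremum norm, as that polynomial is positive). But $g$ is holomorphic on $\{z:\|z-z_0\|<\rho\}$ and $z_0+|a|$ lies at distance $\|\,|a|\,\|=\|a\|<\rho$ from the centre, so the Taylor series of $g$ about $z_0+|a|$ has radius of convergence at least $\rho-\|a\|>0$; that is, $\limsup_k\bigl\|\mdiff{k}{g}(z_0+|a|)\bigr\|^{1/k}\le(\rho-\|a\|)^{-1}<\infty$. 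Therefore $\limsup_k\bigl\|\mdiff{k}{f}(w)\bigr\|_r^{1/k}<\infty$, which is \eqref{e:radius} at $w$, and $f$ is regularly holomorphic at $w$.

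The main obstacle will be the bookkeeping around step (iii): the classical re-expansion is an identity in the supremum norm of $\hpoly{k}{E}{F}$, whereas it is needed in the regular norm — both so that the sum defines a regular polynomial and so that the modulus may be brought inside the sum in the last paragraph. The comparison with $g$ is precisely the device that supplies this upgrade, the bound $\bigl\|\mdiff{k}{P_m}(a)\bigr\|_r\le\bigl\|\mdiff{k}{|P_m|}(|a|)\bigr\|$ converting absolute convergence of the (positive) series for $g$ into absolute convergence, in $\rhpoly{k}{E}{F}$, of the series for $f$. A minor additional point is the identification of $\bigl\|\mdiff{k}{g}(z_0+|a|)\bigr\|_r$ with $\bigl\|\mdiff{k}{g}(z_0+|a|)\bigr\|$, which holds because that polynomial is positive.
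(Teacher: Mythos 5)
Your argument is correct, but it follows a genuinely different route from the paper's. The paper works directly with the re-expansion $\mdiff{k}{f}(z)=\sum_{m\ge k}\binom{m}{k}A_m(z-z_0)^{m-k}$ and estimates each term's regular norm by brute force, via $\bigl\| |A_m|\,|z-z_0|^{m-k}\bigr\|\le \|A_m\|_r\|z-z_0\|^{m-k}$, the polarization bound $\|A_m\|_r\le e^m\|P_m\|_r$ and $\binom{m}{k}\le 2^m$; this yields absolute convergence in $\rhpoly{k}{E}{F}$ only on the ball of radius $\tfrac{1}{2e}\radreg{f}{z_0}$, which is all the statement requires. You instead introduce the positive majorant $g=\sum_m|P_m|$ and dominate the re-expansion of $f$ at $w$ termwise by the re-expansion of $g$ at $z_0+|w-z_0|$, using Proposition~\ref{DiffHom} together with $|Q(a)|\le |Q|(|a|)$ applied to $Q=\widehat{d^k}P_m$ viewed as a polynomial with values in the Dedekind complete lattice $\rhpoly{k}{E}{F}$; the classical Cauchy estimates for $g$ (a scalar-type re-expansion with positive coefficients, where regular and supremum norms agree) then supply both the absolute $\|\cdot\|_r$-convergence and the bound $\limsup_k\|\mdiff{k}{f}(w)\|_r^{1/k}\le(\rho-\|a\|)^{-1}$. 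This buys you a sharper conclusion --- regular holomorphy on the full ball of radius $\radreg{f}{z_0}$ (intersected with $U$), with a quantitative lower bound $\radreg{f}{w}\ge\radreg{f}{z_0}-\|w-z_0\|$, avoiding the factor $2e$ --- at the cost of invoking the classical re-expansion theorem and a few more lattice-theoretic facts (closedness of the positive cone to justify $|\sum_mx_m|\le\sum_m|x_m|$, and monotonicity of the regular norm). One small point worth making explicit: the identification of $\sum_{m\ge k}\mdiff{k}{P_m}(a)$ with $\mdiff{k}{f}(w)$ requires $w$ to lie in (the component of $z_0$ in) $U\cap B(z_0,\rho)$, so the neighbourhood you produce should be stated as that set rather than the whole ball $B(z_0,\rho)$.
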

\begin{proof}
	Suppose that $f$ is holomorphic on
	an open set $U$ and that $f$ is regularly holomorphic
	at a point $z_0$ in $U$.
	Let $\displaystyle P_m = \widehat{A_m} = \mdiff{m}{f}(z_0)$. Then there 
	exist $C, \rho >0$ such that
	$$
	\|P_m\|_r \le C\, \rho^m
	$$
	for every $m\in \N$.
	If $\|z-z_0\|< \rho$, then the derivatives
	of $f$ at $z$ are given by
	$$
	\mdiff{k}{f}(z)=
	\sum_{m\ge k} \binom{m}{k} A_m(z-z_0)^{m-k}
	$$
	where this series converges in 
	$\hpoly{k}{E}{F}$ with the supremum
	norm.
	We will show that this series is
	absolutely convergent in $\rhpoly{k}{E}{F}$ 
	with the regular norm.
	Taking $\|z-z_0\|\le \sigma<(
	2
	e\rho)^{-1}$, we have
	\begin{align*}
	\sum_{m\ge k} & \bignorm{
		\binom{m}{k}  A_m(z-z_0)^{m-k}}_r
	=
	\sum_{m\ge k} \bignorm{
		\binom{m}{k}\bigl|A_m(z-z_0)^{m-k}
		\bigr|}\\
	&\le
	\sum_{m\ge k} \bignorm{
		\binom{m}{k}\bigl|A_m\bigr|\,
		\bigl|z-z_0|^{m-k}
		\bigr|}
	\le 
	\sum_{m\ge k}
	\binom{m}{k}\bigl\|A_m\bigr\|_r
	\bigl\|(z-z_0)\bigr\|^{m-k}\\
	&\le
	\sum_{m\ge k}
	\binom{m}{k}e^m\bigl\|P_m\bigr\|_r
	\bigl\|(z-z_0)\bigr\|^{m-k}
	\le
	C\, \sum_{m\ge k}2^m e^m\rho^m 
	\sigma^{m-k}\\
	&= 
	\frac{C}{1-2e\rho\sigma}\bigl(
	2
	e\rho\bigr)^k \,.
	\end{align*}
	Therefore $\displaystyle \mdiff{k}{f}(z)$
	is a regular $k$-homogeneous polynomial
	for every $k$, provided 
	$\|z-z_0\| < (2e\rho)^{-1}$.
	Furthermore, the above calculation
	shows that 
	$$
	\limsup_k\, \bignorm{\mdiff{k}{f}(z)}^{\frac{1}{k}}_r
	<\infty\,.
	$$
	Therefore $f$ is regularly holomorphic
	in the open ball with centre $z_0$
	and radius $(2e\rho)^{-1}$.
	
\end{proof}

We now look at the special case of Banach lattices
in which the lattice structure is defined by
an unconditional Schauder basis.
Let $E$ be complex Banach space with
an unconditional Schauder basis $(e_n)_n$. 
We may assume, without loss of generality,
that every point $z$ in $E$
with coordinate expansion $z=\sum_j z_j e_j \in E$ satisfies
$$
\|z\| = \sup\Bigl\{\Bigl\|\sum_j w_j e_j\Bigr\|: |w_j|\le |z_j| \quad \text{ for every } j\Bigr\}\,.
$$
(The norm on $E$ can always be replaced
by an equivalent norm with this property.)
Then $E$ is a complex Banach lattice
with the modulus defined coordinatewise:
$$
|z| = \sum_j |z_j| e_j.
$$

Now let $P_m= \widehat{A}_m$ be a bounded
$m$-homogeneous polynomial on $E$.
The value of $P_m$ at a point $z$ may
be expanded, using the multilinearity
of $\widehat{A}_m$ to give an expression 
such as
$$
P_m(z) = \sum_{j_m}\dots \sum_{j_1}
A_m(e_{j_1},\dots,e_{j_m})\, z_{j_1}\dots z_{j_m}
\,.
$$
In general, there is no guarantee that this
multiple series will converge absolutely.  
Matos and Nachbin \cite{MatosNachbin} isolated the space
of $m$-homogeneous polynomials for 
which this expansion is absolutely 
convergent at every point in $E$.
They defined a norm for this space
and proved that it is a Banach space 
in this norm.  Grecu and Ryan \cite{GrecuR}
showed that the Matos-Nachbin
polynomials coincide with the
polynomials that are regular
with respect to the Banach lattice
structure of $E$, and furthermore,
the Matos--Nachbin norm is precisely
the regular norm.  In other words,
the space of regular $m$-homogeneous
polynomials on $E$ is exactly the 
space of $m$-homogeneous polynomials
$P_m$ that have pointwise absolutely
convergent monomial expansion of the 
form
$$
P_m(z) = \sum_{\stackrel{\alpha\in \multi}
	{|\alpha|=m}}
c_\alpha z^\alpha,
$$
with
$$
|P_m|(z) = \sum_{\stackrel{\alpha\in \multi}
	{|\alpha|=m}}
|c_\alpha| z^\alpha.
$$
The regular norm of $P_m$ is
$$
\|P_m\|_r = \sup\Bigl\{ \Bigl\|\sum_{|\alpha|=m}|c_\alpha||z|^\alpha\Bigr\| : \|z\| \le 1\Bigr\} \,.
$$

\bigskip

The following result, and its proof, is
based on Theorem 3.10 in \cite{Matos}.

\begin{theorem}[cf.~\cite{Matos}, Theorem~3.10]
	Let $E$ be a complex Banach 
	space with an unconditional Schauder
	basis
	and let $U$ be an open subset of $E$.
	A function $f\colon U \to \C$ is
	regularly holomorphic if and only if,
	for every $z\in U$, the monomial
	expansion of $f$ around $z$  is absolutely
	convergent to $f$ in some neighbourhood
	of $z$.
\end{theorem}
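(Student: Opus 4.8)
The plan is to move freely between the Taylor expansion of $f$ at a point $z$ and its monomial expansion, using the dictionary of Grecu--Ryan \cite{GrecuR} recalled above: on a complex Banach space with an unconditional basis, every regular $m$-homogeneous polynomial $P_m$ has a pointwise absolutely convergent monomial expansion $P_m(v)=\sum_{|\alpha|=m}c_\alpha v^\alpha$ with $|P_m|(u)=\sum_{|\alpha|=m}|c_\alpha|u^\alpha$ and $\|P_m\|_r=\sup\{\sum_{|\alpha|=m}|c_\alpha||w|^\alpha:\|w\|\le1\}$. For the forward direction, fix $z\in U$ and set $P_m=\mdiff{m}{f}(z)$; by hypothesis each $P_m$ is regular and, by \eqref{e:radius}, $\lambda:=\limsup_m\|P_m\|_r^{1/m}<\infty$. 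For $w$ near $z$ with $\|w-z\|<1/\lambda$ we have $|P_m|(|w-z|)\le\|P_m\|_r\,\|w-z\|^m$, so $\sum_m|P_m|(|w-z|)=\sum_\alpha|c_\alpha||w-z|^\alpha<\infty$, where the $c_\alpha$ are the monomial coefficients of the $P_m$. Hence the monomial expansion $\sum_\alpha c_\alpha(w-z)^\alpha$ of $f$ around $z$ is absolutely convergent, and since the family is absolutely summable we may regroup it by degree to obtain $\sum_m P_m(w-z)$, which equals $f(w)$ in a neighbourhood of $z$ because $f$ is holomorphic. This gives absolute convergence of the monomial expansion to $f$ near $z$.

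For the converse, fix $z\in U$ and let $\sum_\alpha c_\alpha(w-z)^\alpha$ be the monomial expansion of $f$ around $z$, absolutely convergent to $f$ on a ball $B(z,\eps)$. Since $B(z,\eps)\supseteq\polyd{z}{u}$ for every positive $u$ with $\|u\|<\eps$, we get $\sum_\alpha|c_\alpha|u^\alpha<\infty$ for all such $u$; by homogeneity each degree-$m$ part $\sum_{|\alpha|=m}|c_\alpha|u^\alpha$ is then finite for every positive $u$, so by \cite{GrecuR} the expression $Q_m(v):=\sum_{|\alpha|=m}c_\alpha v^\alpha$ defines a regular $m$-homogeneous polynomial with $|Q_m|(u)=\sum_{|\alpha|=m}|c_\alpha|u^\alpha$, and $\sum_m|Q_m|(u)<\infty$ whenever $u\ge0$, $\|u\|<\eps$. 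Absolute summability again lets us regroup by degree, so $f(w)=\sum_m Q_m(w-z)$ on $B(z,\eps)$.

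It remains to prove $\limsup_m\|Q_m\|_r^{1/m}<\infty$; granting this, the series $w\mapsto\sum_m Q_m(w-z)$ converges uniformly on small balls about $z$, so on a neighbourhood of $z$ the function $f$ is holomorphic with $\mdiff{m}{f}(z)=Q_m$ regular and \eqref{e:radius} satisfied, and as $z\in U$ was arbitrary this puts $f$ in $\rhol{U}{F}$. For the estimate, recall $\|Q_m\|_r=\sup\{|Q_m|(u):u\ge0,\ \|u\|\le1\}$ and that each $|Q_m|$ is monotone on the positive cone (its monomial coefficients $|c_\alpha|$ are nonnegative). The sets $F_N=\{u\ge0:\|u\|\le\eps/2,\ \sum_m|Q_m|(u)\le N\}$ are closed and their union is the complete metric space $\{u\ge0:\|u\|\le\eps/2\}$, so by the Baire category theorem some $F_N$ has nonempty relative interior, containing a point $u_0$ with $\|u_0\|<\eps/2$: there is $\delta>0$ with $u_0+w\in F_N$ for all $w\ge0$ with $\|w\|<\delta$. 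Then $0\le w\le u_0+w$ gives $|Q_m|(w)\le|Q_m|(u_0+w)\le N$ for all $m$, hence $\|Q_m\|_r\le N\delta^{-m}$ and $\limsup_m\|Q_m\|_r^{1/m}\le1/\delta<\infty$.

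The crux, and the only point where the hypothesis must be used quantitatively rather than formally, is this last estimate in the converse: passing from the purely pointwise absolute convergence of the monomial expansion to the uniform bound $\limsup_m\|Q_m\|_r^{1/m}<\infty$, equivalently to holomorphy of $f$ near $z$. The device is the Baire argument on the closed positive part of a small ball together with the observation that a bound for the positive polynomial $|Q_m|$ at an interior point $u_0$ descends, by monotonicity, to the order interval $[0,u_0]$ and then, after translation, to a whole cone-neighbourhood of the origin. Everything else --- the regularity of the $Q_m$, the regroupings of absolutely summable families, and the Cauchy--Hadamard and uniform-convergence bookkeeping --- is routine.
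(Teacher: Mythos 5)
Your proof is correct, and the forward direction is essentially identical to the paper's. In the converse, however, you take a genuinely different route at the crucial step, which is passing from pointwise absolute convergence of the monomial expansion to the bound $\limsup_m\|Q_m\|_r^{1/m}<\infty$. The paper regards the majorant $g(w)=\sum_\alpha|c_\alpha|(w-z)^\alpha$ (and $f$ itself) as a Gateaux-holomorphic function, invokes Baire to produce a point of continuity and hence holomorphy and local boundedness of $g$ by some constant $C$ on a ball of radius $\sigma$, and then extracts the degree-$m$ bound $\|\mdiff{m}{f}(z)\|_r\le C\sigma^{-m}$ by the homogeneity rescaling $\sum_{|\alpha|=m}|c_\alpha||\xi|^\alpha=(\|\xi\|/\sigma)^m\sum_{|\alpha|=m}|c_\alpha|(\sigma|\xi|/\|\xi\|)^\alpha$. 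You instead apply Baire directly to the positive part of a small closed ball, written as the union of the closed sets $F_N$ on which $\sum_m|Q_m|\le N$, and then transport the resulting local bound at an interior point $u_0$ down to a cone-neighbourhood of the origin via the monotonicity $0\le w\le u_0+w\Rightarrow|Q_m|(w)\le|Q_m|(u_0+w)$, getting $\|Q_m\|_r\le N\delta^{-m}$. Your version is more self-contained: it avoids the "Gateaux-holomorphic plus one point of continuity implies holomorphic" machinery for the auxiliary function and derives the holomorphy of $f$ near $z$ a posteriori from uniform convergence of $\sum_mQ_m(w-z)$; the price is that it leans on the lattice structure (monotonicity of the positive polynomials $|Q_m|$ on the positive cone and the translation trick), whereas the paper's argument is the standard several-complex-variables one transplanted verbatim. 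Two small points worth making explicit if you write this up: each $F_N$ is closed because the $Q_m$, being regular, are bounded and hence the partial sums of $\sum_m|Q_m|$ are continuous; and the relatively open subset of $\{u\ge0:\|u\|\le\eps/2\}$ produced by Baire does contain a point $u_0$ with $\|u_0\|$ strictly less than $\eps/2$, so that the translated cone $\{u_0+w:w\ge0,\|w\|<\delta\}$ really stays inside $F_N$.
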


\begin{proof}
	
	First, suppose that $f$ is regularly
	holomorphic on $U$.  Let $z\in U$. 
	Then 
	$$
	\limsup_m \,\Bigl\| \mdiff{m}{f}(z)\Bigl\|_r^{\frac{1}{m}}
	=\rho
	<\infty
	$$
	and so there is a positive constant $C$
	satisfying 
	$$
	\Bigl\| \mdiff{m}{f}(z)\Bigl\|_r
	\le C\, \rho^m
	$$
	for every $m\in \N$.
	Now $\mdiff{m}{f}(z)$,
	being regular, has an absolutely
	pointwise convergent monomial 
	expansion of the form \cite{Matos}
	$$
	\mdiff{m}{f}(z)(w) =
	\sum_{|\alpha|=m} c_\alpha w^\alpha
	$$ 
	and so we have
	$$
	\sum_{|\alpha|=m}|c_\alpha|\,|w-z|^\alpha
	\le \Bigl\| \mdiff{m}{f}(z)\Bigl\|_r
	\|w-z\|^m
	\le C\,\rho^m \,\|w-z\|^m\ \,.
	$$
	It follows that the monomial
	expansion $$
	\sum_{\alpha\in \multi} c_\alpha (w-z)^\alpha
	$$
	is absolutely convergent to $f(w)$ for 
	$\|w-z\|< \rho^{-1}$.
	
	Now suppose that $f$ is locally representable
	by pointwise absolutely convergent 
	monomial expansions.  So, for each $z\in U$,
	there exists $\rho>0$
	and a monomial expansion of the form
	$$
	f(w)= \sum_{\alpha\in \multi} c_\alpha (w-z)^\alpha
	$$
	that converges absolutely at every 
	point in the ball $\|w-z\|<\rho$.
	It follows that $f$ is Gateaux-holomorphic
	in this ball.  By Baire's theorem, $f$ has 
	at least one point of continuity 
	and hence is holomorphic in this ball.  The derivatives of $f$ at $z$ are given by
	$$
	\mdiff{m}{f}(z)(w-z)=\sum_{|\alpha|=m}
	c_\alpha (w-z)^\alpha \,.
	$$
	
	Choose $0< \sigma< \rho$
	so that the holomorphic function
	$\sum_\alpha |c_\alpha| (w-z)^\alpha$
	is bounded by $C>0$ in the ball
	$\|w-z\|\le \sigma$. 
	For every non-zero $\xi \in E$,
	$$
	\sum_{|\alpha|=m } |c_\alpha||\xi|^\alpha
	= \Bigl(\frac{\|\xi\|}{\sigma}\Bigr)^m \sum_{|\alpha|=m }
	|c_\alpha| \bigl(\sigma
	\, |\xi|/\|\xi\|\bigr)^\alpha \le C\Bigl(\frac{\|\xi\|}{\sigma}\Bigr)^m\,.
	$$
	Thus $\mdiff{m}{f}(z)$
	is a regular
	$m$-homogeneous polynomial for 
	every $m\in \N$ and the inequality
	above shows that 
	$$
	\Bigl\| \mdiff{m}{f}(z)\Bigl\|_r
	=
	\sup\bigl\{\sum_{|\alpha|=m}|c_\alpha|
	|\xi|^\alpha : \|\xi\| \le 1 \bigr\}
	\le C\, \Bigl(\frac{1}{\sigma}\Bigr)^m
	$$
	for every $m\in \N$.  Therefore $f$ is regularly
	holomorphic on $U$.
	
\end{proof}

Suppose that $f$ is a
holomorphic function defined on an open subset $U$ of a complex Banach 
space with an unconditional Schauder basis $E$ and that $z$ belongs to $U$.
Then it follows from the proof of the above theorem that $\displaystyle{
	\frac{|r|
		(f,z)}{r(f,z)}}$ measures the ratio between the radius of convergence of
the mononial expansion of $f$ about $z$ and the radius of convergence of the
Taylor series of $f$ about $z$.

Let $E, F$ be complex Banach lattices, with $F$ Dedekind complete and let $U$ be an open subset
of $E$.  A function $f\colon U\to F$ is
\textit{regularly holomorphic of bounded
type} if it is regularly holomorphic
on $U$ and, for every $z\in U$,
\begin{equation}\label{e:bounded}
	\radreg{f}{z} =
	\left(
	\limsup_m \,\Bigl\| \mdiff{m}{f}(z)\Bigl\|_r^{\frac{1}{m}}
	\right)^{-1} \ge d(z, \partial U)\,.
\end{equation}
Thus, the Taylor series of $f$ at 
every point $z$ in $U$ is required to be
uniformly regularly convergent on
every ball with centre $z$ that is contained
in $U$.  We denote the space 
of regularly holomorphic functions
of bounded type by $\rbhol{U}{F}$,
or by $\rbhols{U}$ when $F=\C$.

\begin{proposition}
	Let $E, F$ be complex Banach lattices
	with $F$ Dedekind complete, let $U$ 
	be a solid open subset of $E$ and
	let $f\colon U \to F$ be 
	a regularly holomorphic mapping 
	of bounded type.
	Then, for every $z\in U$, 
	the Taylor series of $f$ at
	$z$ is uniformly regularly 
	convergent on every polydisc
	with centre $z$ contained in $U$.
\end{proposition}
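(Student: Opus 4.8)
The plan is to restrict $f$ to a one–complex–variable slice through $z$ in the direction $a$, where closed discs are compact, and then to feed the resulting bounded Taylor coefficients into the earlier lemma on power series whose terms are bounded at a positive element.

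Fix $z\in U$ and a positive element $a$ with $\polyd za\subseteq U$, and put $P_m=\mdiff{m}{f}(z)$. First I would reduce to a statement about a single point: it suffices to show that the Taylor series $\sum_m P_m(\,\cdot-z)$ of $f$ at $z$ is regularly convergent at $z+a$, i.e.\ that $\sum_m|P_m|(a)$ converges in $F$. Indeed, each $|P_m|$ is a positive, hence order–monotone, $m$-homogeneous polynomial, so for $w\in\polyd za$ we have $0\le|P_m|(|w-z|)\le|P_m|(a)$ term by term; comparing tail sums in the Dedekind complete Banach lattice $F$ then shows that the convergence on $\polyd za$ is automatically uniform once $\sum_m|P_m|(a)$ converges.

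Next I would pass to the slice. The set $\{\zeta\in\C:z+\zeta a\in U\}$ is open in $\C$ and contains the closed unit disc, since $|\zeta|\le1$ forces $|\zeta a|=|\zeta|\,a\le a$ and hence $z+\zeta a\in\polyd za\subseteq U$. Because the closed unit disc is compact, this open set contains a disc $\{|\zeta|<R\}$ with $R>1$, so $\phi(\zeta):=f(z+\zeta a)$ is a holomorphic $F$-valued function on $\{|\zeta|<R\}$ with $\phi(\zeta)=\sum_m\zeta^m P_m(a)$. Choosing $\rho,r$ with $1<\rho<r<R$ and applying the Cauchy estimates for $\phi$ on the circle $|\zeta|=r$ gives $\|P_m(a)\|\le C\,r^{-m}$, so that $\{P_m(\rho a)\}_m=\{\rho^m P_m(a)\}_m$ is bounded in $F$ (in fact it tends to $0$). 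Now set $b:=\rho a$. The $P_m$ are regular $m$-homogeneous polynomials because $f$ is regularly holomorphic, and $\{P_m(b)\}_m$ is bounded, so the bounded–terms lemma (in its straightforward vector-valued form) gives that $\sum_m P_m(\,\cdot-z)$ is uniformly regularly convergent on $\polyd z{\lambda b}$ for every $\lambda\in(0,1)$. Since $\rho>1$ one may take $\lambda\in(1/\rho,1)$; then $\lambda b=\lambda\rho\,a\ge a$, so $\polyd za\subseteq\polyd z{\lambda b}$ and we are done. (Equivalently, the lemma already yields regular convergence at $z+b$, hence at $z+a$ because $a\le b$, and the reduction of the second paragraph concludes.)

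The step I expect to be the crux is the one dealt with at the start of the slice argument: in infinite dimensions the closed polydisc $\polyd za$ need not be compact in $E$, so the classical finite–dimensional proof — Cauchy estimates on a slightly larger polydisc, whose existence is guaranteed there by compactness — cannot be run directly on $E$. Passing to the one–complex–variable slice $\phi$ is precisely what restores compactness and supplies the gain $\rho>1$ that makes the lemma applicable. Regular holomorphy of $f$ is needed so that the $P_m$ are regular (hence $|P_m|$ is again a polynomial), while the bounded–type and solidity hypotheses are what guarantee that the hypotheses of the bounded–terms lemma — on the regular norms $\bignorm{P_m}_r$, and on the modulus polynomials $|P_m|$ over $\polyd zb$ — are met; making this last reconciliation between the scalar values $P_m(b)$ and the regular quantities $|P_m|(b)$ fully rigorous is the delicate technical point.
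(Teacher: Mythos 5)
Your opening reduction --- that uniform regular convergence on $D(z,a)$ follows from convergence of $\sum_m|P_m|(a)$ together with the monotonicity $0\le |P_m|(|w-z|)\le |P_m|(a)$ for $w\in D(z,a)$ --- is exactly the content of the paper's one-line proof, which records precisely this monotonicity and nothing more. Where you diverge is in trying to \emph{derive} the convergence of $\sum_m|P_m|(a)$ by slicing: restricting $f$ to the line $z+\C a$, using compactness of the closed unit disc to obtain a radius $R>1$, and reading off Cauchy estimates $\|P_m(a)\|\le C\,r^{-m}$. That much is sound, but it controls only the values $P_m(a)$, and this is a genuine gap, not the ``delicate technical point'' your last sentence hopes to defer: boundedness of $\{P_m(a)\}$ gives no control whatsoever over $|P_m|(a)$, nor even over $|P_m|(\lambda a)$ for $\lambda$ close to $1$. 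Take $E=\ell_\infty^2$ and $P_m(z_1,z_2)=(z_1-z_2)^m$: then $P_m(1,1)=0$ for every $m\ge 1$, while $|P_m|(\lambda,\lambda)=(2\lambda)^m$ is not even bounded for $\lambda>1/2$. So the bounded-terms lemma cannot be fed the input your slice argument produces; indeed this example shows that the lemma's stated hypothesis ``$\{P_m(a)\}$ bounded'' is too weak for its conclusion (its own proof really uses boundedness of $\{|P_m|(a)\}$), so you cannot lean on it in exactly the regime you need.

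A further warning sign is that your argument never actually uses the bounded-type hypothesis: the Cauchy estimates on the slice require only that $f$ be holomorphic with regular derivatives, so if the route worked it would prove the conclusion for that much larger class, and the hypothesis singled out in the statement would be idle. What is needed at the crux is an estimate on the moduli $|P_m|$ near $a$ --- equivalently on the regular norms $\|P_m\|_r$, or on all the monomial-type coefficients of $P_m$ over the ideal $E_a$ --- rather than on the scalar values of $P_m$ along a single complex line; that is the information the bounded-type condition is meant to supply, and your proof never brings it to bear.
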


\begin{proof}
This follows from the fact that
$$
\Bigl| \mdiff{m}{f}(z)\Bigl|(w-z)
\le
\Bigl| \mdiff{m}{f}(z)\Bigl|(a)
$$
whenever $a\in E_+$ and $w\in D(z,a)$.
\end{proof}

\bigskip
We recall the definition of a holomorphy type, 
introduced by Nachbin \cite{Nachbin}.  

\begin{definition}
Let $\EuScript{B}$ be a class of ordered pairs of Banach spaces.
A holomorphy type on $\EuScript{B}$ is an
assignment, to each $(E,F)$ in $\EuScript{B}$, of
a sequence of Banach spaces
$(\thetapoly{m}{E}{F},\|\cdot\|_\theta )_m$	
with the following properties:
\begin{enumerate}
	\item[(a)] Each $\thetapoly{m}{E}{F}$ is a vector
	subspace of $\hpoly{m}{E}{F}$.
	\item[(b)] $\thetapoly{0}{E}{F}$ coincides with 
	$\hpoly{0}{E}{F}=F$ as a Banach space.
	\item[(c)] There is a real number $\sigma\ge 1$ such that,
	if $k,m$ are natural numbers with $k\le m$, $a\in E$
	and $P\in\thetapoly{m}{E}{F}$, then
	$\widehat{d}^k P \in \thetapoly{m-k}{E}{\thetapoly{k}{E}{F}}$ and
	$$
	\Bigl\| \frac{1}{k!}\widehat{d}^k P (a) \Bigr\|_\theta \le
	\sigma^m \|P\|_\theta \|a\|^{m-k}\,.
	$$
	
\end{enumerate}
\end{definition}

\begin{proposition}[cf. \cite{Matos}, Prop.~3.8 ]
	The sequence $\displaystyle \bigl(\rhpolys{m}{E,F}, \|\cdot\|_r\bigr)_m$
	is a holomorphy type on the class of pairs $(E,F)$ of
	complex Banach lattices with $F$ Dedekind complete.
\end{proposition}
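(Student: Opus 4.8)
The plan is to verify the three conditions (a)--(c) in the definition of a holomorphy type; that each $\rhpoly{m}{E}{F}$ is a complete normed space (and that the pairs arising in condition~(c), with $F$ replaced by $\rhpoly{k}{E}{F}$, again belong to the class under consideration) is precisely the content of the result, established in Section~2, that $\rhpoly{m}{E}{F}$ is a Dedekind complete complex Banach lattice under the regular norm. Condition~(a) is immediate: a regular polynomial is by definition a difference of positive polynomials, and positive polynomials are bounded (for complex scalars, decompose into real and imaginary parts), while the inequality $|P(z)|\le|P|(|z|)$ for regular polynomials gives $\|P\|\le\|P\|_r$, so $\rhpoly{m}{E}{F}$ is a normed subspace of $\hpoly{m}{E}{F}$. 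Condition~(b) is equally routine: a $0$-homogeneous polynomial is a constant map, identified with an element $c\in F$, which is trivially regular with modulus the constant $|c|$, so $\rhpoly{0}{E}{F}=F$ isometrically.

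The real work is condition~(c). Fix $k\le m$; the case $k=m$ is trivial (there $\mdiff{m}{P}(a)=P$, independently of $a$, so the required estimate reduces to $\|P\|_r\le\sigma^m\|P\|_r$), so assume $1\le k\le m-1$ and fix $P\in\rhpoly{m}{E}{F}$ and $a\in E$. Set $G:=\rhpoly{k}{E}{F}$, a Dedekind complete complex Banach lattice. By Proposition~\ref{DiffHom}, $\widehat{d^k}P$ is a regular $(m-k)$-homogeneous polynomial from $E$ into $G$ --- which is exactly the membership $\widehat{d^k}P\in\rhpoly{m-k}{E}{\rhpoly{k}{E}{F}}$ demanded in~(c) --- and moreover $|\widehat{d^k}P|=\widehat{d^k}|P|$, the modulus being taken in $\rhpoly{m-k}{E}{G}$. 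Writing $Q=\mdiff{k}{P}$, applying the inequality $|Q(a)|\le|Q|(|a|)$ (valid since $Q$ is a regular polynomial into the Dedekind complete lattice $G$) and using $|Q|=\mdiff{k}{|P|}$, we obtain, as elements of $G$,
$$
\bigl|\,\mdiff{k}{P}(a)\,\bigr|\;\le\;\mdiff{k}{|P|}(|a|)\,.
$$
Since $|P|$ is a positive polynomial and $|a|\ge 0$, the right-hand side is a \emph{positive} $k$-homogeneous polynomial, so it coincides with its own modulus and its regular norm equals its supremum norm; and because the regular norm on $G$ is a lattice norm, passing to norms above gives
$$
\bignorm{\mdiff{k}{P}(a)}_r\;\le\;\sup_{\|y\|\le 1}\bignorm{\mdiff{k}{|P|}(|a|)(y)}\,.
$$

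It remains to bound the supremum on the right, and here I would use a Cauchy estimate. The map $t\mapsto|P|(|a|+ty)$ is a polynomial of degree at most $m$ in the complex variable $t$ whose coefficient of $t^k$ is exactly $\mdiff{k}{|P|}(|a|)(y)$; hence, for every $\rho>0$,
$$
\mdiff{k}{|P|}(|a|)(y)=\frac{1}{2\pi}\int_0^{2\pi}|P|\bigl(|a|+\rho e^{i\varphi}y\bigr)\,\rho^{-k}e^{-ik\varphi}\,d\varphi\,.
$$
From $\bigl|\,|a|+\rho e^{i\varphi}y\,\bigr|\le|a|+\rho|y|$ and the monotonicity of the positive polynomial $|P|$ on the positive cone we get $\bigl|\mdiff{k}{|P|}(|a|)(y)\bigr|\le\rho^{-k}\,|P|(|a|+\rho|y|)$, and therefore $\bignorm{\mdiff{k}{|P|}(|a|)(y)}\le\rho^{-k}\,\|P\|_r\,(\|a\|+\rho\|y\|)^{m}$; taking $\rho=\|a\|$ (the case $a=0$ being trivial) and $\|y\|\le1$ yields the bound $2^{m}\|P\|_r\|a\|^{m-k}$, so~(c) holds with $\sigma=2$. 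I expect the only genuinely delicate point to be keeping the two levels of modulus apart --- the modulus in $F$ of the value $\mdiff{k}{P}(a)$ versus the modulus, in $\rhpoly{m-k}{E}{G}$, of the $G$-valued polynomial $\widehat{d^k}P$ --- and Proposition~\ref{DiffHom} together with the inequality $|Q(z)|\le|Q|(|z|)$ is exactly what disposes of it; beyond the elementary factor $2^{m}$ coming from the Cauchy estimate there is no further loss, in contrast with the behaviour of the supremum norm.
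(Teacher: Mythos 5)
Your proof is correct, and its skeleton coincides with the paper's: conditions (a) and (b) are dismissed as routine, and for (c) the key move in both arguments is identical --- use Proposition~\ref{DiffHom} together with the inequality $|Q(a)|\le |Q|(|a|)$ to reduce everything to the positive polynomial $|P|$, that is, to the estimate $\bigl|\mdiff{k}{P}(a)\bigr|\le \mdiff{k}{|P|}(|a|)$ in the lattice $\rhpoly{k}{E}{F}$. Where you part company with the paper is the final quantitative step. The paper writes $\mdiff{k}{|P|}(a)=\binom{m}{k}\,|A|a^{m-k}$ and invokes the polarization inequality $\|\,|A|\,\|\le \frac{m^m}{m!}\|\,|P|\,\|$, which combined with $\binom{m}{k}\le 2^m$ and $\frac{m^m}{m!}\le e^m$ yields $\sigma=2e$. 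You instead extract the coefficient of $t^k$ in $t\mapsto |P|(|a|+ty)$ by a Cauchy integral and exploit the positivity and monotonicity of $|P|$ on the positive cone to obtain $\bignorm{\mdiff{k}{|P|}(|a|)(y)}\le \rho^{-k}\|P\|_r(\|a\|+\rho\|y\|)^m$, which with $\rho=\|a\|$ gives $\sigma=2$. This bypasses the polarization constant and yields a sharper $\sigma$; the only points needing care --- that the Bochner integral respects the modulus, $\bigl|\int f\bigr|\le \int |f|$ in a Banach lattice, and that $\bigl|\,|a|+\rho e^{\mi\varphi}y\,\bigr|\le |a|+\rho|y|$ --- are both standard. (In fact the integral can be avoided altogether: since $|A|$ is positive, the single term $\binom{m}{k}|A|(|a|^{m-k},|y|^k)$ is dominated by the full positive sum $\sum_j\binom{m}{j}\rho^{j-k}|A|(|a|^{m-j},|y|^j)=\rho^{-k}|P|(|a|+\rho|y|)$, which is your bound term by term.) Since the definition of a holomorphy type only asks for some $\sigma\ge 1$, both constants are acceptable, but your route is the more economical one.
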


\begin{proof}
	Properties (a) and (b) are obvious.  To show that (c) is satisfied,
	let $P = \widehat{A}\in \rhpoly{m}{E}{F}$. Then, using
	Proposition \ref{DiffHom} and 
	the polarization inequality,
	\begin{align*}
	\Bigl\| \mdiff{k}{P}(a) \Bigl\|_r  &=
		\Bigl\|  \;\Bigl|\mdiff{k}{P}(a)\Bigr| \;\Bigl\| \leq
		\Bigl\| \mdiff{k}{|P|}(a) \Bigl\|\\
	&= \binom{m}{k} \Bigl\| \;|A|a^{m-k}\Bigr| \;\Bigr\|\le
	\binom{m}{k}\frac{m^m}{m!}\| \;|P|\;\| \|a\|^{m-k}\\
	&\le \sigma^m\, \|P_m\|_r\, \|a\|^{m-k}
	\end{align*}
	where $\sigma =2e$.
\end{proof}

\bigskip

In some circumstances, it is possible to define a complex
lattice structure on the space of regularly holomorphic
functions.  We shall use the axiomatization of complex
vector lattices given by Mittelmeyer--Wolff \cite{MW}.
They define an  \emph{Archimedean modulus} on a complex
vector space $G$ to be a 
function $m\colon G\to G$ satisfying
\begin{itemize}
	\item[(0)] $m(m(x)) = m(x)$ for every $x\in G$.
	\item[(i)] $m(\alpha x)= |\alpha| m(x)$
	for every $\alpha\in \C$, $x\in G$.
	\item[(ii)]
	$m\bigl( m( m(x)+m(y)) -m(x+y)\bigr) = m(x)+m(y)-m(x+y)$.
	\item[(iii)] $G$ is the linear span of $m(G)$.
	\item[(iv)] 
	$m(m(y)-km(x)) = m(y) -km(x)$ for every $k\in \N$ implies $x=0$.
\end{itemize}

A \emph{complex vector lattice} is a complex vector space $G$
equipped with a modulus.  
It is shown in \cite{MW}  that the real linear
span of the subset $m(G)$ of $G$ is a real vector lattice
with $m(G)$ as the positive cone.

\begin{proposition}
	Let $E$ be a complex Banach lattice
	and let $U$ be an open ball in $E$
	with centre $z_0$. Then the space $\rbhols{U}$ of
	regularly holomorphic functions of 
	bounded type on $E$ is a complex Archimedean
	Riesz space with the modulus 
	given by
	$$
	m(f) =
	|f| := \sum_{m=0}^{\infty} 
	\Bigl| \frac{1}{m!} \widehat{d}^m f(z_0)  \Bigr|\,. 
	$$
\end{proposition}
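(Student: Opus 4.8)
The plan is to identify $\rbhols{U}$ with sequences of Taylor coefficients at $z_0$ and then reduce the Mittelmeyer--Wolff axioms to the lattice structure of the spaces $\rhpoly{m}{E}{\C}$. Since $U=B(z_0,R)$ is an open ball, every holomorphic $f$ on $U$ is the sum of its Taylor series at $z_0$ on all of $U$, so $f$ is determined by the sequence $P_m:=\frac{1}{m!}\widehat{d}^m f(z_0)\in\rhpoly{m}{E}{\C}$, and bounded type at $z_0$ gives $\radreg{f}{z_0}\ge d(z_0,\partial U)=R$, i.e.\ $\limsup_m\|P_m\|_r^{1/m}\le 1/R$. First I would establish the converse: if $\limsup_m\|P_m\|_r^{1/m}\le 1/R$ then $g:=\sum_m P_m(\,\cdot\,-z_0)\in\rbhols{U}$. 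Writing $P_m=(\re P_m)^+-(\re P_m)^-+\mi\bigl((\im P_m)^+-(\im P_m)^-\bigr)$, where each of the four positive parts has regular norm $\le\|P_m\|_r$ (because $|P_m|=\sqrt{|\re P_m|^2+|\im P_m|^2}\ge|\re P_m|,|\im P_m|$), it suffices to treat the case where every $P_m$ is positive, since $\rbhols{U}$ is a vector space. So assume $P_m\ge0$, fix $z\in U$, put $w=z-z_0$, $r=\|w\|$, and let $B_m\ge0$ be the symmetric $m$-linear form of $P_m$. The partial sums $g_n=\sum_{m\le n}P_m(\,\cdot\,-z_0)$ converge to $g$ uniformly on every closed ball contained in $U$, so the $k$th Taylor polynomials $R_k^{(n)}\colon t\mapsto\sum_{k\le m\le n}\binom{m}{k}B_m(w^{m-k},t^k)$ converge in $\hpoly{k}{E}{\C}$ to $R_k:=\frac{1}{k!}\widehat{d}^k g(z)$; and each $R_k^{(n)}$ is regular, since expanding $w=x+\mi y$ and then $x^{\pm},y^{\pm}$ exhibits $t\mapsto B_m(w^{m-k},t^k)$ as a complex-linear combination of positive $k$-homogeneous polynomials. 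The crucial estimate is that, for $a\in E_+$ and any $\lambda$ with $0<\lambda<R-r$, the Bu--Buskes partition formula for $|R_k^{(n)}|$ together with $|B_m(w^{m-k},\cdot)|\le B_m(|w|^{m-k},\cdot)$ and the positivity-rescaling $\binom{m}{k}B_m(|w|^{m-k},(\lambda a)^k)\le B_m\bigl((|w|+\lambda a)^m\bigr)=P_m(|w|+\lambda a)$ give
\[
\bigl|R_k^{(n)}\bigr|(a)\ \le\ \sum_{k\le m\le n}\binom{m}{k}B_m\bigl(|w|^{m-k},a^k\bigr)\ \le\ \lambda^{-k}\sum_{m=0}^{\infty}P_m(|w|+\lambda a)\,.
\]
For $\|a\|\le1$ we have $\bigl\||w|+\lambda a\bigr\|\le r+\lambda<R$, so this yields $\|R_k^{(n)}\|_r\le\lambda^{-k}C_\lambda$ with $C_\lambda:=\sum_m\|P_m\|_r(r+\lambda)^m<\infty$, and the same estimate applied to $R_k^{(n')}-R_k^{(n)}$ shows $(R_k^{(n)})_n$ is Cauchy in $\rhpoly{k}{E}{\C}$; hence $R_k$ is regular with $\|R_k\|_r\le\lambda^{-k}C_\lambda$, so $\radreg{g}{z}\ge\lambda$, and letting $\lambda\uparrow R-r$ gives $\radreg{g}{z}\ge d(z,\partial U)$. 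Thus $g\in\rbhols{U}$, and $f\mapsto(P_m)_m$ is a linear bijection of $\rbhols{U}$ onto $\mathcal{A}:=\{(P_m)_m:\limsup_m\|P_m\|_r^{1/m}\le 1/R\}$.

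Under this identification $m(f)=|f|$ corresponds to $(|P_m|)_m$, where $|P_m|$ is the modulus in the complex Banach lattice $\rhpoly{m}{E}{\C}$; since $\|\cdot\|_r=\|\,|\cdot|\,\|$ and $|\,|P_m|\,|=|P_m|$ we get $\|\,|P_m|\,\|_r=\|P_m\|_r$, so this sequence lies in $\mathcal{A}$ and $m$ is well defined on $\rbhols{U}$. Each axiom then reduces, coordinatewise in $\mathcal{A}$, to a standard property of the modulus on $\rhpoly{m}{E}{\C}$: axiom~(0) is $\bigl|\,|P_m|\,\bigr|=|P_m|$; axiom~(i) is $|\alpha P_m|=|\alpha|\,|P_m|$; axiom~(ii) holds because $|P_m+Q_m|\le|P_m|+|Q_m|$, so that the two outermost moduli in the identity are applied to already-positive elements; axiom~(iii) is precisely the four-part decomposition above, which shows $\rbhols{U}$ is the complex-linear span of $m(\rbhols{U})$; and for axiom~(iv), the hypothesis that $m(m(g)-k\,m(f))=m(g)-k\,m(f)$ for all $k\in\N$ says $k\,|P_m|\le|Q_m|$ for every $m$ and every $k$, and since $\rhpoly{m}{E_\R}{\R}$ is Archimedean this forces $|P_m|=0$, i.e.\ $P_m=0$, for every $m$, i.e.\ $f=0$. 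Hence $m$ is an Archimedean modulus and $\rbhols{U}$ is a complex Archimedean Riesz space.

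The main obstacle is the bounded-type half of the first step: showing that the re-expansion at an arbitrary $z\in U$ of a power series regularly convergent on the whole ball $U$ still has radius of regular convergence at least $d(z,\partial U)$. A naive re-expansion introduces a polarization factor $m^m/m!\sim e^m$, which would only give a ball of radius $R/e$; the device that removes it is to keep the homogeneous terms positive and use the polydisc inequality $\binom{m}{k}B_m(|w|^{m-k},(\lambda a)^k)\le P_m(|w|+\lambda a)$, in which no polarization constant appears. The remaining ingredients --- uniform convergence of the $g_n$ on closed sub-balls, regularity of the $R_k^{(n)}$, the reduction from the positive case to the general case, and the coordinatewise axiom check --- are routine.
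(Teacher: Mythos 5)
Your proof is correct, and the axiom-checking part is essentially the paper's argument: both of you reduce each Mittelmeyer--Wolff axiom to a coordinatewise statement about the moduli $|P_m|$ in the complex Banach lattice $\rhpolys{m}{E}$ (the paper only writes out axiom (ii), via $\bigl|\,|P_m|+|Q_m|-|P_m+Q_m|\,\bigr| = |P_m|+|Q_m|-|P_m+Q_m|$, exactly as you do, and dismisses the rest as ``straightforward calculations''). Where you go beyond the paper is in verifying that $m$ actually maps $\rbhols{U}$ into itself: the paper takes for granted that $\sum_m |P_m|(\,\cdot\,-z_0)$ is again regularly holomorphic of bounded type on $U$, whereas you prove the needed characterization of $\rbhols{U}$ on a ball as $\{(P_m)_m : \limsup_m \|P_m\|_r^{1/m}\le 1/R\}$. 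That re-expansion lemma is the only genuinely non-routine point, and your way of avoiding the polarization factor $e^m$ --- reducing to positive $P_m$ via the four-part decomposition and then using the term-by-term domination $\binom{m}{k}B_m(|w|^{m-k},(\lambda a)^k)\le P_m(|w|+\lambda a)$ --- is exactly what is needed to get radius $R-r$ rather than $(R-r)/e$ at interior points; compare the paper's earlier proposition on regular holomorphy near a point, which only achieves a radius of order $(2e\rho)^{-1}$ and so would not suffice here. In short: same skeleton as the paper for the lattice axioms, plus a correct and worthwhile proof of the well-definedness of the modulus that the paper leaves implicit.
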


\begin{proof}
	The proof consists of straightforward calculations.  For example, to show
	that property (ii) in the definition is satisfied,
	let $f=\sum_m P_m$ and $g=\sum_m Q_m$ be 
	the Taylor expansions at $z_0$. Then,
	since $m$ coincides with the modulus 
	on $\rhpolys{m}{E}$,
	\begin{multline*}
	m\bigl( m( m(f)+m(g)) -m(f+g)\bigr)\\
	= 
	\biggl| \Bigl| \sum_m |P_m| + |Q_m|\Bigr| 
	-\sum_m |P_m + Q_m|
	\biggr|
	=
	\biggl|  \sum_m |P_m| + |Q_m| 
	-|P_m + Q_m|
	\biggr|\\
	= \sum_m |P_m| + |Q_m| 
	-|P_m + Q_m| = 
	m(f)+m(g)- m(f+g)\,.
	\end{multline*}
\end{proof}

Is it possible to define a complex lattice 
structure on $\rbhols{U}$ for domains
other than open balls?
It would be reasonable to expect
that, if $B(a,r)$ is an open 
ball contained in  $U$, then the restriction
mapping from $\rbhols{U}$ to $\rbhols{B(a,r)}$
would be a lattice homomorphism. If
$\rbhols{U}$ carried a complex
lattice structure satisfying this condition,
then it would follow that 
$$
|f|=\sum_{m=0}^{\infty} 
\Bigl| \frac{1}{m!} \widehat{d}^m f(a)  \Bigr| 
$$
in the ball $B(a,r)$.
\color{black}
However, this attempt to define $|f|$ locally
fails, due to a lack of coherence.

To see this, take 
$E=\C$, 
and consider a holomorphic function $f$ on $B(a,r)$. Then for each point $z_0$ in $B(a,r)$ we can expand $f$ as a
Taylor series
$$
f(z)=\sum_{m=0}^\infty a_m(z-z_0)^m
$$
for $|z|<r-|z_0|$.  Note that the coefficients $(a_m)_m$ depend on the point
$z_0$ and are given by $\displaystyle a_m=\frac{f^{(m)}(z_0)}{m!}$.

For each point $z_0$ we define the holomorphic function $|f|_{z_0}$ by
$$
|f|_{z_0}(z)=\sum_{n=0}^\infty |a_n|(z-z_0)^n.
$$
Note that this series converges on $B(z_0,r-|z_0|)$.

However, 
these expansions lack
 coherence. To see this take the
function $f\colon B(0,1)\to\mathbb{C}$ given by
$$
f(z)=\frac{1}{1-z}.
$$
We first expand $f$ about the origin to get the Taylor series expansion
$$
f(z)=\sum_{m=0}^\infty z^m.
$$
The Taylor series expansion of $f$ about $\frac{i}{2}$ is
$$
f(z)=\sum_{m=0}^\infty \left(\frac{1}{(1-\frac{i}{2})^{m+1}}\right)\left(z
-\frac{i}
{2}\right)^m=\sum_{m=0}^\infty \left(\frac{2}{5}(2+i)\right)^{m+1}\left(z
-\frac{i}{2}\right)^m.
$$ 
Then
$$
|f|_0(z)=f(z)=\sum_{m=0}^\infty z^m
$$
and
$$
|f|_{\frac{i}{2}}(z)=\sum_{m=0}^\infty \left(\frac{2}{\sqrt{5}}\right)^{m+1}\left(z-
\frac{i}{2}\right)^m.
$$
We now observe that $|f|_0(\frac{i}{2})=\frac{2}{5}(2+i)$ while $|f|_{\frac{i}
	{2}}(\frac{i}{2})=\frac{2}{\sqrt{5}}$.

So it appears that, in general, it is not
possible to define a complex lattice
structure on $\rbhols{U}$.

\section{The Bohr radius on Banach lattices}

In this section we show how the Bohr radius
for complex sequence spaces can be generalized
to the wider setting of complex Banach lattices.  In Theorem \ref{Bohr} we prove
that the growth of the Bohr radii controls
the radius of regular convergence and we show
that on finite dimensional sequence spaces, the radii of convergence and 
regular convergence are equal.

In 1914 H. Bohr \cite{Bohr} showed that if $\sum_{k=0}^\infty c_kz^k$ is a
power series on the unit disc centred at $0$, $D(0,1)$, with $\left|\sum_{k=0}
^\infty c_kz^k\right|\le 1$ for all $z$ with $|z|<1$ then $\sum_{k=0}^\infty
|c_kz^k|\le 1$ for all $z$ with $|z|\le \frac{1}{3}$. Moreover, $\frac{1}{3}$
is the optimal radius for which this inequality holds.

More generally, given a Reinhardt domain $R$ in $\mathbb{C}^n$ the Bohr radius
of $R$, $K(R)$, is defined as the supremum over all $r\ge 0$ such that if
$\sum_\alpha c_\alpha z^\alpha$ is a power series on $R$ with $\left|
\sum_\alpha c_\alpha z^\alpha\right|\le 1$ for all $z$ in $R$ then
$\sum_\alpha|c_\alpha z^\alpha|\le 1$ for all $z$ in $rR$. It follows from
results of Aizenberg \cite{Aiz}, Boas \cite{Boas}, Boas and
Kavinson \cite{BK} and Dineen and Timoney \cite{DT}, that there is a constant
$c$, independent of $n\in\mathbb{N}$ and $1\le p\le\infty$, such that
$$
\frac{1}{c}\left(\frac{1}{n}\right)^{1-\frac{1}{\min(p,2)}}\le K(B_{\ell_p^n})
\le c\left(\frac{\log n}{n}\right)^{1-\frac{1}{\min(p,2)}}.
$$

In 2003, Defant, Garc{\'\i}a and Maestre \cite{DGM2003} refined the concept of
Bohr radius and introduced the concept of homogeneous Bohr radius. Given a
finite dimensional Banach space $X=(\mathbb{C}^n,\|\cdot\|)$ with canonical
basis $(e_k)_{k=1}^n$ they define $K_m(B_X)$ as the supremum over all $r$ in
$[0,1]$ such that if $\sum_{|\alpha|=m}c_\alpha z^\alpha$ is an $m$-homogeneous
polynomial on $X$ with $\left|\sum_{|\alpha|=m}c_\alpha z^\alpha\right|\le 1$
for all $z$ in $B_X$ then $\sum_{|\alpha|=m}|c_\alpha z^\alpha|\le 1$ for all
$z$ in $rB_X$.

Let $E$ be a Banach space with an unconditional basis, $(x_n)_n$. Then the unconditional basis constant of $(x_n)_n$, $\chi((x_n)_n)$, 
is defined by
$$
\chi((x_n)_n)=\inf\left\{C:\left\|\sum_{k=1}^\infty \epsilon_k\mu_k x_k\right
\|\le C\left\|\sum_{k=1}^\infty \mu_kx_k\right\|: \mu_j\in \mathbb{C},
|\epsilon_j|=1, j\in\mathbb{N}\right\}.
$$

In \cite[Lemma~2.1]{DGM2003} it is shown that if $X=(\mathbb{C}^n,\|
\cdot\|)$ then       
$$
K_m(B_X)=\frac{1}{\root m \of {\chi_{mon}(\hpolys{m}{X})}}
$$
where $\chi_{mon}(\hpolys{m}{X})$ denotes the unconditional basis constant
of the monomials in $\hpolys{m}{X}$.

Given any $m$-homogeneous polynomial $P=\sum_{|\alpha|=m}c_\alpha z^\alpha$,
when we regard $\hpolys{m}{X}$ as a Banach lattice, the absolute value of
$P$ is given by $|P|=\sum_{|\alpha|=m}|c_\alpha|z^\alpha$. Motivated by this
observation we now introduce the $m$-th Bohr radius of a general Banach
lattice.

\begin{definition}
	Let $E$ be a complex Banach lattice and $m$ be a positive integer. We define the
	$m$-th Bohr radius of $E$, $K_m(B_E)$ by
	\begin{align*}
	K_m(B_E)&:=\sup\{\rho:\sup_{z\in \rho B_E}||P|(z)|\le \|P\| \hbox{ for all }
	P\in \rhpolys{m}{E}  \}\\
	&=
	\sup\{\rho: 
	\rho^m \|P\|_r \le \|P\|
	\text{ for all $P\in \rhpolys{m}{E}$}
	\}.
	\end{align*}
\end{definition}

Note that $0\le K_m(B_E)\le 1$ and that $K_m(B_E)$ may be $0$.

Indeed, we observe that $K_m(B_E)>0$ if and only if the regular and supremum norms are
equivalent when restricted to $\rhpolys{m}{E}$. In this case, 
${K_m(
	B_E)}^{-m}$ 
\color{black}
is the norm of the identity mapping from $(\rhpolys{m}{E},
\|\cdot\|)$ onto $(\rhpolys{m}{E},\|\cdot\|_r)$.

Let $E$ be a complex Banach lattice, $U$ be an open subset of $E$ and $f\colon U\to
\mathbb{C}$ be a homomorphic function such that $f(a+z)=\sum_{m=0}^\infty
P_m(z)$ is the Taylor series of $f$ about $a$ with each $P_m$ a regular
$m$-homogeneous polynomial.  Our next theorem provides  a lower
bound for
the radius of regular convergence,
$|r|(f,a)$, in terms of the
homogeneous Bohr radii.

\begin{theorem}\label{Bohr}
	Let $E$ be a complex Banach lattice
	and $U$ an open subset of $E$.  Let $f\colon U\to \mathbb{C}$ be a regular
	holomorphic function and $a\in U$. Then,
	$$
	\liminf_{m\to\infty}(K_m(B_E))r(f,a)\le |r|(f,a)\le r(f,a).
	$$
	Moreover, for each $a\in U$, both the upper and lower bounds are sharp.
	
\end{theorem}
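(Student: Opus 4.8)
The plan is to settle the two inequalities and then the two sharpness assertions. The right-hand inequality $\radreg{f}{a}\le\rad{f}{a}$ is immediate and was already noted in Section~3: writing $P_m=\mdiff{m}{f}(a)$ one always has $\|P_m\|\le\|P_m\|_r$, so $\limsup_m\|P_m\|^{1/m}\le\limsup_m\|P_m\|_r^{1/m}$, which is the claim after inverting. For the left-hand inequality, keep $P_m=\mdiff{m}{f}(a)\in\rhpolys{m}{E}$, the homogeneous terms of the Taylor series of $f$ at $a$. If $\liminf_mK_m(B_E)=0$ the inequality is trivial, since $f$ regularly holomorphic gives $\radreg{f}{a}\ge0$. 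Otherwise $\liminf_mK_m(B_E)>0$, so $K_m(B_E)>0$ for all large $m$; by the definition of the $m$th Bohr radius --- equivalently, because ${K_m(B_E)}^{-m}$ is the norm of the identity $(\rhpolys{m}{E},\|\cdot\|)\to(\rhpolys{m}{E},\|\cdot\|_r)$ --- we have $\|P_m\|_r\le K_m(B_E)^{-m}\|P_m\|$, hence $\|P_m\|_r^{1/m}\le K_m(B_E)^{-1}\|P_m\|^{1/m}$. Taking $\limsup_m$ and using $\limsup_m(x_my_m)\le(\limsup_mx_m)(\limsup_my_m)$ for nonnegative sequences with finite limits superior, together with $\limsup_mK_m(B_E)^{-1}=(\liminf_mK_m(B_E))^{-1}$, gives
$$
\radreg{f}{a}^{-1}=\limsup_m\|P_m\|_r^{1/m}\le\bigl(\liminf_mK_m(B_E)\bigr)^{-1}\limsup_m\|P_m\|^{1/m}=\bigl(\liminf_mK_m(B_E)\bigr)^{-1}\rad{f}{a}^{-1},
$$
and inverting yields $\liminf_mK_m(B_E)\cdot\rad{f}{a}\le\radreg{f}{a}$. (Both limits superior are finite: $\|P_m\|^{1/m}$ since $f$ is holomorphic, $K_m(B_E)^{-1}$ by the standing assumption.)

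For sharpness of the upper bound, fix $a$ and take a positive functional $\varphi\in E'$ with $\|\varphi\|=1$; then $f(z)=\sum_m\varphi(z-a)^m=(1-\varphi(z-a))^{-1}$ has Taylor coefficients $\varphi^m$ at $a$, these are positive polynomials, so $\sum_m|\varphi^m|(|z-a|)=\sum_m\varphi(|z-a|)^m$ converges on the open neighbourhood $\{\varphi(|z-a|)<1\}$ of $a$ and $f$ is regularly holomorphic there, while since $\varphi^m\ge0$ we have $\|\varphi^m\|_r=\|\,|\varphi^m|\,\|=\|\varphi^m\|=\|\varphi\|^m=1$, whence $\rad{f}{a}=\radreg{f}{a}=1$. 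For sharpness of the lower bound, put $L=\liminf_mK_m(B_E)$ and first suppose $K_m(B_E)>0$ eventually. Choose a strictly increasing $(m_j)$ with $K_{m_j}(B_E)\to L$ and, using once more that ${K_{m_j}(B_E)}^{-m_j}$ is the norm of the identity $(\rhpolys{m_j}{E},\|\cdot\|)\to(\rhpolys{m_j}{E},\|\cdot\|_r)$, pick $P_{m_j}$ with $\|P_{m_j}\|=1$ and $(1-2^{-j})K_{m_j}(B_E)^{-m_j}\le\|P_{m_j}\|_r\le K_{m_j}(B_E)^{-m_j}$. The lacunary series $f(z)=\sum_jK_{m_j}(B_E)^{m_j}P_{m_j}(z-a)$ has Taylor coefficients $Q_{m_j}=K_{m_j}(B_E)^{m_j}P_{m_j}$ at $a$ (and $0$ in the remaining degrees), with $\|Q_{m_j}\|=K_{m_j}(B_E)^{m_j}$ and $1-2^{-j}\le\|Q_{m_j}\|_r\le1$; hence $\limsup_m\|Q_m\|^{1/m}=\limsup_jK_{m_j}(B_E)=L$ and $\limsup_m\|Q_m\|_r^{1/m}=1$. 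Thus $\radreg{f}{a}=1<\infty$, so $f$ is regularly holomorphic at $a$, hence --- by the Proposition that regular holomorphy at a point propagates to a neighbourhood --- on some open $U\ni a$; and $\rad{f}{a}=1/L$ if $L>0$, so then $\radreg{f}{a}=1=L\cdot(1/L)=L\,\rad{f}{a}$ and the bound is attained, while if $L=0$ this same $f$ has $\rad{f}{a}=\infty$ and $\radreg{f}{a}=1$, so the factor $L=0$ cannot be enlarged (the remaining subcase $K_m(B_E)=0$ for infinitely many $m$ is handled similarly, taking $\|P_{m_j}\|=1$ with $r_j:=\|P_{m_j}\|_r\ge j^{m_j}$ and $f(z)=\sum_jr_j^{-1}P_{m_j}(z-a)$, again with $\rad{f}{a}=\infty$, $\radreg{f}{a}=1$; cf.\ the $\ell_p$-examples of Section~6).

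The left-hand inequality is the routine part, a bare manipulation of limits superior. The main obstacle is the lower-bound sharpness: one must verify that the lacunary series assembled from near-extremal homogeneous polynomials genuinely defines a regularly holomorphic function on an open set --- this is exactly where the propagation Proposition and the identification of $K_m(B_E)^{-m}$ with the norm of the embedding $(\rhpolys{m}{E},\|\cdot\|)\hookrightarrow(\rhpolys{m}{E},\|\cdot\|_r)$ are used --- and one must read ``sharp'' in the degenerate case $L=0$ as non-improvability of the constant rather than as attainment of equality.
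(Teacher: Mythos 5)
Your proposal is correct and follows essentially the same route as the paper: the left-hand inequality via $\|P_m\|_r\le K_m(B_E)^{-m}\|P_m\|$ and the standard $\limsup$ algebra, sharpness of the upper bound via positive Taylor coefficients, and sharpness of the lower bound via near-extremal polynomials with the same case split according to whether $K_m(B_E)$ vanishes for infinitely many $m$. Your variants --- the concrete geometric series $\sum_m\varphi(z-a)^m$, the lacunary normalisation giving $|r|(f,a)=1$ and $r(f,a)=1/L$ in place of the paper's $\|P_m\|=1$ normalisation, and the explicit appeal to the propagation proposition to confirm regular holomorphy of the constructed series --- are only cosmetic refinements of the paper's argument.
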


\begin{proof} If $\liminf_{m\to\infty}K_m(B_E)=0$ then there is nothing to
	prove. Let us now suppose that there is $m_0\in \mathbb{N}$ so
	that  $K_m(B_E)>0$ for each $m\geq m_0$. Write the Taylor series of $f$
	about $a$ as
	$$
	f(a+z)=\sum_{m=0}^\infty P_m(z).
	$$
	Then, by the definition of the Bohr radius, we have that
	$$
	(K_m(B_E))^m\|P_m\|_r\le \|P_m\|
	$$
	for each $m$ which we will rewrite as
	$$
	\|P_m\|_r\le \frac{1}{(K_m(B_E))^m}\|P_m\|
	$$
	for all $m\ge m_0$. Taking $m$-th roots we get that
	$$
	\|P_m\|_r^{1/m}\le \frac{1}{(K_m(B_E))}\|P_m\|^{1/m}.
	$$
	We now let $m$ tend to infinity to get
	$$
	\limsup_{m\to\infty}\|P_m\|_r^{1/m}\le \limsup_{m\to\infty}\frac{1}{(K_m(B_E))}
	\limsup_{m\to\infty}\|P_m\|^{1/m}.
	$$
	Noting that 
	$\displaystyle\frac{1}{\limsup_{m\to \infty}a_m}=\liminf_{m\to\infty}
	a_m$
	\color{black}
	 we get that
	$$
	\limsup_{m\to\infty}\|P_m\|_r^{1/m}\le \frac{1}{\liminf_{m\to\infty}(K_m(B_E))}
	\limsup_{m\to\infty}\|P_m\|^{1/m}
	$$
	and inverting we obtain
	$$
	\liminf_{m\to\infty}(K_m(B_E))r(f,a)\le |r|(f,a).
	$$
	Therefore, we have
	$$
	\liminf_{m\to\infty}(K_m(B_E))r(f,a)\le |r|(f,a)\le r(f,a).
	$$

	Let us now see that both sides of this inequality are sharp. If we
	consider a
	holomorphic function $f=\sum_{m=0}^\infty P_m$ with $P_m\ge 0$ for each $m\in \mathbb{N}$ then we have that $|r|(f,0)=r(f,0)$.
	
	To show that the lower bound is optimal we consider two cases.
	We first assume that $K_m(B_E)>0$ all
	but finitely many $m$. Then, discarding finitely many terms, we can assume
	that
	$K_m(B_E)>0$ for all $m$. For each $m\in \mathbb{N}$ we choose $P_m\not=0$
	so that
	\begin{equation}\label{eq:C}
	\frac{1}{2}\|P_m\|\le K_m(B_E)^m\|P_m\|_r\le \|P_m\|. 
	\end{equation}
	Dividing $P_m$ by $\|P_m\|$ we may assume that $\|P_m\|=1$ for each $m\in \mathbb{N}$. If we let $f=\sum_{m=1}^\infty P_m$ then $r(f,0)=1$.
	
	From (\ref{eq:C})  we obtain 
	$$
	\left(\|P_m\|^{1/m}\right)^{-1}\le K_m(B_E)^{-1}(\|P_m
	\|_r^{1/m})^{-1}\le 2^{1/m}(\|P_m\|^{1/m})^{-1} \,,
	$$
	or
	$$
	K_m(B_E) \left(\|P_m\|^{1/m}\right)^{-1}\le (\|P_m\|_r^{1/m})^{-1}\le 2^{1/m}
	K_m(B_E)(\|P_m\|^{1/m})^{-1}.
	$$
	As $\|P_m\|=1$ for all $m$ we get
	$$
	K_m(B_E)\le (\|P_m\|_r^{1/m})^{-1}\le 2^{1/m} K_m(B_E).
	$$
	Letting $m$ tend to infinity we get that
	$$\left(\limsup_{m\to\infty}\|P_m\|_r^{1/m}\right)^{-1}=
	\liminf_{m\to\infty}K_m(B_E)
	$$
	and therefore we see that the lower bound is attained.
	
	Let us now suppose that we have infinitely many $m$ with $K_m(B_E)=0$.
	Then we can choose a
	subsequence $(m_k)_k$ so that $K_{m_k}(B_E)=0$ for all $k$. Fix $j\in
	\mathbb{N}$ and for each $k$ in $\mathbb{N}$ choose $P_k\in
	\rhpolys{m_k}{E} $ so that
	$$
	\sup_{z\in \frac{1}{j}B_E}||P_k|(z)|> \|P_k\|
	$$
	or that 
	$$
	\|P_k\|_r>  j^{m_k}\|P_k\|.
	$$
	Then repeating the above argument with $K_m(B_E)$ replaced with $\frac{1}{j}$
	we get
	$$
	\limsup_{k\to\infty}\|P_k\|_r^{1/m_k}\ge\liminf_{m_k\to\infty}{j}
	$$
	and therefore
	$$
	|r|(f,0)\le \frac{1}{j}.
	$$
	As this holds for all $j\in \mathbb{N}$ we see that we can find regular holomorphic
	functions with radius of convergence $1$ but radius of absolute convergence
	arbitrarily small.
\end{proof}

As we have mentioned above, it follows from \cite[Lemma~2.1]{DGM2003} that if 
$X$ is $(\mathbb{C}^n,\|
\cdot\|)$ then 
$K_m(B_X)= \frac{1}{\root m \of {\chi_{\rm mon}(\hpolys{m}{X})}}$.
In this case Theorem~\ref{Bohr} gives us that
$$
\frac{1}{\limsup_{m\to\infty}\root m \of {\chi_{\rm mon}(\hpolys{m}{X})}}r(f,a)\le
|r|(f,a)\le r(f,a).
$$

If $X$ is the finite dimensional Banach lattice $(\mathbb{C}^n,\|\cdot\|)$,
with any norm for which
the unit vector basis
is a $1$-unconditional
Schauder basis, then it
follows from \cite[Lemma~11..2.2]{DGMMM} that $\limsup_{m\to \infty}\sqrt[m]{\chi_{\rm
	mon}(\hpolys{m}{X})}=1$. Hence, we have the following result.

\begin{theorem}\label{finiteradius}
	Let $f$ be a holomorphic function on $(\mathbb{C}^n,\|\cdot\|)$,
	with any norm for which
	the unit vector basis
	is a $1$-unconditional
	Schauder basis. Then $r(f,a)=|r|(f,a)$ for every $a$.
\end{theorem}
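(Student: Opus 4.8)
The plan is to read the result off from Theorem~\ref{Bohr} once we know that the relevant Bohr radii do not decay. First I would check that an arbitrary holomorphic $f$ on $X=(\mathbb{C}^n,\|\cdot\|)$ is automatically regularly holomorphic, so that Theorem~\ref{Bohr} is applicable. Since $X$ is finite dimensional, each space $\hpolys{m}{X}$ is finite dimensional and every element of it is a finite linear combination of monomials, so its monomial expansion is trivially absolutely convergent at every point; hence $\hpolys{m}{X}=\rhpolys{m}{X}$ and the derivatives $\mdiff{m}{f}(a)$ are regular $m$-homogeneous polynomials for every $m$ and every $a$. Moreover $\chi_{\rm mon}(\hpolys{m}{X})<\infty$ for each $m$, so $K_m(B_X)>0$, i.e.\ the regular and supremum norms on $\hpolys{m}{X}$ are equivalent; combined with the holomorphy of $f$, which gives $\limsup_m\|\mdiff{m}{f}(a)\|^{1/m}<\infty$, this yields $\limsup_m\|\mdiff{m}{f}(a)\|_r^{1/m}<\infty$. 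Thus $f$ satisfies conditions (a), (b), (c) and is regularly holomorphic, and Theorem~\ref{Bohr} applies to $f$ at any $a$.

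Next I would compute $\liminf_{m\to\infty}K_m(B_X)$. By \cite[Lemma~2.1]{DGM2003} we have $K_m(B_X)=\bigl(\chi_{\rm mon}(\hpolys{m}{X})\bigr)^{-1/m}$, and by \cite[Lemma~11.2.2]{DGMMM} we have $\limsup_{m\to\infty}\sqrt[m]{\chi_{\rm mon}(\hpolys{m}{X})}=1$. Using the identity $\bigl(\limsup_m a_m\bigr)^{-1}=\liminf_m a_m^{-1}$ for positive sequences, we obtain
$$
\liminf_{m\to\infty}K_m(B_X)=\frac{1}{\limsup_{m\to\infty}\sqrt[m]{\chi_{\rm mon}(\hpolys{m}{X})}}=1.
$$

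Finally, substituting this into the conclusion of Theorem~\ref{Bohr} applied to $f$ at $a$,
$$
r(f,a)=\Bigl(\liminf_{m\to\infty}K_m(B_X)\Bigr)r(f,a)\le |r|(f,a)\le r(f,a),
$$
so all three quantities coincide; since $a$ was arbitrary, $r(f,a)=|r|(f,a)$ for every $a$. I do not anticipate a serious obstacle here: the substance is entirely carried by Theorem~\ref{Bohr} and the two cited lemmas, and the only points that need a little care are the verification in the first paragraph that a general holomorphic function on $\mathbb{C}^n$ meets the three defining conditions of regular holomorphy, and keeping the direction of the $\liminf$/$\limsup$ reciprocal identity straight.
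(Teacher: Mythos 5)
Your proposal is correct and follows essentially the same route as the paper: apply Theorem~\ref{Bohr} together with the identity $K_m(B_X)=\chi_{\rm mon}(\hpolys{m}{X})^{-1/m}$ from \cite[Lemma~2.1]{DGM2003} and the fact $\limsup_m\sqrt[m]{\chi_{\rm mon}(\hpolys{m}{X})}=1$ from \cite{DGMMM}, so that $\liminf_m K_m(B_X)=1$ squeezes $|r|(f,a)$ against $r(f,a)$. Your opening verification that every holomorphic function on $\mathbb{C}^n$ is automatically regularly holomorphic is a point the paper leaves implicit, and it is a worthwhile addition (noting only that the uniformity of the norm-equivalence constants across $m$ is itself supplied by the $\liminf_m K_m(B_X)=1$ computation).
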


Note that we can restate the above theorem as follows:
		Let $f$ be a holomorphic function on $(\mathbb{C}^n,\|\cdot\|)$,
		with any norm for which
		the unit vector basis
		is a $1$-unconditional
		Schauder basis. Then for each $a$ in $\mathbb{C}^n$ the radius of
		convergence of the monomial expansion of $f$ about $a$ is equal to
		the radius of convergence of the Taylor series of $f$ about $a$.

\bigskip

Theorem~\ref{finiteradius} is not true for real analytic functions on subsets
of $\mathbb{R}^n$. Indeed, Hayman, \cite{Hay}, shows that if $f(x)=
\sum_{k=0}^\infty P_k(x)$, where each $P_k$ is a harmonic $k$-homogeneous polynomial, converges on the 
polydisc $\{(x_i)_{i=1}^n:|x_i|<r\}$ in $\ell_\infty^n$ then $\sum_{k=0}^\infty
|P_k|$ converges on the polydisc $\{(x_i)_{i=1}^n:|x_i|<r/\sqrt{2}\}$. 
Moreover, an
example is provided in \cite{Hay} to show that the factor of $r/\sqrt{2}$ is
sharp.

\section{Regular Holomorphic Functions on $\ell_p$}

In this section we will look at regular holomorphic functions on $\ell_p$ for
$1<p<\infty$. We will show that it possible to construct holomorphic functions
with radius of convergence $1$ yet having arbitrarily small radius of regular
convergence.

In our constructions we need to consider 
the Bohr radius, $K_m(B_{\ell_p})$, of $\hpolys{m}{\ell_p}$.
\color{black}
We begin with the observation that for any positive integer $k$ we have
$K_m(B_{\ell_p})\le K_m(B_{\ell_p^k})$. In addition, by \cite[Lemma~2.1]{DGM2003}, for each $m$ and $k$ we have
$$
K_m(B_{\ell_p^k})=\frac{1}{\root m \of {\chi_{\rm mon}(\hpolys{m}{\ell_p^k})}}.
  $$
Let $\pi_k$ denote the canonical
projection from $\ell_p$ onto
$\ell_p^k$.
Our plan is to define a holomorphic
function $f$ on $\ell_p$ of the form $f(z)=\sum_{m=0}^\infty P_m\circ\pi_{n_m}
(z)$ where for each $m$, $P_m$ is an $m$-homogeneous polynomial on
$\ell_p^{n_m}$ with $n_m\to\infty$ as $m\to\infty$.

We need some notation. Let $(A_{mn})_{m,n}$ and $(B_{mn})_{m,n}$ be two doubly
indexed sequences of positive real numbers. Following \cite{DF}, we will write $A_{mn}\sim B_{mn}$
if there is $C\ge 1$ so that for all $m$ and $n$ we have
$$
(1/C^m) A_{mn}\le B_{mn}\le C^m A_{mn}.
$$

It follows from
\cite[Page 133]{DF} that for $1\le p\le \infty$ we have
$$
\chi_{\rm mon}(\hpolys{m}{\ell_p^n})\sim \left(1+\frac{n}{m}\right)^{(m-1)
  \left(1-\frac{1}{\min(p,2)}\right)}.
  $$

 Let $\alpha_m>0$.
  Then for each fixed positive integer $m$, let us choose 
  $n_m$ 
  so that
  $$
  1+\frac{n_m}{m}> \left((1+\alpha_m)^{\frac{m}{m-1}}\left(C^{\frac{m}{m-1}}
    \right)\right)^{\frac{1}{1-\frac{1}{\min(p,2)}}}.
  $$
  Then we have
  \begin{align*}
    \root m\of{\chi_{\rm mon}(\hpolys{m}{\ell_p^{n_m}})}&\ge
    \frac{1}{C}\left(\left(1+\frac{n_m}{m}\right)^{1-\frac{1}{\min(p,2)}}\right)
                                                             ^{\frac{m-1}{m}}\\
                                                           &> 1+\alpha_m.
  \end{align*}
  Therefore,
  for
  each positive integer $m$ we have that $K_m(B_{\ell_p^{n_m}})<
  \frac{1}{1+\alpha_m}$.

  Thus,
  for each $m$ we can choose an $m$-homogeneous polynomial $P_m$ in
  $\hpolys{m}{\ell_p^{n_m}}$ so that $\|P_m\|=1$ and $\|P_m\|_r^{\frac{1}{m}}>
  {1+\alpha_m}$. If we define
  $f$ on the unit ball of $\ell_p$ by $f(z)=\sum_{n=0}^\infty P_m(\pi_{n_m}(z))$
  then
  $r(f,0)=1$ while $|r|(f,0)\le \left(\limsup_{m\to\infty}\frac{1}{1+\alpha_m}
    \right)^{-1}$.

  Let us show that if $p>1$ then for each $m$ in $\mathbb{N}$ and each $1\le
  \eta<\infty$ we can find $n_m\in \mathbb{N}$ and an $m$-homogeneous
  polynomial $P_\eta$ in $\hpolys{m}{\ell_p^{n_m}}$
  with $\|P_\eta\|=1$ and $\|P_\eta\|_r^{\frac{1}{m}}=\eta$. 

  We begin by choosing $\alpha_m$ so that $1+\alpha_m>\eta$.
  From the above construction we know that we can find $n_m\in \mathbb{N}$ and
  $P_m$ in $\hpolys{m}{\ell_p^{n_m}}$ with $\|P_m\|=1$ and $\|P_m\|_r^{1/m}> 1+\alpha_m$. We now choose a
positive $m$-homogeneous polynomial $Q_m$ on $\hpolys{m}{\ell_p^{n_m}}$ with
$\|Q_m\|=1$. As $S_{\hpolys{m}{\ell_p^{n_m}}}$ is path connected, we can find
a path $\gamma\colon [0,1]\to S_{\hpolys{m}{\ell_p^{n_m}}}$ with $\gamma(0)=Q_m$
and $\gamma(1)=P_m$.

Let us now consider the function $\lambda\colon [0,1]\to\mathbb{R}$ given by
$\lambda(t)=\|\gamma(t)\|_r^{\frac{1}{m}}$. Since the supremum and regular norms
are equivalent, we know that $\lambda$ is continuous. As $\lambda(0)=1$ and
$\lambda
(1)\ge 1+\alpha_m>\eta>1$ the Intermediate Value Theorem tells us that we can
find $t_o\in (0,1)$ so that $\lambda(t_o)=\eta$. If we set 
$\gamma(t_0)=
P_{m,\eta}$
then we have an $m$-homogeneous polynomial $P_{m,\eta}$ on $\ell_p^{n_m}$ with
$\|P_{m,\eta}\|=1$ and $\|P_{m,\eta}\|_r^{\frac{1}{m}}=\eta$. We now define
$f$ on $B_{\ell_p}$ by $f(z)=\sum_{m=0}^\infty P_{m,\eta}(\pi_{n_m}(z))$ to obtain
the following result.

\begin{proposition}\label{ellp}
Let $p>1$. Then for each 
$\tau\in (0,1)$ 
there is a holomorphic function $f$ on $B_{
\ell_p}$ with $r(f,0)=1$ and
$|r|(f,0)=\tau$.
\end{proposition}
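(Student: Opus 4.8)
The plan is to set $\eta=1/\tau$, which is a real number with $\eta>1$, and to build $f$ directly from the polynomials produced in the discussion preceding the statement. First I would record what that discussion gives: for this $\eta$ and for every $m\in\mathbb N$ there are an integer $n_m$ and an $m$-homogeneous polynomial $P_{m,\eta}\in\hpolys{m}{\ell_p^{n_m}}$ with $\|P_{m,\eta}\|=1$ and $\|P_{m,\eta}\|_r^{1/m}=\eta$. The ingredients are: the estimate $\chi_{\rm mon}(\hpolys{m}{\ell_p^n})\sim\bigl(1+\tfrac{n}{m}\bigr)^{(m-1)(1-1/\min(p,2))}$ from \cite{DF}, which is where the hypothesis $p>1$ is used, so that the exponent $1-1/\min(p,2)$ is positive and $n_m$ can be chosen large enough that $K_m(B_{\ell_p^{n_m}})<\tfrac{1}{1+\alpha_m}$ for a prescribed $\alpha_m$ with $1+\alpha_m>\eta$; a polynomial $P_m\in\hpolys{m}{\ell_p^{n_m}}$ with $\|P_m\|=1$ and $\|P_m\|_r^{1/m}>1+\alpha_m$; a positive polynomial $Q_m$ (for instance $Q_m(w)=w_1^m$) with $\|Q_m\|=\|Q_m\|_r=1$, the two norms agreeing because $|Q_m|=Q_m$; a path in the unit sphere of the finite-dimensional space $\hpolys{m}{\ell_p^{n_m}}$ — which is path connected since that space has dimension at least $2$ — joining $Q_m$ to $P_m$; and the Intermediate Value Theorem applied to the continuous function $t\mapsto\|\gamma(t)\|_r^{1/m}$, whose values run from $1$ up past $\eta$. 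One sets $P_{m,\eta}=\gamma(t_0)$ for the resulting parameter $t_0$.

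Next I would define $f\colon B_{\ell_p}\to\mathbb C$ by $f(z)=\sum_{m=0}^\infty P_{m,\eta}(\pi_{n_m}(z))$, where $\pi_{n_m}$ is the coordinate projection of $\ell_p$ onto $\ell_p^{n_m}$. The point to verify — and this is really the only step needing care — is that composing with $\pi_{n_m}$ leaves both the supremum norm and the regular norm unchanged. This holds because $\pi_{n_m}(B_{\ell_p})=B_{\ell_p^{n_m}}$ (the inclusion $\ell_p^{n_m}\hookrightarrow\ell_p$ is an isometric section of $\pi_{n_m}$), so $\|R\circ\pi_{n_m}\|=\|R\|$ for any polynomial $R$ on $\ell_p^{n_m}$; since moreover $|R\circ\pi_{n_m}|=|R|\circ\pi_{n_m}$ (the monomial coefficients are preserved), applying the first identity to $|P_{m,\eta}|$ gives $\|P_{m,\eta}\circ\pi_{n_m}\|_r=\|P_{m,\eta}\|_r=\eta^m$, while applying it to $P_{m,\eta}$ gives $\|P_{m,\eta}\circ\pi_{n_m}\|=1$. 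From $\|P_{m,\eta}\circ\pi_{n_m}\|=1$ and the Cauchy--Hadamard formula we get $r(f,0)=\bigl(\limsup_m 1\bigr)^{-1}=1$; in particular $\sum_m\|P_{m,\eta}\circ\pi_{n_m}\|\,\rho^m=\sum_m\rho^m<\infty$ for $\rho<1$, so the series converges uniformly on $\rho B_{\ell_p}$ for every $\rho<1$ and $f$ is genuinely holomorphic on $B_{\ell_p}$ (a uniform-on-bounded-sets limit of polynomials).

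Finally, since the summands are homogeneous of strictly increasing degree, $\mdiff{m}{f}(0)=P_{m,\eta}\circ\pi_{n_m}$, so $\bigl\|\mdiff{m}{f}(0)\bigr\|_r^{1/m}=\eta$ for every $m$, whence $|r|(f,0)=\bigl(\limsup_m\eta\bigr)^{-1}=1/\eta=\tau$. I expect no serious obstacle here: the substantive construction of the $P_{m,\eta}$ has already been carried out in the paragraphs above the statement, and what remains is the norm-invariance under $\pi_{n_m}$ together with the bookkeeping identifying the two radii.
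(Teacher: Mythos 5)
Your proposal is correct and follows essentially the same route as the paper: the proposition is obtained exactly as in the discussion preceding it, by fixing $\eta=1/\tau$, producing the polynomials $P_{m,\eta}$ on $\ell_p^{n_m}$ via the Defant--Frerick asymptotics for $\chi_{\rm mon}(\hpolys{m}{\ell_p^{n}})$ together with the path-connectedness/Intermediate Value Theorem argument, and summing $P_{m,\eta}\circ\pi_{n_m}$. Your explicit verification that composition with $\pi_{n_m}$ preserves both the supremum and regular norms is a detail the paper leaves implicit, but it does not change the argument.
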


This proposition tells us that on $\ell_p$, $1<p<\infty$, for
every $\tau$ in $(0,1)$,
it is possible to find a holomorphic function that has a Taylor series about
$0$ with radius of convergence equal to $1$, yet whose monomial expansion
about $0$ has radius of convergence  equal to $\tau$.

For $\ell_1$, Matos \cite[Prop.~3.7]{Matos}
showed that every bounded $m$-homogeneous 
polynomial is regular  and that
the regular and supremum norms 
on $\npoly{m}{\ell_1}$ are equivalent.
More precisely,
$$
\|P\|\le \|P\|_r \le e^m \|P\|
$$
for every $P\in \npoly{m}{\ell_1}$.
Thus, on $\ell_1$, we have
$e^{-1}r(f,0) \le |r|(f,0) \le r(f,0)$ for every
holomorphic function $f$ on $\ell_1$ or its unit ball.
We do not know if this inequality is 
sharp.
  
\section{Orthogonally Additive Holomorphic Functions}

In this 
section
we 
study 
orthogonally additive holomorphic functions on
Banach lattices and show that for this class of functions the radii of
convergence and regular convergence coincide.

We begin by looking at what is known for finite dimensional Banach lattices.
Let us suppose that $X$ is 
 $\mathbb{C}^n$ endowed with a norm under
which $(e_j)_{j=1}^n$ is a Schauder basis with unconditional basis constant $1$.
Given 
a subset $J$
of $\{1,\ldots,n\}^m$ for some natural number $m$, Bayart,
Defant and Schl\"uters, \cite{BDS},  use $\hpolys{J}{X}$ to denote the closed
subspace of all holomorphic function $f$ in $\mathcal{H}^\infty(B_X)$ for which
$c_\alpha(f)=0$ if $\alpha\not\in J$. 
In other words, ${\cal P}(^JX)$
is the span of $\{z^j;j\in J\}$.

In the case where we take 
$J=\{(k,\ldots,k) : k\in \{1,\dots,n\}\}$ 
the space
$\hpolys{J}{X}$ is the space of $m$-homogeneous orthogonally additive
polynomials on $X$. Using \cite[Theorem~1.3]{AG} Bayart, Defant and Schl\"uters,
deduce on \cite[Page 113]{BDS} that $\chi_{\rm mon}(\hpolys{J}{X})=1$. In
particular, this means that $\|P\|=\|P\|_r$ for every $m$-homogeneous
orthogonally
additive polynomial on $X$. We now extend this result to orthogonally additive
polynomials on any complex Banach lattice.

Let $E$ be a complex Banach lattice. Then $z,w\in E$ are said to be \emph{disjoint}, denoted by $z\perp x$, if $|z|\wedge |w| = 0$ in $E_\R$.

\begin{lemma}
	Let $E$ be a complex Banach
	lattice and let $z=x+iy, w=u+iv$
	be elements of $ E$.  Then $z\perp w$  if and only if $x,y\perp u,v$.
\end{lemma}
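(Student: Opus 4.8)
The plan is to reduce the equivalence to three elementary inequalities between the modulus of an element $z=x+iy$ of a complex Banach lattice and the moduli of its real and imaginary parts, namely
$$
|x|\le|z|,\qquad |y|\le|z|,\qquad |z|\le|x|+|y|\,.
$$
I would read these off directly from the defining formula $(\ref{e:def})$: the first two by evaluating $x\cos\theta+y\sin\theta$ at $\theta\in\{0,\pi\}$ and at $\theta\in\{\pi/2,3\pi/2\}$ and using that $x\cos\theta+y\sin\theta\le|z|$ for all $\theta$; the third from $x\cos\theta+y\sin\theta\le|\cos\theta|\,|x|+|\sin\theta|\,|y|\le|x|+|y|$ followed by taking the supremum over $\theta$. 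Since disjointness is defined in the real lattice $E_\R$, all of the computations below are carried out there, with $|z|$ and $|w|$ regarded simply as positive elements of $E_\R$.

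For the forward implication I would proceed as follows. Assume $z\perp w$, i.e.\ $|z|\wedge|w|=0$. Because $|x|\le|z|$, $|u|\le|w|$, and the lattice infimum is monotone and positivity-preserving, $0\le|x|\wedge|u|\le|z|\wedge|w|=0$, so $x\perp u$; applying the same argument to the pairs $(x,v)$, $(y,u)$ and $(y,v)$ gives $x\perp v$, $y\perp u$ and $y\perp v$.

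For the converse, assuming $x,y\perp u,v$, I would start from $0\le|z|\le|x|+|y|$ and $0\le|w|\le|u|+|v|$, use monotonicity of $\wedge$, and then apply twice the standard Riesz-space inequality $(a_1+a_2)\wedge b\le(a_1\wedge b)+(a_2\wedge b)$ for positive elements to obtain
$$
0\le|z|\wedge|w|\le(|x|+|y|)\wedge(|u|+|v|)\le|x|\wedge|u|+|x|\wedge|v|+|y|\wedge|u|+|y|\wedge|v|=0\,,
$$
since each of the four terms vanishes by hypothesis; hence $|z|\wedge|w|=0$, that is, $z\perp w$.

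I do not anticipate a genuine obstacle: the entire argument rests on the three modulus estimates above, which are routine consequences of the Krivine functional calculus, together with subadditivity of the infimum, which is a basic Riesz-space identity. The only mild subtlety is to keep track of the fact that disjointness is a statement about the underlying real lattice $E_\R$, so the moduli should be manipulated there throughout.
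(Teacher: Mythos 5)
Your proof is correct, but it takes a genuinely different route from the paper's. The paper expands both moduli via the Krivine formula $|z|=\sup_\theta(x\cos\theta+y\sin\theta)$ and then invokes the infinite distributive law for lattice operations to rewrite $|z|\wedge|w|$ as a double supremum of the terms $(|x|\cos\theta+|y|\sin\theta)\wedge(|u|\cos\phi+|v|\sin\phi)$; the forward implication is read off by choosing $\theta,\phi\in\{0,\pi/2\}$, and the converse is said to follow from the same identity. You instead isolate the sandwich $|x|\vee|y|\le|z|\le|x|+|y|$ (all three inequalities are indeed immediate from the defining supremum, exactly as you indicate) and then run the forward direction on monotonicity of $\wedge$ and the converse on the Riesz-space subadditivity $(a_1+a_2)\wedge b\le a_1\wedge b+a_2\wedge b$ for positive elements. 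Your argument avoids the infinite distributive law entirely and makes the converse direction fully explicit, where the paper leaves it as an easy consequence; the paper's approach buys a single identity from which both implications drop out and which stays closer to the functional-calculus definition of the complex modulus. Both are complete and correct; your remark that all manipulations take place in $E_\R$ is the right point to flag.
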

\begin{proof}
	We have
	\begin{multline}\label{1}
	|z|\wedge |w| = \\
	\sup\{|x|\cos\theta+|y|\sin\theta: \theta\in[0, 2\pi]\} \wedge \sup\{|u|\cos\phi+|v|\sin\phi:  \phi\in[0, 2\pi]\}\\
	= \sup_\theta \sup_\phi \;
	\bigl(|x|\cos\theta +|y|\sin\theta\bigr)\wedge
	\bigl(|u|\cos\phi +|v|\sin\phi \bigr)\,,
	\end{multline}
	using the infinite distributive property
	of the lattice operations 
	\cite[Theorem 1.8]{AB}.

	Suppose that $z\perp w$.  Then, by (\ref{1}),
	$$
	\bigl(|x|\cos\theta +|y|\sin\theta\bigr)\wedge
	\bigl(|u|\cos\phi +|v|\sin\phi \bigr) \le 0\,
	$$
	for all $\theta, \phi$.  
	Taking $\theta,\phi = 0,\pi/2$, we get
	$$
	|x|\wedge |u| = |x|\wedge |v| = |y|\wedge |u| 
	=|y|\wedge |v| = 0\,.
	$$
	
	The converse follows easily from (\ref{1}).
\end{proof}

	 We will now make use of the following property of
	 real orthogonally additive polynomials.  Let $Q$
	 be an  $m$-homogeneous polynomial on $E$, generated by the symmetric 
	 $m$-linear form $B$.  Then $B$ is
	 said to be \emph{orthosymmetric} if
	 $B(x_1,\dots,x_m)=0$ whenever any two of $x_1,\dots,x_m$
	 are disjoint.  
	 This property of $B$ is equivalent
	 to $Q$ being orthogonally additive
	 \cite{BB}.

	 \begin{proposition}\label{prop:oad}
	 	Let $E$ be a complex Banach lattice and $P$ be a regular $m$-homogeneous polynomial on $E$. Then the following are equivalent:
	 	\begin{itemize}
	 		\item[(a)] $P$ is orthogonally additive on $E$.
	 		\item[(b)] $P$ is orthogonally additive on $E_\R$.
	 		\item[(c)] The real and imaginary parts of $P$ are orthogonally additive on $E$.
	 	\end{itemize}
	 \end{proposition}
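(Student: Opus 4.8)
The plan is to prove the cyclic chain $(a)\Rightarrow(b)\Rightarrow(c)\Rightarrow(a)$, working throughout with the canonical decompositions $P=P_0+iP_1$ and $A=A_0+iA_1$ of $P$ and of its generating symmetric $m$-linear form, where $A_0,A_1$ are the complexifications of the symmetric real $m$-linear forms on $E_\R$ that generate $P_0,P_1$. Two ingredients drive the argument: the preceding Lemma, which says that $z=x+iy$ and $w=u+iv$ are disjoint in $E$ exactly when each of $x,y$ is disjoint from each of $u,v$; and the Bu--Buskes characterization recalled just before the statement, that a real $m$-homogeneous polynomial is orthogonally additive if and only if its generating symmetric $m$-linear form is orthosymmetric.

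The two easy implications are $(a)\Rightarrow(b)$ and $(c)\Rightarrow(a)$. For the first, if $x,x'\in E_\R$ with $x\perp x'$, then $x,x'$ are disjoint in $E$ (the Lemma with $y=v=0$), so $(a)$ gives $P(x+x')=P(x)+P(x')$; since for real arguments $P=P_0+iP_1$ with $P_0,P_1$ valued in $F_\R$ and this decomposition is unique, we get $P_0(x+x')=P_0(x)+P_0(x')$ and $P_1(x+x')=P_1(x)+P_1(x')$, i.e. $P$ is orthogonally additive on $E_\R$. For $(c)\Rightarrow(a)$, note that $P=P_0+iP_1$ as an identity of polynomials on $E$, so for disjoint $z,w\in E$ we obtain $P(z+w)=\bigl(P_0(z)+P_0(w)\bigr)+i\bigl(P_1(z)+P_1(w)\bigr)=P(z)+P(w)$ immediately.

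The substance is in $(b)\Rightarrow(c)$. Assuming $(b)$, the real polynomials $P_0,P_1$ are orthogonally additive on $E_\R$, so by \cite{BB} their generating forms $A_0,A_1$ on $E_\R^m$ are orthosymmetric. The key claim is that then the complexified form $A_0$ (and likewise $A_1$) vanishes on every $m$-tuple of elements of $E$ that contains two disjoint entries: if $z_k\perp z_l$ with $k\neq l$, then by the Lemma the real and imaginary parts of $z_k$ are disjoint from those of $z_l$, so in the expansion $A_0(z_1,\dots,z_m)=\sum_{\delta_1,\dots,\delta_m=0,1}i^{\sum_j\delta_j}A_0(x_1^{\delta_1},\dots,x_m^{\delta_m})$ (with $z_j=x_j^0+ix_j^1$) every summand has a pair of disjoint real arguments and hence vanishes by orthosymmetry. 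A binomial expansion then completes the proof: for disjoint $z,w\in E$,
$$P_0(z+w)=\sum_{k=0}^{m}\binom{m}{k}A_0\bigl(z^{k},w^{m-k}\bigr)=A_0(z^{m})+A_0(w^{m})=P_0(z)+P_0(w),$$
since every term with $1\le k\le m-1$ involves both $z$ and $w$ and so vanishes; the same argument applies to $P_1$, which is exactly $(c)$.

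I expect the only real obstacle to be the bookkeeping in $(b)\Rightarrow(c)$ --- propagating orthosymmetry over $E_\R$ through the complexification formula to deduce that $A_0$ and $A_1$ annihilate complex tuples with a disjoint pair --- but once the preceding Lemma is in hand this is routine. One point to watch throughout, already flagged in Section~2, is that $P=P_0+iP_1$ is used as an identity of polynomials on $E$ and not as the false assertion $P_0(z)=\re P(z)$ for complex $z$.
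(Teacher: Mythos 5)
Your proof is correct and follows essentially the same route as the paper: both reduce to showing $(b)\Rightarrow(c)$ via the disjointness lemma and the orthosymmetry of $A_0,A_1$ on $E_\R^m$, with all cross terms in an expansion of $P_0(z+w)$ vanishing. The only difference is organizational --- you isolate the intermediate claim that the complexified forms annihilate complex tuples containing a disjoint pair and then apply a clean binomial expansion over $\C$, whereas the paper carries out the complexification and binomial expansions in a single combined sum; the underlying computation is identical.
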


	 \begin{proof}
	 	The implications (a) implies (b) and (c) implies (a) are trivial.

	 	To show that (b) implies (c), suppose that $P=P_0+iP_1$ is orthogonally additive on $E_\R$.
	 	Then for $x,y \in E_\R$ with $x\perp y$, we have $P(x+y)=P(x) +P(y)$. Taking real and imaginary parts of $P$ and using (\ref{eq:RealIm}), we get
	 	$$
	 	P_0(x+y)=P_0(x) +P_0(y)\quad\mbox{and}\quad P_1(x+y)=P_1(x) +P_1(y).
	 	$$
	 	That is, $P_0$ and $P_1$ are orthogonally additive on $E_\R$.
	 	 
	 	To complete the proof, we must show that complexifications of $P_0$ and $P_1$ are orthogonally additive
	 	on $E$.
	 	Let $z=x+iy$, $w=u+iv$ be disjoint 
	 	elements of $E$. We have
	 	$$
	 		P_0(z+w) = \sum_{k=0}^m i^k\binom{m}{k}
	 		A_0(x+u)^{m-k}(y+v)^k\,,
	 	$$
 		where $A_0$ is the symmetric $m$-linear form
 		that generates $P_0$.  Expanding, we get
 		$$
 		P_0(z+w) = \sum_{k=0}^m i^k \binom{m}{k}
 		\sum_{r=0}^{m-k}\sum_{s=0}^k \binom{m-k}{r}
 		\binom{k}{s}A_0(x^{m-k-r}u^ry^{k-s}v^s) \,.
 		$$
 		Since $P_0$ is orthogonally additive on $E_\R$, the $m$-linear
 		form $A_0$ is orthosymmetric on $E_\R^m$.  Thus,
 		we have that 
 		$A_0(x^{m-k-r}u^ry^{k-s}v^s)$ vanishes if either
 		of $x,y$ appears along with either of $u,v$ in the
 		argument.  Therefore, for each $k$, there are only 
 		two non-zero terms, $A_0(x^{m-k}y^k)$
 		and $A_0(u^{m-k}v^k)$ and hence
 		$$
 		P_0(z+w) = P_0(z)+P_0(w)\,.
 		$$
A similar argument shows that
$P_1$ is orthogonally additive on $E$.    \end{proof}

 It follows from the above proposition 
 that the Banach lattice of orthogonally additive $m$-homogeneous polynomials on a complex Banach lattice $E$ is the complexification of the lattice of orthogonally additive $m$-homogeneous polynomials on $E_\R$.

A regular holomorphic function $f\colon E\to\mathbb{C}$ is said to be
orthogonally additive if $f(x+y)=f(x)+f(y)$ whenever $x$ and $y$ are disjoint.
Orthogonally additive holomorphic functions have previously been defined on
$C(K)$ spaces by Carando, Lassalle and Zalduendo, \cite{CLZ2} and on
$C^*$-algebras by Jaramillo, Prieto and Zalduendo, \cite{JPZ}  and Peralta and
Puglisi, \cite{PP}. The proof of \cite[Lemma~1.1]{CLZ2} for $C(K)$ spaces
trivially extends to arbitrary complex Banach lattices to show that if
$f\colon E\to \mathbb{C}$ is a regular holomorphic function which has a Taylor
series expansion
$f(z) =\sum_{k=0} P_ k(z)$ about $0$ then $f$ is orthogonally additive if and
only if $P_k$ orthogonally additive for each $k$ in $\mathbb{N}$. 

In \cite{BRSGeom} the authors proved that if $P$ is an $m$-homogeneous
orthogonally additive polynomial on a real Banach lattice then $\|P\|=\|P\|_r$
when $m$ is odd and $\|P\|\le \|P\|_r\le 2\|P\|$ when $m$ is even. Moreover,
the polynomial $P(x)=x_1^m-x_2^m$, with $m$ even, on $\ell_\infty^2$ shows that this bound is
sharp.

Let us now consider what happens for orthogonally additive polynomials on
complex Banach lattices.

\begin{proposition}\label{C(K)}
Let $P$	be an orthogonally additive
$m$-homogeneous polynomial
on the complex Banach lattice $C(K)$.
Then $\|P\|_r = \|P\|$.
\end{proposition}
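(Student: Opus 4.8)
The plan is to represent $P$ as an integral against a complex measure on $K$ and then compute both norms explicitly. Recall first the representation of a real orthogonally additive $m$-homogeneous polynomial on $C(K)$: there is a (real, signed) regular Borel measure $\nu$ on $K$ with $P(f)=\int_K f(t)^m\,d\nu(t)$ for all $f\in C(K,\R)$ (see \cite{CLZ2}; alternatively this follows by combining Fremlin's theorem \cite{Fremlin1}, as in the proof of Proposition~\ref{p:holder}, with the orthosymmetry characterisation of \cite{BB}, which forces the representing measure on $K^m$ to be carried by the diagonal). By Proposition~\ref{prop:oad} the complex orthogonally additive $m$-homogeneous polynomials on $C(K)$ form the complexification of the real ones, so $P=P_0+iP_1$ with $P_j(f)=\int_K f(t)^m\,d\nu_j(t)$, and hence
$$
P(z)=\int_K z(t)^m\,d\mu(t)\qquad(z\in C(K)),
$$
where $\mu=\nu_0+i\nu_1$ is a complex regular Borel measure on $K$. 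Since the inequality $|P(z)|\le|P|(|z|)$ established earlier gives $\|P\|\le\|P\|_r$ in general, it suffices to prove $\|P\|_r\le\|P\|$.

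Next I would identify the modulus $|P|$. The map $\nu\mapsto P_\nu$, with $P_\nu(f)=\int_K f^m\,d\nu$, is a linear bijection of the real regular Borel measures on $K$ onto the real orthogonally additive $m$-homogeneous polynomials on $C(K)$, and $P_\nu$ is positive precisely when $\nu\ge 0$; thus it is an order isomorphism, hence a vector lattice isomorphism, and so it preserves moduli. Complexifying (Proposition~\ref{prop:oad}) and using that the modulus of an element of $M(K)=M(K,\R)_\C$ is its total variation, we obtain $|P|=P_{|\mu|}$, i.e.\ $|P|(z)=\int_K z(t)^m\,d|\mu|(t)$, and in particular $|P|(g)=\int_K g^m\,d|\mu|$ for $g\in C(K)$ with $g\ge 0$. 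Therefore
$$
\|P\|_r=\big\||P|\big\|=\sup_{\|z\|_\infty\le 1}\Big|\int_K z(t)^m\,d|\mu|(t)\Big|=|\mu|(K),
$$
the last supremum being attained at $z\equiv 1$.

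It remains to show $\|P\|\ge|\mu|(K)$. Let $d\mu=h\,d|\mu|$ be the polar decomposition, so $|h|=1$ $|\mu|$-a.e., and fix a $|\mu|$-measurable function $h_0$ with $|h_0|=1$ and $h_0^m=\overline h$ (a measurable $m$-th root of the unimodular function $\overline h$). Fix $\eps>0$. Since $C(K)$ is dense in $L^1(|\mu|)$ (equivalently, by Lusin's theorem), there is $\tilde z\in C(K)$ with $\int_K|\tilde z-h_0|\,d|\mu|<\eps$; replacing $\tilde z$ by $r\circ\tilde z$, where $r\colon\C\to\overline{\mathbb{D}}$ is the nearest-point retraction (which is $1$-Lipschitz and fixes $\overline{\mathbb{D}}$), we may assume in addition $\|\tilde z\|_\infty\le 1$, at no cost to the estimate. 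Since $w\mapsto w^m$ is $m$-Lipschitz on $\overline{\mathbb{D}}$, it follows that $\int_K|\tilde z^{\,m}-\overline h|\,d|\mu|=\int_K|\tilde z^{\,m}-h_0^m|\,d|\mu|<m\eps$, and hence
$$
|P(\tilde z)|=\Big|\int_K \tilde z(t)^m h(t)\,d|\mu|(t)\Big|\ge|\mu|(K)-\int_K|\tilde z^{\,m}-\overline h|\,d|\mu|>|\mu|(K)-m\eps .
$$
As $\|\tilde z\|_\infty\le 1$ this gives $\|P\|\ge|\mu|(K)-m\eps$, and letting $\eps\to 0$ yields $\|P\|\ge|\mu|(K)=\|P\|_r$, which completes the argument. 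The step I expect to be the main obstacle is the approximation in the last paragraph: producing a norm-one \emph{continuous} function whose $m$-th power is $L^1(|\mu|)$-close to the merely measurable, unimodular function $\overline h$. This is where continuity of the points of $K$ and regularity of $\mu$ enter, via Lusin's theorem and the Lipschitz estimates for the retraction and for $w\mapsto w^m$; it is also the reason the identity holds over $\C$ (where unimodular measurable functions have measurable $m$-th roots of modulus one) while its real analogue only gives $\|P\|\le\|P\|_r\le 2\|P\|$.
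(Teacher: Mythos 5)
Your argument is correct, and it reaches the same two pillars as the paper's proof --- the representation $P(z)=\int_K z^m\,d\mu$ by a complex regular Borel measure and the identification $|P|=P_{|\mu|}$ via the lattice isomorphism $\nu\mapsto P_\nu$ and its complexification --- but it handles the decisive inequality $\|P\|_r\le\|P\|$ by a genuinely different mechanism. The paper extends $P$ to the bounded Borel functions $\mathcal{B}(K)\subset C(K)''$ using the norm-preserving, orthogonal-additivity-preserving canonical extension (citing \cite{ABext}, \cite{DG} and \cite{CLZ2006}), and then simply evaluates the extension $P_B$ at $x\rho^{1/m}$, where $\rho^{1/m}$ is a measurable branch of the $m$-th root of the Radon--Nikod\'ym density; the unimodular ``rotation'' is absorbed because $P_B$ is defined on all bounded Borel functions and $\|P_B\|=\|P\|$. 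You instead stay inside $C(K)$: you exhibit the measurable unimodular root $h_0$, approximate it in $L^1(|\mu|)$ by continuous functions of supremum norm at most one (Lusin's theorem plus the $1$-Lipschitz retraction onto $\overline{\mathbb{D}}$ and the $m$-Lipschitz estimate for $w\mapsto w^m$ there), and show $\|P\|\ge|\mu|(K)-m\eps$ directly, having first computed $\|P\|_r=|\mu|(K)$. Your route is more self-contained --- it needs only classical measure theory rather than the bidual extension machinery --- and it yields the slightly more explicit conclusion $\|P\|=\|P\|_r=|\mu|(K)$; the paper's route is shorter once the extension results are granted, and the same extension framework is reused elsewhere in that literature. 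Your closing remark correctly locates why the identity holds over $\C$ but only the factor-$2$ bound holds over $\R$: the existence of unimodular measurable $m$-th roots. Both proofs are sound; yours is an acceptable, arguably more elementary, alternative.
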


\begin{proof}
	
Using \cite{ABext} we know that every orthogonally 
additive $m$-homogeneous polynomial $P$  on $C(K)$ has an extension,
$\tilde P$, to $C(K)''$, the bidual of $C(K)$. Moreover, it follows from
\cite{DG} that $\|\tilde P\|=\|P\|$ and from \cite[Corollary~2.1]{CLZ2006} that
$\tilde P$ is orthogonally additive. We observe that $\mathcal{B}(K)$, the space
of all bounded Borel measurable functions on $K$ is a closed subspace of
$C(K)''$.
This means that each $m$-homogeneous orthogonally additive polynomial $P$ on
$C(K)$ has an extension $P_B$ to $\mathcal{B}(K)$ as an orthogonally additive
polynomial with $\|P_B\|=\|P\|$.

Since $P$ is orthogonally additive on $C(K)$, there is a complex measure
$\mu$ on $K$ such that
\begin{equation}\label{eq:ortcomplex}
P(x)=\int_K x^m(t)\, d\mu(t)
\end{equation}
for all $x$ in $C(K)$. We observe that (\ref{eq:ortcomplex}) is also valid in
	the real case. From \cite[Proposition~3]{BRSGeom} it now follows that $|P|$
	is represented by the measure $|\mu|$. The identification of $\mu$ with $P$
	gives us a Banach lattice
	isomorphism  from the the space of regular Borel signed measures on $K$ to the
	space of orthogonally additive $m$-homogeneous polynomials on the real Banach
	lattice $C(K)$, see \cite[Theorem~3]{BRSGeom}. As the complexification of a real lattice isomorphism is a complex lattice isomorphism, we see that $|P|$ is represented by the measure $|\mu|$ in
	the complex case as well.

It follows from
\cite[Corollary~2.1]{CLZ2006}
that
$$
P_B(y)={\tilde P}(y)=\int_K y^m(t)\, d\mu(t)
$$
for all bounded Borel measurable functions $y$ on $K$.

By \cite[Theorem~6.12]{Rudin}, there exists a Borel measurable function
$\rho\colon K\to \mathbb{C}$ with $|\rho(t)|=1$ for all $t$ in $K$ such that
$d|\mu|=\rho d\mu$.

Let us choose a branch $\rho^{1/m}$ of the $m$-th root of $\rho$. For each $x$
in $C(K)$, we note that $x\rho^{1/m}$ is a bounded Borel function on $K$.
Therefore, given an orthogonally additive $m$-homogeneous polynomial $P$ on
$C(K)$ we have
\begin{align*}
  |P|(x)&=\int_Kx(t)^m\,d|\mu|(t)\\
      &=\int_Kx(t)^m\rho(t)\,d\mu(t)\\
      &=\int_K(x\rho^{1/m}(t))^m\,d\mu(t)\\
     &=P_B(x\rho^{1/m})     .
\end{align*}

Thus, we have
\begin{align*}
  \|P\|_r&=\sup_{\|x\|\le 1, x\in C(K)}||P|(x)|\\
         &=\sup_{\|x\|\le 1,x\in C(K)}|{P_B}(x\rho^{1/m})|\\
         &\le\sup_{\|y\|\le 1, y\in B(K)}|{P_B}(y)|\\
         &=\|P\|.
\end{align*}
As $\|P\|\le \|P\|_r$ 
in general, we have $\|P\|=\|P\|_r$.

\end{proof}

We can extend this result to all Banach lattices by
localising to principal ideals.
If $E$  is a real or complex  Banach lattice, then for $a\in E_+$,
the principal ideal $E_a$ is the ideal in $E$ consisting of all $x$ 
satisfying $|x|\le c\,a$ for some $c>0$. With the 
norm
$$
\|x\|_a = \inf\{c: |x|\le c\,a\}\,,
$$
it becomes a Banach lattice.  
As it
is an AM-space with unit, the Kakutani Representation
Theorem says that $E_a$  is Banach lattice
isometrically isomorphic to $C(K_a)$ for some 
compact Hausdorff space $K_a$. It is easy to see
that, in the complex case, $E_a$ is the Banach
lattice complexification of the real
Banach lattice $(E_\R)_a$.

The norm of a real or complex Banach lattice $E$ is determined by the norms
of its principal ideals.  For every $z\in E$, we have
$$
\|z\| = \inf\{\|a\|\,\|z
\|_a: a\in E, a>0  \}
$$
and this infimum is attained when
$a=|z|$. 
It follows that the closed
unit ball of $E$ is
\begin{equation}\label{ball}
	B_E = \bigcup_{a>0} \|a\|^{-1}\,B_{E_a}
\end{equation}
and so the 
norm of a regular $m$-homogeneous polynomial
$P$ is given by
$$
\|P\|= 
\sup\{ \|a\|^{-m} \|P_a\| : a\in E, a>0 \}\,,
$$
where $P_a$ denotes the restriction of $P$
to the Banach lattice $E_a$, equipped with the principal ideal norm.
Applying this to the regular norm, we have
$$
\|P\|_r =
\sup\{ \|a\|^{-m} \||P|_a\| : a\in E, a>0 \}\,.
$$
In the real case, the restriction mapping
$P\in \rhpolys{m}{E}  \mapsto P_a \in \rhpolys{m}{E_a}$
is a lattice homomorphism \cite{BRSNakano}.
It is easy to see that, in the complex case,
the restriction mapping is the complexification
of the real restriction mapping.  Therefore
the restriction mapping is a complex
lattice homomorphism \cite[p.~136]{Sch}.
So we have
$|P|_a = |P_a|$ and it follows that
\begin{align*}
	\|P\|_r &=
	\sup\{ \|a\|^{-m} \||P|_a\| : a\in E, a>0 \}
	= \sup\{ \|a\|^{-m} \||P_a|\| : a\in E, a>0 \}\\
	&=  \sup\{ \|a\|^{-m} \|P_a\|_r : a\in E, a>0 \}\,.
\end{align*}
Thus, from Proposition~\ref{C(K)} we obtain the following result.

\begin{theorem}\label{regnormeq}
  Let $P$ be an orthogonally additive $m$-homogeneous polynomial on a complex
  Banach lattice $E$. Then $\|P\|=\|P\|_r$.
\end{theorem}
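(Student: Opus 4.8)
The plan is to reduce everything to the $C(K)$ case already settled in Proposition~\ref{C(K)}, using the localisation to principal ideals set up immediately before the statement. For a positive element $a\in E$, let $P_a$ denote the restriction of $P$ to the principal ideal $E_a$. The first step is to verify that $P_a$ again satisfies the hypotheses of Proposition~\ref{C(K)}: since $E_a$ is an ideal in $E$, any two elements of $E_a$ that are disjoint in $E_a$ are disjoint in $E$, so $P(x+y)=P(x)+P(y)$ whenever $x\perp y$ in $E_a$; that is, $P_a$ is an orthogonally additive $m$-homogeneous polynomial on $E_a$. Moreover $P_a$ is regular, because (as recalled just above) the restriction map $\rhpolys{m}{E}\to\rhpolys{m}{E_a}$ is a complex lattice homomorphism, and in particular $|P|_a=|P_a|$, so that $\|P_a\|_r=\|\,|P_a|\,\|$ is genuinely the regular norm of the restricted polynomial.

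The second step is to assemble the norm identities. By the Kakutani representation theorem $E_a$ is isometrically lattice isomorphic to $C(K_a)$ for some compact Hausdorff space $K_a$, and in the complex case it is the complexification of $(E_\R)_a$, so Proposition~\ref{C(K)} applies to $P_a$ and yields $\|P_a\|_r=\|P_a\|$ for every $a>0$. Combining this with the two formulas established immediately before the statement,
\[
\|P\|_r=\sup_{a>0}\|a\|^{-m}\,\|P_a\|_r=\sup_{a>0}\|a\|^{-m}\,\|P_a\|=\|P\|,
\]
which is exactly the claim. (The inequality $\|P\|\le\|P\|_r$ is in any case automatic, since the regular norm dominates the supremum norm, so only the reverse inequality needs this argument.)

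Since Proposition~\ref{C(K)} may be assumed, there is no serious obstacle remaining; the proof is essentially a bookkeeping exercise. The two points that do require a moment's care are the preservation of orthogonal additivity under passing to the ideal $E_a$ and the identity $|P_a|=|P|_a$ needed to identify $\|P_a\|_r$ with the regular norm of the restriction — both follow at once from $E_a$ being an ideal and from the restriction map being a lattice homomorphism, and both are already implicit in the discussion preceding the theorem.
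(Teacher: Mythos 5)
Your proposal is correct and follows essentially the same route as the paper: localise to the principal ideals $E_a\cong C(K_a)$, apply Proposition~\ref{C(K)} to each restriction $P_a$, and combine with the identities $\|P\|=\sup_{a>0}\|a\|^{-m}\|P_a\|$ and $\|P\|_r=\sup_{a>0}\|a\|^{-m}\|P_a\|_r$. The only difference is that you make explicit the (easy) verification that $P_a$ remains orthogonally additive on the ideal $E_a$, which the paper leaves implicit.
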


In a similar way we can show that if $m$ is an odd integer and $P$ is an
$m$-homogeneous polynomial on a (real) Banach lattice $E$ then $\|P\|=\|P\|_r$,
recovering the first part of \cite[Corollary~1]{BRSGeom}.

From Theorem \ref{regnormeq}
we immediately obtain the 
following result.

\begin{theorem}\label{orthotheo}
  Let $f$ be an orthogonally additive holomorphic function on a complex Banach
  lattice $E$. Then for each $a\in E$ we have
	$$
	|r|(f,a)=r(f,a).
	$$
\end{theorem}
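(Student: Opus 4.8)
The plan is to derive the theorem directly from Theorem~\ref{regnormeq}. First I would fix $a\in E$ and write the Taylor series of $f$ about $a$ as $f(a+z)=\sum_m P_m(z)$ with $P_m=\mdiff m f(a)\in\rhpolys{m}{E}$; the two radii in question are then $r(f,a)=\bigl(\limsup_m\|P_m\|^{1/m}\bigr)^{-1}$ and $|r|(f,a)=\bigl(\limsup_m\|P_m\|_r^{1/m}\bigr)^{-1}$. So the whole statement reduces to the equality $\|P_m\|_r=\|P_m\|$ for every $m$, and by Theorem~\ref{regnormeq} this holds as soon as each $P_m$ is orthogonally additive on $E$. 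Hence the real content is the claim that the Taylor coefficients of $f$ at $a$ are orthogonally additive polynomials.

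To establish that claim I would show $P_m(v+w)=P_m(v)+P_m(w)$ for an arbitrary disjoint pair $v,w\in E$ by localising to the principal ideal $E_c$ with $c=|a|\vee|v|\vee|w|$. Since $E_c$ is, as remarked before Theorem~\ref{regnormeq}, the complexification of $(E_\R)_c$, it is lattice isometric to some $C(K_c)$; it contains $a$, $v$ and $w$, and $f$ restricts to it as a regularly holomorphic, orthogonally additive function whose $m$-th Taylor coefficient at $a$ is $P_m|_{E_c}$. Then I would invoke the structure of orthogonally additive holomorphic functions on $C(K)$ that already underlies Proposition~\ref{C(K)} --- the extension of \cite[Lemma~1.1]{CLZ2} together with the representation of orthogonally additive homogeneous polynomials on $C(K)$ by measures \cite{CLZ2006,BRSGeom} --- now applied around the point $a$ rather than around a single homogeneous polynomial, to conclude that every Taylor coefficient of $f|_{E_c}$, and in particular $P_m|_{E_c}$, is an orthogonally additive polynomial on $E_c$. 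This gives $P_m(v+w)=P_m(v)+P_m(w)$, and since $v,w$ were arbitrary, $P_m$ is orthogonally additive on $E$; feeding this back into the first step finishes the proof.

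The main obstacle is precisely this localisation step. What is available directly is that the Taylor coefficients \emph{at the origin} of an orthogonally additive regularly holomorphic function are orthogonally additive, and the usual proof of that (scaling the two disjoint increments by a common parameter and comparing homogeneous parts) is tied to the centre $0$. Upgrading it to an arbitrary centre $a$ is where one really needs to pass to the $C(K)$-setting and use the pointwise integral representation $x\mapsto\int_{K_c}G(t,x(t))\,d\nu(t)$ of orthogonally additive holomorphic functions there: differentiating this $m$ times at $a$ keeps every homogeneous component of the form $z\mapsto\int_{K_c}z(t)^m\,d\nu_m(t)$, which is orthogonally additive. Once that is granted, the passage through Theorem~\ref{regnormeq} and the formulas for $r(f,a)$ and $|r|(f,a)$ is immediate.
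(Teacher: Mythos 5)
Your proposal is correct and follows the same overall strategy as the paper: the paper's own proof is literally the one-line remark that the theorem is ``immediate'' from Theorem~\ref{regnormeq}, via the Cauchy--Hadamard formulas for $r(f,a)$ and $|r|(f,a)$. What you add --- and what the paper leaves implicit --- is the justification that the Taylor coefficients of $f$ at an \emph{arbitrary} centre $a$ are orthogonally additive, since the extension of \cite[Lemma~1.1]{CLZ2} quoted in the paper only addresses the expansion about the origin. You are right that this is the one genuinely nontrivial point, and your localisation to $E_c$ with $c=|a|\vee|v|\vee|w|$ followed by the $C(K)$ integral representation does close it. A slightly more elementary route, avoiding the representation theorem, is available: writing $f=\sum_k \widehat{A_k}$ about $0$ with each $A_k$ orthosymmetric (orthosymmetry of $A_k$ being equivalent to orthogonal additivity of $\widehat{A_k}$, as recalled before Proposition~\ref{prop:oad}), the re-expansion $\mdiff{m}{f}(a)(z)=\sum_{k\ge m}\binom{k}{m}A_k(a^{k-m},z^m)$ shows directly that for $v\perp w$ all mixed terms $A_k(a^{k-m},v^j,w^{m-j})$ with $0<j<m$ vanish, so each coefficient at $a$ is orthogonally additive on all of $E$ without passing to a principal ideal. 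Either way, feeding the conclusion into Theorem~\ref{regnormeq} gives $\|P_m\|=\|P_m\|_r$ for every $m$ and hence the equality of the two radii.
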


This contrasts with the results of Section 6,
where we have seen that the radii of convergence can be different in general.

\bibliographystyle{amsplain}
\bibliography{Regular}

\noindent Christopher Boyd, School of Mathematics \& Statistics, University
College Dublin, \hfil\break
Belfield, Dublin 4, Ireland.\\
e-mail: christopher.boyd@ucd.ie

\medskip

\noindent Raymond A. Ryan, School of Mathematical and Statistical Sciences, University of Galway, Ireland.\\
e-mail: ray.ryan@universityofgalway.ie

\medskip

\noindent Nina Snigireva, School of Mathematical and Statistical Sciences, University of Galway, Ireland. 
\\
e-mail: nina.snigireva@universityofgalway.ie \\[3pt]

\end{document}